\newbox\removebox
\newcommand\remove[1]{%
\setbox\removebox=\ifmmode\hbox{$#1$}\else\hbox{#1}\fi%
\leavevmode
\rlap{\textcolor{blue}{\vrule height0.8ex depth-0.6ex width\wd\removebox}}%
\box\removebox
}
\long\def\bigremove#1{%
\par\setbox\removebox=\vbox{#1}%
\vbox{%
\vbox to0pt{\hbox{\tikz\draw[color=blue,thick] (0,0) -- (\wd\removebox,-\ht\removebox)  (\wd\removebox,0) -- (0,-\ht\removebox);}}
\box\removebox
}
}
\newcommand{\cCexp}{\cC^{\mathrm{exp}}}
\newcommand{\cQexp}{\cQ^{\mathrm{exp}}}
\newcommand{\AO}[1][]{\cA_{\cO#1}}
\newcommand{\BO}[1][]{\cB_{\cO#1}}
\newcommand{\CO}[1][]{\mathcal{C}_{\cO#1}}
\newcommand{\Vol}{\operatorname{Vol}}
\newcommand{\Int}{\operatorname{Int}}
\newcommand{\Bdd}{\operatorname{Bdd}}
\newcommand{\Iva}{\operatorname{Iva}}
\def\RDefe{Q^{\rm exp}}
\def\ac{{\overline{\rm ac}}}
\def\Def{\operatorname{Def}}
\def\RDef{\operatorname{RDef}}
\def\LPas{\cL_{\rm DP}}
\def\res{\operatorname{res}}
\def\11{{\mathbf 1}}
\def\AA{{\mathbb A}}
\def\CC{{\mathbb C}}
\def\FF{{\mathbb F}}
\def\LL{{\mathbb L}}
\def\NN{{\mathbb N}}
\def\QQ{{\mathbb Q}}
\def\RR{{\mathbb R}}
\def\ZZ{{\mathbb Z}}
\def\cA{{\mathcal A}}
\def\cB{{\mathcal B}}
\def\cC{{\mathscr C}}
\def\cD{{\mathcal D}}
\def\cH{{\mathcal H}}
\def\cL{{\mathcal L}}
\def\cM{{\mathcal M}}
\def\cO{{\mathcal O}}
\def\cP{{\mathcal P}}
\def\cQ{{\mathcal Q}}
\def\llp{\mathopen{(\!(}}
\def\rrp{\mathopen{)\!)}}
\newtheorem{thm}[subsubsection]{Theorem}
\newtheorem{lem}[subsubsection]{Lemma}
\newtheorem{cor}[subsubsection]{Corollary}
\newtheorem{prop}[subsubsection]{Proposition}
\theoremstyle{definition}
\newtheorem{defn}[subsubsection]{Definition}
\newtheorem{def-prop}[subsubsection]{Proposition-Definition}
\newtheorem{def-theorem}[subsubsection]{Theorem-Definition}
\newtheorem{def-lem}[subsubsection]{Lemma-Definition}
\theoremstyle{remark}
\newtheorem{remark}[subsubsection]{Remark}
\theoremstyle{plain}
\numberwithin{equation}{subsection}
\newcommand{\sq}{\mathrel{\square}}
\newcommand{\ord}{\operatorname{ord}}
\newcommand{\statement}[1]{\\\hspace*{\fill}#1\hspace*{\fill}\\}
\begin{document}

\setcounter{tocdepth}{1} 

\author{Raf Cluckers}
\address{Universit\'e Lille 1, Laboratoire Painlev\'e, CNRS - UMR 8524, Cit\'e Scientifique, 59655
Villeneuve d'Ascq Cedex, France, and,
KU Leuven, Department of Mathematics,
Celestijnenlaan 200B, B-3001 Leu\-ven, Bel\-gium}
\email{Raf.Cluckers@math.univ-lille1.fr}
\urladdr{http://math.univ-lille1.fr/$\sim$cluckers}

\author{Julia Gordon}
\address{Department of Mathematics, University of British Columbia,
Vancouver BC V6T 1Z2 Canada}
\email{gor@math.ubc.ca}
\urladdr{http://www.math.ubc.ca/$\sim$gor}

\author{Immanuel Halupczok}
\address{Institut f\"ur Mathematische Logik und Grundlagenforschung
Universit\"at M\"unster\\
Einsteinstra\ss{}e 62\\
48149 M\"unster\\
Germany}
\email{math@karimmi.de}
\urladdr{http://www.immi.karimmi.de/en.math.html}

\subjclass[2000]{Primary 14E18; Secondary 22E50, 40J99}

\keywords{Transfer principles for motivic integrals, $L^1$-integrability, motivic integration, Harish-Chandra characters, motivic exponential functions}

\title[Transfer principles for integrability and boundedness]
{Integrability of
oscillatory functions on local fields: transfer principles}

\begin{abstract}
For oscillatory functions on local fields coming from motivic exponential functions, we show that integrability over ${\mathbb Q}_p^n$ implies integrability over ${\mathbb F}_p \mathopen{(\!(} t \mathopen{)\!)} ^n$ for large $p$, and vice versa. More generally, the integrability only depends on the isomorphism class of the residue field of the local field, once the characteristic of the residue field is large enough.
This principle yields general local integrability results for Harish-Chandra characters in positive characteristic as we show in other work.
Transfer principles for related conditions such as boundedness and local integrability are also obtained.
The proofs 
rely on a thorough study of loci of integrability, to which we give a geometric meaning 
by relating them to zero loci of functions of a specific kind.
\end{abstract}


\maketitle


\section{Introduction}

Integrability conditions of oscillatory functions are often important but difficult
to control. The idea to relate integrability conditions over $\FF_p\llp t \rrp^n$ to integrability over $\QQ_p^n$ may sound tricky, the more so since integrability is not a first order property (in any seemingly natural language), and thus plays at a different level than, say, the classical Ax-Kochen principle \cite{AK2}. In order to relate integrability conditions over different local fields one must first relate oscillatory functions between the corresponding fields. To this end, we use the motivic exponential functions of \cite{CLexp}, which specialize naturally to oscillatory functions on non-archimedean local fields.
Such motivic exponential functions play an increasingly important role in representation theory, for example, they played a crucial role in one of the methods for obtaining the Fundamental Lemma of the Langlands program and many of its variants in the characteristic zero case, see  \cite{CHL} and the appendix of \cite{YGordon}.




The main result of this paper is the transfer principle that relates $L^1$-integrability over $\FF_p\llp t \rrp^n$ to $L^1$-integrability over $\QQ_p^n$ for specializations of motivic exponential functions, and similarly for any pair of local fields with isomorphic residue fields.
The first application is the proof of the local integrability of Harish-Chandra characters in positive (large) characteristic, that we give in \cite{CGH2}.

The proofs involve a precise understanding
of what we call ``loci of integrability'':
for a family of functions $f_x$ with parameter $x$, the locus of integrability is the set of those parameters $x$ for which the function $f_x$ is integrable (see Definition \ref{loci}).
Integrability being a subtle and analytic condition, it is surprising that we can relate it back to geometry, that is, we relate loci of integrability to zero loci of functions of specific kinds. This geometric viewpoint on loci of integrability is key to proving our new transfer principles.


Let us explain basic versions of the transfer principles for a simple kind of motivic exponential functions.
Consider a formal expression $F$ of the form
$$
\sum_{i}|f_i|\cdot E(h_i), 
$$
where $E$ is a formal symbol, and where the $f_i$  and $h_i$ are (first order) definable functions in the language of valued fields with the same domain $X$ and taking values in the valued field.
For any non-archimedean local field $K$ 
and for any non-trivial additive character $\psi:K\to\CC^\times$ the expression $F$ specializes to the oscillatory function
$$
F_{K,\psi}:X_K \to\CC: x\mapsto \sum_{i}|f_{iK}(x)|\psi(h_{iK}(x)), 
$$
where $|\cdot|$ is the norm on $K$, $X_K$ is the subset of $K^n$ for some $n$ obtained by interpreting the formula defining $X$ in $K$,
and where the $f_{iK}:X_K\subset K^n\to K$ are the interpretations in $K$ of the $f_i$ and similarly for the $h_i$. There are many ways to endow $X_K$ with a measure, a basic example being the restriction of the Haar measure on $K^n$ to $X_K$ in the case when $X_K$ is open in $K^n$.
We can now state a basic form of one of our main results.
\begin{quote}\textbf{Transfer principle for integrability (basic form). }
As soon as the residue field $k_K$ of $K$ has sufficiently large characteristic, whether the statement
\statement{
$F_{K,\psi}$ is $L^1$-integrable over $X_K$ for each $\psi$
}
holds or not, depends only on (the isomorphism class of) $k_K$.
\end{quote}

For applications one typically needs the family version of this transfer principle as given by Theorem \ref{mtrel}.
Variants with conditions like boundedness, local integrability, and local boundedness are also obtained, see Theorem \ref{mtbound}.
Thanks to the ubiquity of motivic exponential functions, the results apply to a large class of functions arising in representation theory, such as the
functions representing the Fourier transforms of orbital integrals. In particular our results lead to the first general proof of the local integrability of Harish-Chandra characters in large characteristics \cite{CGH2}, generalizing the results of e.g.~\cite{lemaire:gln}.
While the work on the Fundamental Lemma in characteristic zero of \cite{CHL} and the appendix of \cite{YGordon} combine insights about motivic exponential functions with results in positive characteristics of \cite{Ngo} and \cite{YGordon}, the results of \cite{CGH2} combine the theorems of Harish-Chandra \cite{Harish} in characteristic zero with Theorem \ref{mtrel} of this paper.
Waldspurger \cite{walds} has given an alternative approach for some results of \cite{CHL}, but it remains to be shown that techniques of e.g.~\cite{walds} may also recover some results on integrability of \cite{CGH2}.

Let us begin with the general definitions of loci of integrability, of boundedness, and of identical vanishing.

For arbitrary sets $A\subset X\times T$ and $x\in X$, write $A_x$ for the set of $t\in T$ with $(x,t)\in A$.
For $g:A\subset X\times T\to B$ a function
and for $x\in X$, write $g(x,\cdot)$ for the function $A_x\to B$ sending $t$  to $g(x,t)$.

Let $T$ and $X$ be arbitrary sets, and let $f:X\times T\to \CC$ be a function.
\begin{defn}\label{loci}
Define the \emph{locus of boundedness of $f$ in $X$} as the set
$$
\Bdd(f,X) := \{x\in X\mid f(x,\cdot) \mbox{ is bounded on }T\}.
$$
Define the \emph{locus of identical vanishing of $f$ in $X$} as the set
$$
\Iva(f,X) := \{x\in X\mid f(x,\cdot) \mbox{ is identically zero on }T\}.
$$
If moreover $T$ is equipped with a complete measure, we define the \emph{locus of integrability of $f$ in $X$} as the set
$$
\Int(f,X) := \{x\in X\mid f(x,\cdot) \mbox{ is measurable and integrable over }T\}.
$$
\end{defn}

We prove that loci of integrability for functions $f_K$ coming from a motivic exponential function $f$ on $X\times K^n$ are actually zero loci of functions coming from a motivic exponential function on $X$, and similarly for loci of boundedness and identical vanishing, see Theorem \ref{p2pexpmot}.

In fact, we develop all our results gradually: first in the case of summation over the integers in Section \ref{s:pres}, then for $p$-adic integration in Section \ref{sec:qfixp}, and finally for motivic integration in Section \ref{s:mot}.
We also sharpen the results of \cite{CLoes} and \cite{CLexp} about stability under integration, see e.g.~Theorems \ref{p1pexp} and \ref{p1qmotexp}, and we give general interpolation results of given functions by integrable functions, see e.g.~Theorems \ref{p1qmot} and \ref{p2'mot}. It is the precise study of these loci that allows us to prove our new transfer principles.
The biggest challenges that we had to address were of course related to oscillation, which appears from Section \ref{osc} on.
An important ingredient to control oscillation is the technical
Proposition \ref{intCLpexp}, which, in a certain sense, states that oscillation
can not interact too badly with definable conditions.

\subsection{Conventions}\label{sec:loci}

For a function $f:A\to \CC$, we write $Z(f)$ for the zero locus $\{a\in A\mid f(a)=0\}$ of $f$, and similarly for an $R$-valued function $f:A\to R$ for any ring $R$.

For sets $A_1, A_2, X$ and functions $f_i:A_i \to X$, a function $g:A_1\to A_2$ is said to be over $X$ when $f_2\circ g= f_1$.
Often, the $f_i$ will be coordinate projections to $X$.

Definition \ref{loci} will typically be applied to the counting measure on $\ZZ$,
to the Haar measure on $\QQ_p$ normalized such that $\ZZ_p$ has
measure $1$, and to product measures on Cartesian products of these sets.
Our results also apply mutatis mutandis to measures on varieties $V$ over $\cO_K$ by working with affine charts and volume forms, see \cite{CLoes}, \cite{CLbounded}.

Recall that a complex valued functions is called bounded if and only if its range is contained in a compact subset of $\CC$.

\subsection*{Acknowledgments}
\hspace{0.5cm}
The authors are deeply indebted to T.~Hales, without whose influence the present paper would not have been written in the present form.
We would also like to thank F.~Loeser and J.~Denef for their deep insights in the subject and interesting discussions during the preparation of this paper, and the referee for several useful comments. The authors were supported in part by the European Research Council under the European Community's Seventh Framework Programme (FP7/2007-2013) with ERC Grant Agreement nr. 246903 NMNAG,
by the Labex CEMPI  (ANR-11-LABX-0007-01), by the the Fund for Scientific Research of Flanders, Belgium (grant G.0415.10),
by the SFB~878 of the Deutsche Forschungsgemeinschaft, and by NSERC.

\section{Summability over the integers}\label{s:pres}

Summation over the integers and the results presented in this section are important for us since they lie behind $p$-adic integration: several of the $p$-adic results of Section \ref{sec:qfixp}, and even some of the motivic results of Section \ref{s:mot}, will be reduced to the results of this section.

\subsection{Presburger with base $q$}\label{sec:qfix}
In this section, let $q>1$ be a fixed real number.

By a Presburger set, one means a subset of $\ZZ^m$ for some $m\geq 0$ which can be described by a Boolean combination of sets of the following forms
$$
\{x\in\ZZ^m\mid f(x)\geq 0\}
$$
$$
\{x\in\ZZ^m\mid g(x)\equiv 0\bmod n\},
$$
where $f$ and $g$ are polynomials over $\ZZ$ of degree $\leq 1$, and $n>0$ is an integer.
A Presburger function is a function between Presburger sets whose graph is also a Presburger set.
A Presburger function is called linear if it is the restriction of an affine map $\QQ^k\to\QQ^\ell$.
We write $\NN$ for the set of non-negative integers $\{z\in\ZZ\mid z\geq 0\}$.

\begin{defn}
Define the subring $\AA_q\subset\RR$ as
$$
\AA_q =  \ZZ\left[q,q^{-1},\left(\frac{1}{1-q^{-i}}\right)_{i\in \NN,\ 0<i}\right].
$$
For $S$ a Presburger set, let $\cP_q(S)$ be the $\AA_q$-algebra of $\AA_q$-valued functions on $S$ generated by
 \begin{enumerate}
\item all Presburger functions $\alpha:S\to\ZZ$,

\item the functions $q^\beta:S\to \AA_q:s\mapsto q^{\beta(s)}$ for all Presburger functions $\beta:S\to\ZZ$.
\end{enumerate}
\end{defn}

The functions in $\cP_q(S)$ are called Presburger constructible functions on $S$ (with base $q$).
Note that a general function in $\cP_q(S)$ is of the form
$$
s\mapsto \sum_{i=1}^N a_i q^{\beta_i(s)}\prod_{j=1}^{M_i}\alpha_{ij}(s),
$$
with the $\alpha_{ij}$ and $\beta_i$ Presburger functions $S\to\ZZ$, and the $a_i$ elements of $\AA_q$.
The constants $\frac1{1-q^{-i}}$ are needed in $\AA_q$ to make the
framework closed under summation; see Theorem~\ref{p1q-new}.

By the quantifier elimination results of \cite{Presburger}, the image of a Presburger set under a Presburger function is again a Presburger set, as are finite intersections, finite unions, and complements of Presburger sets. The situation for zero loci of Presburger \emph{constructible} functions is much more delicate. For finite unions and finite intersections there is no difficulty, as follows.
\begin{lem}\label{inters}
Let $S$ be a Presburger set and let $h_i$ be in $\cP_q(S)$ for $i=1,\ldots,N$.
Consider zero loci
$$
Z(h_i) = \{s\in S\mid h_i(s)=0\}.
$$
Then there exist $f$ and $g$ in $\cP_q(S)$ such that
$$
Z(f) = \bigcap_{i=1}^{N} Z(h_i)\ \mbox{ and }\ Z(g)= \bigcup_{i=1}^{N} Z(h_i).
$$
\end{lem}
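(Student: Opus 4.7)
The plan is to exploit two elementary facts: $\cP_q(S)$ is by definition an $\AA_q$-algebra of real-valued functions, and hence closed under finite sums and finite products; and $\AA_q$ sits inside $\RR$, which is an ordered integral domain.

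For the intersection, I would set
\[
f := \sum_{i=1}^{N} h_i^{2}.
\]
Since each $h_i \in \cP_q(S)$ and $\cP_q(S)$ is a ring, we have $f \in \cP_q(S)$. For any $s \in S$, the value $f(s)$ is a sum of non-negative real numbers, so $f(s)=0$ iff $h_i(s)=0$ for every $i$, which is exactly the condition $s \in \bigcap_{i} Z(h_i)$.

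For the union, I would set
\[
g := \prod_{i=1}^{N} h_i,
\]
which again lies in $\cP_q(S)$ because it is closed under products. Since the values lie in $\RR$, which has no zero divisors, $g(s)=0$ iff $h_i(s)=0$ for at least one $i$, i.e.\ $s \in \bigcup_i Z(h_i)$.

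There is no real obstacle here: the only thing to verify is that the constructions $\sum h_i^2$ and $\prod h_i$ actually remain inside $\cP_q(S)$, which is immediate from the algebra structure, and that the pointwise zero-set characterization works, which uses only that $\AA_q \subset \RR$. The lemma is essentially an observation that finite intersections and unions of zero loci in a ring of real-valued functions are themselves zero loci; the genuine difficulty, as the paper emphasizes, will arise later when one tries to handle operations more subtle than finite unions/intersections.
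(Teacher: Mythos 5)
Your proof is correct and matches the paper's proof exactly: the paper likewise takes $f=\sum_i h_i^2$ and $g=\prod_i h_i$, relying on the same two facts that $\cP_q(S)$ is a ring of real-valued functions and that $\RR$ is an ordered integral domain. Your write-up simply spells out the one-line argument the paper leaves implicit.
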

\begin{proof}
One can just take the sum of the squares, resp.~the product, of the $h_i$.
\end{proof}
Note that any Presburger subset $A$ of $S$ appears as the zero locus of a Presburger constructible function on $S$;
indeed, $A=Z(1-\chi_A)$ with $\chi_A$ the characteristic function of $A$ in $S$. However, the converse is not true: there are Presburger constructible functions whose zero locus is not a Presburger set. Moreover, for $h\in \cP_q(S)$, the complement of $Z(h)$
in $S$ is not always equal to the zero locus of some function in $\cP_q(S)$.
It turns out that zero loci of Presburger constructible functions are closely related to loci of integrability, of boundedness, and of identical vanishing.
Indeed, the zero loci of Presburger constructible functions are exactly the sets that arise as loci of integrability (against the counting measure) of Presburger constructible functions, and similarly for the loci of boundedness and of identical vanishing.

\begin{thm}[Correspondences of loci]
\label{p2}
Let $f$ be in $\cP_q(S\times \ZZ^m)$ for some Presburger set $S$ and some $m\geq 0$.
Then there exist $h_1,h_2$ and $h_3$ in $\cP_q(S)$ such that
\begin{equation}\label{p2eq}
\Int(f,S) = Z(h_1), 
\end{equation}
\begin{equation}\label{p2b}
\Bdd(f,S) = Z(h_2), 
\end{equation}
and
\begin{equation}\label{p3}
\Iva(f,S) = Z(h_3), 
\end{equation}
where integrability in (\ref{p2eq}) is with respect to the counting measure on $\ZZ^m$, and where $Z(\cdot)$ stands for the zero locus.
\end{thm}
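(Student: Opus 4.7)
The plan is to reduce the three statements to a single structural result via a Presburger cell decomposition of $f$ with respect to the integer variables $z \in \ZZ^m$, and then to read off explicit Presburger constructible descriptions of each locus cell by cell, gluing at the end with Lemma~\ref{inters}.

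First I would fix a presentation $f(s,z) = \sum_{i=1}^N a_i\, q^{\beta_i(s,z)} \prod_{j=1}^{M_i} \alpha_{ij}(s,z)$ with $\beta_i, \alpha_{ij}$ Presburger. Using Presburger quantifier elimination and cell decomposition, I can partition $S \times \ZZ^m$ into finitely many cells $C_k$ such that on each $C_k$, after a linear Presburger change of coordinates over $S$, the fibre $C_{k,s}$ is an explicit product-like domain (a bounded box times, possibly, a copy of $\NN^r$ with $r \leq m$), on which each $\beta_i$ restricts to a linear function $\sum_\ell b_{i,\ell}(s) z_\ell + c_i(s)$, and each $\alpha_{ij}$ restricts to such a linear function as well. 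The piece of $f$ supported in $C_k$ is then a finite sum of products of polynomials in the $z_\ell$ with purely exponential factors $q^{b_{i,\ell}(s) z_\ell}$, all with Presburger-definable coefficients in $s$. On each such cell, bounded directions contribute a finite sum that is automatically bounded and integrable in $z$, while each unbounded direction is a ray on which the behaviour is governed by the signs of the $b_{i,\ell}(s)$.

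Next, I would verify the three conditions cell by cell. For $\Iva$: vanishing of $f(s, \cdot)$ on $C_{k,s}$ amounts, after collecting like exponentials and using $\ZZ$-linear independence of the exponents generically, to a finite list of Presburger equalities between the coefficient functions in $s$ (with further case splits recorded by the cell decomposition). Each such equality is a Presburger subset of $S$ and hence the zero locus of a Presburger constructible function (via its characteristic function), so $\Iva(f,S) \cap (\text{cell projection})$ is such a zero locus. For $\Bdd$: on an unbounded ray in the $z_\ell$-direction, boundedness forces $b_{i,\ell}(s) \leq 0$ for each term surviving the cancellation analysis, and one must similarly control the polynomial factors (requiring them to vanish when the exponent is exactly $0$). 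All these are Presburger conditions on $s$. For $\Int$: summability on an unbounded ray requires the dominant surviving exponent to satisfy $b_{i,\ell}(s) < 0$ (the polynomial factor is then absorbed by the geometric decay), which is again Presburger. In each case one obtains finitely many Presburger subsets of $S$ whose intersection is the locus in question; Lemma~\ref{inters} then packages their intersection (over all cells) as a single zero locus $Z(h_j)$ with $h_j \in \cP_q(S)$.

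The main obstacle I expect is the cancellation analysis for $\Iva$ and $\Bdd$: distinct exponents $q^{b_{i,\ell}(s) z_\ell}$ can coincide or dominate each other differently as $s$ varies, so one cannot uniformly read off which terms ``survive''. I would handle this by a further Presburger case split on $S$ according to the ordering of the slopes $b_{i,\ell}(s)$ and the equalities among the affine functions $\beta_i(s, \cdot) - \beta_{i'}(s, \cdot)$; this refinement is still Presburger, and on each piece one has a single leading exponential whose coefficient is a Presburger constructible expression in $s$, reducing the question to the sign/vanishing of that coefficient. A secondary technical point is to make sure the cell decomposition can be chosen uniformly in $s$ with the $\alpha_{ij}$ and $\beta_i$ simultaneously linear on cells; this is a standard feature of Presburger cell decomposition and can be invoked as a black box.
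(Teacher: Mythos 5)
Your overall strategy is the right one and matches the paper's in outline: rectilinearize $S\times\ZZ^m$ over $S$, reduce $f$ on each piece to an explicit $\sum_i c_i(s)\,z^{a_i}q^{b_i\cdot z}$, analyse summability/boundedness/vanishing in terms of the exponents $b_i$, and glue the resulting descriptions with Lemma~\ref{inters}. However, there are two genuine gaps.

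First, in Parametric Rectilinearization (Theorem~\ref{paramrecti}) the ``bounded'' part of the fibre is a finite set $\Lambda_s\subset\NN^{m-\ell}$ whose size depends on $s$ and is unbounded over $S$ --- it is \emph{not} a fixed box. Your proposal describes it as ``a bounded box,'' and your treatment of $\Iva$ implicitly assumes a fixed number of evaluation points. On fibres of varying finite size, deciding when $f(s,\cdot)$ vanishes identically cannot be reduced to a fixed finite list of coefficient conditions without an extra input. The paper supplies exactly this: Lemma~\ref{lemqexp0}, a uniform bound $M$ (coming from o-minimality of $\RR_{\exp}$, via Wilkie) on the number of zeros of a one-variable function $t\mapsto\sum c_i t^{a_i}q^{b_i t}$. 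That bound gives the key dichotomy: on fibres of size $>M$ vanishing forces all $c_i(s)=0$, and on fibres of size $\le M$ the finitely many points can be listed by $M$ Presburger sections. This dichotomy is the heart of the $\Iva$ argument and is absent from your plan. ``Collecting like exponentials and using $\ZZ$-linear independence generically'' handles only the case when $z$ ranges over an infinite set like $\NN^\ell$.

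Second, and relatedly, your case analysis cannot be carried out as a Presburger partition of $S$. It is true that splitting $S$ according to the ordering of the slopes $b_{i,\ell}(s)$ (which are Presburger) is legitimate, but your ``reduce to the sign/vanishing of the leading coefficient'' is a condition on the \emph{Presburger constructible} coefficients $c_i(s)$, whose zero loci are generally not Presburger sets and are not closed under complementation. You therefore cannot further partition $S$ by ``leading coefficient vanishes / does not vanish'' and recurse. The paper sidesteps this by noticing (Lemma~\ref{lemqcon0}) that summability of $f(s,\cdot)$ over $\NN^\ell$ is equivalent to the \emph{identical vanishing} of the tail $\sum_{i\in I}c_i(s)z^{a_i}q^{b_i\cdot z}$, where $I$ indexes the terms with some $b_{ij}\ge 0$; that is, $\Int$ and $\Bdd$ are reduced to $\Iva$ of an auxiliary constructible function, which has already been handled, with no case analysis on whether individual coefficients vanish. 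To repair your plan you would need both Lemma~\ref{lemqexp0} (or an equivalent uniform-zero-count statement for exponential-polynomials) and the reduction of $\Int$/$\Bdd$ to $\Iva$ of a partial sum; your ``sign/vanishing of the leading coefficient'' characterization is also incomplete (all bad terms, not just the leading one, must vanish).
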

\begin{remark}\label{remvice}
Theorem~\ref{p2} implies that the classes of sets which can appear as different kinds of loci for Presburger constructible functions are all equal,
 since for any given function $h$ in $\cP_q(S)$, there exists $f\in \cP_q(S\times \ZZ)$ such that
$$
Z(h) = \Int(f,S) = \Bdd(f,S) = \Iva(f,S).
$$
Indeed, one can take $f(s,y)=h(s)\cdot y$. Hence the name `correspondences of loci' for Theorem \ref{p2}.
\end{remark}

One can interpolate Presburger constructible functions by Presburger constructible functions with maximal locus of integrability, as follows.

\begin{thm}[Interpolation]\label{p2'}
Let $f$ be in $\cP_q(S\times \ZZ^m)$ for some Presburger set $S$ and some $m\geq 0$.
Then there exists $g$ in $\cP_q(S\times \ZZ^m)$ with $\Int(g,S)=S$ and such that
$f(s,y)=g(s,y)$ whenever $s$ lies in $\Int(f,S)$.
\end{thm}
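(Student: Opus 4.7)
The plan is to use Theorem \ref{p2} to reduce the construction of $g$ to producing a suitable decomposition of $f$. Concretely: apply Theorem \ref{p2} to $f$ to obtain $h_1 \in \cP_q(S)$ with $Z(h_1) = \Int(f,S)$, and then aim for a decomposition
$$
f(s,y) \;=\; h_1(s)\,R(s,y) \;+\; g(s,y)
$$
with $R \in \cP_q(S\times\ZZ^m)$ and $g \in \cP_q(S\times\ZZ^m)$ satisfying $\Int(g,S) = S$. Once this decomposition is in hand the conclusion is immediate: on $Z(h_1) = \Int(f,S)$ the first summand vanishes, so $g(s,y) = f(s,y)$ there, and by construction $g(s,\cdot)$ is integrable over $\ZZ^m$ for every $s\in S$.

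To achieve the decomposition I would follow the same cell-decomposition machinery that produces $h_1$ in Theorem \ref{p2}. Partition $S\times\ZZ^m$ into finitely many Presburger cells; on each cell the restriction of $f$ is a sum of monomials of the shape $a\,q^{\beta(s,y)}\prod_j \alpha_j(s,y)$, and the obstruction to absolute summability of such a monomial along the $y$-fibre is a conjunction of Presburger conditions on $s$ (sign of the leading exponent, plus vanishing of certain polynomial coefficients). Combining these obstructions across cells via sums of squares, as in Lemma \ref{inters}, is exactly what yields $h_1$; at the same time this analysis writes the non-integrable leading parts explicitly as $h_1(s)\,R_\ell(s,y)$ on each cell, leaving an integrable remainder $g_\ell \in \cP_q$. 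Setting $R = \sum_\ell R_\ell$ and $g = \sum_\ell g_\ell$ finishes the construction. An equivalent route is by induction on $m$: apply the one-dimensional case to the last coordinate (with parameter space $S\times\ZZ^{m-1}$), then sum it out using the closure of $\cP_q$ under summation (Theorem \ref{p1q-new}) and recurse.

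The main obstacle is cancellation -- between different cells, or between several monomials inside a single cell. Non-integrable leading contributions can cancel to leave an integrable sum, and this is precisely the phenomenon that prevents $\Int(f,S)$ from being a Presburger set in general and forces Theorem \ref{p2} to describe it only as a zero locus in $\cP_q$. Consequently one cannot perform the decomposition cell-by-cell in a fully independent way: one has to group cells whose leading behaviours may interfere and cleanly factor out their joint Presburger-constructible witness, which is exactly what $h_1$ encodes. This is the same delicate bookkeeping that is already required for Theorem \ref{p2}, and in practice it is most natural to prove Theorem \ref{p2} and Theorem \ref{p2'} together, extracting $h_1$ and the remainder $g$ from the same refined cell decomposition.
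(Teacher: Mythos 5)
Your proposal correctly identifies the right machinery (rectilinearize $S\times\ZZ^m$, put $f$ into monomial normal form, and split monomials into integrable and non-integrable pieces, handling Theorems \ref{p2} and \ref{p2'} together), and your final recipe $g=\sum_\ell g_\ell$ is essentially what the paper does: on each rectilinearized piece where $f(s,y)=\sum_{i=1}^r c_i(s)y^{a_i}q^{b_i\cdot y}$, one sets $g$ to be the subsum over the indices $i$ for which the monomial is automatically summable in $y$, and discards the rest.

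However, the intermediate target you set yourself — a global decomposition
$f(s,y)=h_1(s)\,R(s,y)+g(s,y)$ with $R\in\cP_q(S\times\ZZ^m)$ — is neither achievable in general nor needed. The non-integrable part of $f$ is, piecewise, a sum $\sum_{i\in I}c_i(s)y^{a_i}q^{b_i\cdot y}$ with several different coefficient functions $c_i\in\cP_q(S)$, while $h_1$ is a \emph{single} function (e.g.\ a sum of squares of the various $c_i$ across all pieces) whose zero locus is $\Int(f,S)$; there is no division in $\cP_q$, so you cannot factor $h_1$ out of each $c_i$. The correct justification is more direct: by the non-identical-vanishing part of Lemma \ref{lemqcon0}, summability of $f(s,\cdot)$ on a rectilinearized piece forces \emph{all} the bad-index coefficients $c_i(s)$, $i\in I$, to vanish individually, so for $s\in\Int(f,S)$ the discarded part is zero and $f(s,y)=g(s,y)$ automatically — no factorization through $h_1$ is required. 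Relatedly, your remark that ``the obstruction to absolute summability ... is a conjunction of Presburger conditions on $s$'' is not right: the obstruction involves the condition $c_i(s)=0$ for $c_i\in\cP_q(S)$, which is precisely the kind of zero-locus condition that is \emph{not} Presburger in general — indeed that is the entire subtlety motivating the statement of Theorem \ref{p2}. So: keep the cell-by-cell separation of monomials and the observation from Lemma \ref{lemqcon0}; drop the $h_1R$ decomposition.
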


The following result on stability of $\cP_q$ under summation generalizes Theorem-Definition 4.5.1 of \cite{CLoes} which in turn goes back to Lemma 3.2 of \cite{Denef3}. Theorem-Definition 4.5.1 of \cite{CLoes} is the special case of Theorem \ref{p1q-new} for which $\Int(f,S)=S$.

\begin{thm}
[Integration]\label{p1q-new}
Let $f$ be in $\cP_q(S\times \ZZ^m)$ for some Presburger set $S$ and some $m\geq 0$. Then
there exists a function $g \in \cP_q(S)$ such that
$$
g(s) = \sum_{y\in\ZZ^m} f(s,y)
$$
whenever $s \in \Int(f,S)$.
\end{thm}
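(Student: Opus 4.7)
The plan is to reduce Theorem \ref{p1q-new} to the special case already established as Theorem-Definition 4.5.1 of \cite{CLoes}, namely the case where $\Int(f,S)=S$ holds. The bridge between the two is the interpolation result (Theorem \ref{p2'}): it asserts that any $f\in\cP_q(S\times\ZZ^m)$ can be modified outside its locus of integrability so as to become integrable for every parameter $s\in S$, without changing its values on $\Int(f,S)\times\ZZ^m$.

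Concretely, I would proceed as follows. First, apply Theorem \ref{p2'} to $f$ in order to obtain a function $\tilde f\in\cP_q(S\times\ZZ^m)$ with $\Int(\tilde f, S)=S$ and such that $\tilde f(s,y)=f(s,y)$ for all $s\in\Int(f,S)$ and all $y\in\ZZ^m$. Next, apply the old integration result from \cite{CLoes} to $\tilde f$; this produces a function $g\in\cP_q(S)$ such that
$$
g(s) \;=\; \sum_{y\in\ZZ^m}\tilde f(s,y)
$$
for every $s\in S$ (the sum converges absolutely because $\Int(\tilde f,S)=S$). Finally, for any $s\in\Int(f,S)$ one has $\tilde f(s,\cdot)=f(s,\cdot)$ and both functions are summable, so $g(s)=\sum_{y}f(s,y)$, as required.

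The only genuinely non-trivial steps in this chain are the two results being invoked: the classical $\cP_q$-stability under summation from \cite{CLoes}, which uses Presburger cell decomposition together with the closed formulas for geometric series that justify inverting $1-q^{-i}$ in $\AA_q$; and Theorem \ref{p2'}, which allows one to repair integrability on the bad locus. The obstacle that Theorem \ref{p2'} handles is precisely the issue that the sum $\sum_y f(s,y)$ can be totally ill-behaved (even meaningless) off $\Int(f,S)$, so one cannot hope to describe a single constructible $g$ that captures the sum pointwise on all of $S$; one must deform $f$ on the non-integrable locus first, and the content of Theorem \ref{p2'} is that this deformation can be done within $\cP_q(S\times\ZZ^m)$. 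Once that is granted, the passage to $g\in\cP_q(S)$ is immediate. Note also that the values of $g$ outside $\Int(f,S)$ are not constrained by the statement, so no further work is required there; in particular, the particular choice of interpolant $\tilde f$ only affects $g$ on $S\setminus\Int(f,S)$.
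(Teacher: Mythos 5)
Your proposal is correct and is essentially identical to the paper's proof: both invoke the interpolation result (Theorem \ref{p2'}) to replace $f$ by an everywhere-integrable $g_0\in\cP_q(S\times\ZZ^m)$ agreeing with $f$ on $\Int(f,S)\times\ZZ^m$, and then apply Theorem-Definition 4.5.1 of \cite{CLoes} to $g_0$ to obtain $g$. No discrepancies.
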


Before proving Theorems \ref{p2}, \ref{p2'} and \ref{p1q-new}, we give some auxiliary results.
The first auxiliary lemma can be obtained as a direct corollary of Wilkie's Theorem of \cite{WilkieRexp} on the o-minimality of the real number field with the exponential function.
\begin{lem}\label{lemqexp0}
Let $h:\RR_{\geq 0}\to \RR$ be a function of the form
$$
h(x)=\sum_{i=1}^r c_i x^{a_i} b_{i}^x,
$$
where the $a_i,b_i,c_i$ are real numbers, the $c_i$ and $b_i$ are nonzero, and $r\geq 1$. Suppose that the pairs $(a_i,b_i)$ are mutually different for different $i$. Then the number of zeros of $f$ is bounded by a constant only depending on $r$.
\end{lem}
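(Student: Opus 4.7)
The plan is to invoke Wilkie's theorem that $\RR_{\exp}=(\RR,+,\cdot,<,\exp)$ is o-minimal, combined with the uniform finiteness principle that holds in any o-minimal structure. Note that we may assume $b_i>0$ for all $i$ (otherwise $b_i^x$ is not real-valued for non-integer $x$, so the lemma tacitly assumes this; moreover in the intended applications the $b_i$ are of the form $q^{\text{integer}}$ with $q>1$). Then for $x>0$ we can write
$$
x^{a_i}b_i^x = \exp(a_i\log x + (\log b_i)\,x),
$$
so $h$ is definable in $\RR_{\exp}$ uniformly in the $3r$ real parameters $(a_i,b_i,c_i)_{i=1}^r$. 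Hence the collection of zero sets $Z(h)$, as the parameters vary over the definable set
$$
P = \{(a,b,c)\in\RR^{3r} \mid c_i\neq 0,\ b_i>0,\ (a_i,b_i)\text{ pairwise distinct}\},
$$
forms a definable family of subsets of $(0,\infty)$.

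Next I would show that, for each parameter $p\in P$, the zero set $Z(h_p)$ is finite. The function $h_p$ is real-analytic on $(0,\infty)$ and, by linear independence of the functions $\{x^{a_i}b_i^x\}_i$ over $\RR$ (which in turn follows from the distinctness of the pairs $(a_i,b_i)$ together with $c_i\neq 0$), is not identically zero. Thus $Z(h_p)\cap(0,\infty)$ is closed and discrete. But $Z(h_p)$ is also definable in the o-minimal structure $\RR_{\exp}$, and any definable subset of $\RR$ in an o-minimal structure is a finite union of points and open intervals; a discrete such set can therefore contain no interval and must be finite.

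To conclude, I would apply the uniform finiteness principle (a standard corollary of cell decomposition in o-minimal structures): for a definable family of subsets of $\RR$ whose fibers are all finite, the cardinality of the fibers is uniformly bounded by a constant depending only on the defining formula, and here that formula depends only on $r$. This yields $|Z(h_p)|\leq N(r)$ for all $p\in P$, as desired.

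The only real obstacle is conceptual rather than technical: one must recognize that $h$ falls within Wilkie's language and that the uniform finiteness statement is the right tool. Minor cleanup is needed at the boundary point $x=0$ when some $a_i<0$ (the function is not defined there), but omitting a single point changes $|Z(h)|$ by at most one and does not affect a bound depending only on~$r$.
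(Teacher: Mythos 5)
Your proposal is correct and follows essentially the same route as the paper's proof: realize the family of functions $h$ (with $r$ fixed) as a definable family in $\RR_{\exp}$, note the zeros are discrete, and invoke uniform finiteness for definable families in o-minimal structures. The paper states this quite tersely (in particular leaving the discreteness of the zero sets unjustified), whereas you supply the missing details via the linear-independence-plus-analyticity argument; the only quibble is that linear independence of the functions $x^{a_i}b_i^x$ follows from distinctness of the pairs $(a_i,b_i)$ alone, with $c_i\neq 0$ being what you need to then conclude $h\not\equiv 0$.
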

\begin{proof}
All functions $h$ of the above form but with fixed $r$ are members of a single definable family of functions with discrete zeros in the o-minimal structure of the real number field enriched with the exponential function. Now just note that discrete sets which appear as members of a family of sets in an o-minimal structure are finite and uniformly bounded in size, cf.~\cite{vdD}.
\end{proof}

\begin{lem}\label{lemqcon0}
Let $h:\NN^m\to \RR$ be a function of the form
$$
h(x)=\sum_{i=1}^r c_i q^{b_{i1}x_1+\cdots b_{im}x_m} \prod_{j=1}^m x_j^{a_{ij}},
$$
where the $c_i$ are nonzero real numbers, the $a_{ij}$ and $b_{ij}$ are integers, $a_{ij}\geq 0$, $m\geq 1$, and $r\geq 1$. Suppose that the tuples $(a_{i1},\ldots,a_{im},b_{i1},\ldots,b_{im})$ are mutually different for different $i$. Then $h$ is not identically zero. Furthermore, $h$ is summable over $\NN^m$ if and only if $b_{ij}\leq -1$ for all $i,j$. Finally, $h$ is bounded if and only if simultaneously all $b_{ij}$ are $\leq 0$ and for each $i,j$ with $b_{ij}=0$ one has $a_{ij}=0$.
\end{lem}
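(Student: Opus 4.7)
The plan is to prove all three claims by induction on $m$, proving the non-vanishing statement first since it feeds into the proofs of summability and boundedness. Throughout, the key reduction is to group the summands of $h$ by the value of the pair $(a_{im}, b_{im})$ in the last coordinate, writing
$$
h(x_1,\ldots,x_m) = \sum_{(a,b)} C_{a,b}(x_1,\ldots,x_{m-1})\, x_m^{a}\, q^{b x_m},
$$
where
$$
C_{a,b}(x_1,\ldots,x_{m-1}) = \sum_{i:\,(a_{im},b_{im})=(a,b)} c_i\, q^{b_{i1}x_1+\cdots+b_{i,m-1}x_{m-1}} \prod_{j<m} x_j^{a_{ij}},
$$
and to observe that each $C_{a,b}$ is a sum of the same form in $m-1$ variables whose defining tuples remain pairwise distinct, so the induction hypothesis applies.

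For the non-vanishing part I would treat $m=1$ by applying Lemma~\ref{lemqexp0} to the function $h(x) = \sum_i c_i x^{a_i}(q^{b_i})^x$: since $q \neq 1$, distinct $b_i$ give distinct $q^{b_i}$, so the hypotheses of \ref{lemqexp0} are met and $h$ has only finitely many zeros in $\RR_{\geq 0}$, hence is not identically zero on $\NN$. For $m\geq 2$, the induction hypothesis gives that each $C_{a,b}$ is not identically zero on $\NN^{m-1}$; picking any pair $(a_0,b_0)$ occurring in $h$ and any point $x^* \in \NN^{m-1}$ with $C_{a_0,b_0}(x^*)\neq 0$, the one-variable function $y \mapsto h(x^*,y)$ is a non-trivial sum of the shape in Lemma~\ref{lemqexp0}, hence not identically zero.

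For summability, the direction assuming all $b_{ij}\leq -1$ is routine: each single summand is absolutely summable over $\NN^m$ by a product of one-variable geometric-polynomial sums $\sum_{x\in\NN} x^a q^{bx}$ with $b\leq -1$ and $q>1$, and finitely many absolutely summable terms sum to something absolutely summable. For the converse, suppose some $b_{i_0 j_0}\geq 0$; by symmetry take $j_0=m$. Let $(b^*,a^*)$ be lexicographically maximal among the pairs $(b_{im},a_{im})$ appearing in $h$ (first maximize $b$, then $a$). By the non-vanishing part, choose $x^*\in\NN^{m-1}$ with $C_{a^*,b^*}(x^*)\neq 0$. Then as $y\to\infty$ the sum $\sum_{(a,b)} C_{a,b}(x^*)\, y^a q^{by}$ is asymptotic to $C_{a^*,b^*}(x^*)\, y^{a^*} q^{b^* y}$, so in each sub-case ($b^*>0$; or $b^*=0, a^*>0$; or $b^*=0, a^*=0$) the one-variable series $\sum_y |h(x^*,y)|$ diverges, and therefore $h$ is not summable over $\NN^m$.

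The boundedness argument has the same structure as the converse of summability. The easy direction: if for every $(i,j)$ one has $b_{ij}\leq 0$ with $a_{ij}=0$ whenever $b_{ij}=0$, then each summand $c_i\,q^{\sum_j b_{ij}x_j}\prod_j x_j^{a_{ij}}$ is bounded on $\NN^m$ factor-by-factor (exponential decay beats polynomial growth in each coordinate with $b_{ij}<0$; the remaining coordinates contribute constants). For the converse, if the condition fails for some pair, say $j=m$, pick the lex-maximal $(b^*,a^*)$ as above, so either $b^*>0$ or $(b^*,a^*)=(0,a^*)$ with $a^*>0$; choosing $x^*\in\NN^{m-1}$ with $C_{a^*,b^*}(x^*)\neq 0$, the asymptotic $h(x^*,y)\sim C_{a^*,b^*}(x^*)\,y^{a^*}q^{b^*y}$ shows $|h(x^*,y)|\to\infty$ along $y\in\NN$, so $h$ is unbounded. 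The main obstacle throughout is the possibility of cancellation between summands, which is what forces us to go through the grouping by $(a_{im},b_{im})$ together with the inductive non-vanishing of each $C_{a,b}$ before any asymptotic comparison can be trusted; once non-vanishing is secured, Lemma~\ref{lemqexp0} delivers the rest.
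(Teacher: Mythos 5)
Your proposal is correct and follows essentially the same route as the paper: non-vanishing by induction on $m$ with Lemma~\ref{lemqexp0} at the base, and for the converse directions of summability and boundedness, isolating the lexicographically dominant pair $(b^*,a^*)$ in one coordinate and invoking the non-vanishing of its coefficient function in the remaining $m-1$ variables to exhibit a one-dimensional slice where the leading term neither decays nor stays bounded. The paper works with the first coordinate and takes $b_{11}$ to be the global maximum, whereas you work with the last coordinate after a WLOG, but these are cosmetic differences; your write-up also makes the underlying asymptotic comparison explicit, which the paper leaves implicit when it asserts the leading part must vanish identically.
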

\begin{proof}
That $h$ is not identically zero easily follows by induction on $m$ and by Lemma \ref{lemqexp0} for the case $m=1$.
If all the $b_{ij}$ are $<0$ then clearly $h$ is summable. For the other direction,
suppose that $h$ is summable but some $b_{ij}$ is $\geq 0$, say, $b_{11}\geq 0$. We may suppose that $b_{11}$ is maximal among the $b_{ij}$. Put $I=\{i\mid b_{i1}=b_{11}\}$. We may suppose that $a_{11}$ is maximal among the $a_{i1}$ with $i\in I$. Put $J=\{i\in I\mid a_{i1}=a_{11}\}$.
Then the function
$$
x \mapsto q^{b_{11}x_1}x_1^{a_{11}}\sum_{i\in J}  c_i q^{ b_{i2}x_2+\cdots b_{im}x_m }\prod_{j=2}^m x_j^{a_{ij}}
$$
must be identically zero on $\NN^m$ by the summability of $h$, which is impossible by the first statement of the lemma. The statement about boundedness is obtained similarly.
\end{proof}

The following result, Theorem 3 of \cite{CPres}, will be the basis for the proofs of the results in this section.
\begin{thm}[Parametric Rectilinearization \cite{CPres}]\label{paramrecti}
Let $S$ and $X\subset S\times \ZZ^m$ be Presburger sets. Then there exists a finite partition of $X$ into
Presburger sets such that for each part $A$, there is a Presburger set $B\subset S\times \ZZ^{m}$
and a linear Presburger bijection $\rho:A\to B$ over $S$
such that, for each $s\in S$, the set
$B_s$
is
a set of the form
$\Lambda_s\times \NN^\ell$ for a finite subset $\Lambda_s\subset \NN^{m-\ell}$ depending on $s$ and for an integer $\ell\geq 0$ only depending on
$A$.
\end{thm}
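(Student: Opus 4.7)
The plan is to proceed by induction on $m$, peeling off one coordinate at a time using a parametric Presburger cell decomposition relative to $S$.

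For the base case $m = 0$ there is nothing to do: take $\ell = 0$ and $\Lambda_s = \{0\}$ when $s \in X$, else empty. For the inductive step, I would apply cell decomposition in the last coordinate $y_m$ over the parameters $(s,y')$ with $y' = (y_1,\ldots,y_{m-1})$. By Presburger quantifier elimination \cite{Presburger}, this partitions $X$ into finitely many cells of the form
$$
C = \bigl\{(s,y',y_m) : (s,y') \in D,\ \alpha(s,y') \,\sq_1\, y_m \,\sq_2\, \beta(s,y'),\ y_m \equiv c \bmod n\bigr\},
$$
where $D \subset S \times \ZZ^{m-1}$ is Presburger, $\alpha, \beta$ are linear Presburger functions of $(s,y')$, $c$ and $n > 0$ are integers, and each of $\sq_1, \sq_2$ is $<$, $\leq$, or absent. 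By refining $D$ further I may assume the residue of $\alpha(s,y')$ modulo $n$ is constant on each piece.

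I would then handle each cell $C$ according to the boundedness of $y_m$. If both $\alpha$ and $\beta$ are present, then the fibers of $C \to D$ are finite arithmetic progressions whose cardinality is a Presburger function of $(s,y')$; I absorb the $y_m$-coordinate into the $\Lambda_s$-part of the rectilinearization supplied by the induction hypothesis applied to $D$. If only a lower bound is present, say $\alpha(s,y') \leq y_m$, then the substitution $y_m = \alpha(s,y') + c' + nk$ (with $c'$ the constant residue) is a linear Presburger bijection sending $C$ onto $D' \times \NN$ for some Presburger set $D' \subset S \times \ZZ^{m-1}$ linearly isomorphic to $D$ over $S$; I apply induction to $D'$ and adjoin the new $\NN$-factor to the $\NN^\ell$-part. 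If $y_m$ is unbounded in both directions, I split $\ZZ = \NN \sqcup (-\NN_{>0})$ and reduce to the previous case on each piece. A symmetric argument treats the case of only an upper bound.

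The main obstacle is bookkeeping. I must choose the initial cell decomposition finely enough that on each piece the affine substitutions restrict to genuine linear Presburger bijections (in particular, the residues involved, the coefficients of $\alpha$ and $\beta$, and the presence or absence of each bound must be constant on the piece). I must also check that the integer $\ell$ produced at the end depends only on the piece $A$ and not on $s$; this is forced by the uniformity of the cell decomposition, since the combinatorial \emph{type} of a cell -- which bounds are present, the fixed residues, and which coordinates become the free $\NN$-factors versus $\Lambda_s$-coordinates -- is constant across $s \in S$. Composing the substitutions from the inductive step with the rectilinearization of $D$ or $D'$ yields the desired linear Presburger bijection $\rho : A \to B$ over $S$ with $B_s = \Lambda_s \times \NN^\ell$.
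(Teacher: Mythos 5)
There is a genuine gap in the bounded case of your inductive step, and it is not a bookkeeping issue. When both bounds $\alpha(s,y')$ and $\beta(s,y')$ are present, you propose to rectilinearize $D$ by induction and then ``absorb the $y_m$-coordinate into the $\Lambda_s$-part.'' But the rectilinearization of $D$ produces fibers of the form $\Lambda'_s\times\NN^{\ell'}$, and the length $\beta(s,y')-\alpha(s,y')$ of the $y_m$-fiber can grow without bound as the $\NN^{\ell'}$-coordinate of $y'$ grows; there is then no finite set $\Lambda_s$ into which $y_m$ could be absorbed. Take $S$ a point and $X=\{(y_1,y_2)\in\ZZ^2:0\le y_2\le y_1\}$: your cell decomposition in $y_2$ gives the single cell with $D=\NN$ and $0\le y_2\le y_1$; $D$ is already rectilinear with $\Lambda=\{0\}$ and $\ell=1$, yet the $y_2$-fiber over $y_1=n$ has $n+1$ elements, so the absorption step cannot be performed. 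Rectilinearizing this triangle requires a linear change of variables that mixes the \emph{preceding} coordinate with $y_m$, e.g.\ $(y_1,y_2)\mapsto(y_1-y_2,\,y_2)$, which sends $X$ onto $\NN^2$.

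Your scheme cannot produce such a map: you only ever substitute on $y_m$ (via $y_m\mapsto y_m-\alpha(s,y')-c'$ in the one-sided cases) and then pass the untouched $D$ to the induction hypothesis, so no ``shearing'' of the earlier variables by $y_m$ ever occurs. That shearing, carried out uniformly in the parameters and compatibly with the nested cell structure, is precisely the nontrivial content of Theorem 3 of \cite{CPres}. Note also that the present paper does not reprove this statement; it is cited verbatim from \cite{CPres}, so there is no in-paper proof to compare against, but the argument you sketch would need to be repaired at the bounded-cell step before it could reproduce that result.
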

Recall that $B_s$ in Theorem \ref{paramrecti} is the set $\{z\in \ZZ^m\mid (s,z)\in B\}$, and that for $\rho$ to be over $S$ means that $\rho$ makes a commutative diagram with the projections from $A$ and $B$ to $S$, see Section \ref{sec:loci}. Some direct generalizations of Theorem \ref{paramrecti} will be stated as Theorem \ref{paramrectip} in a $p$-adic setting, and as Theorem \ref{paramrectimot} in a uniform $p$-adic setting.

\begin{proof}[Proof of Theorems \ref{p2} and \ref{p2'}]
We first prove the existence of $h_3$ as in (\ref{p3}), that is, we first prove the result for $\Iva(f,S)$.
Since the statement for $m=1$ can be applied successively, it is enough to prove the case $m=1$.  By Theorem \ref{paramrecti} and since Presburger functions are piecewise linear, there exists a finite partition of $S\times \ZZ$ and for each part $A$ a Presburger bijection $\rho:A\to B$ over $S$ such that either $B_x=\NN$ or $B_x$ is finite for each $x\in S$ and such that $f\circ \rho^{-1}$  is of the form
$$
(x,t)\mapsto \sum_{i=1}^r c_i(x) t^{a_i}q^{b_it}
$$
for some integers $a_i,b_i$ with $a_i\geq 0$, and some Presburger constructible functions $c_i$, and where the pairs $(a_i,b_i)$ are mutually different for different $i$. Denote the image of $A$ under the projection map $A\to S$ by $S_A$.
By Lemma \ref{lemqexp0} there exists a constant $M\geq 0$ such that, for each fixed value of $x$, either the $c_i(x)$ are all zero for $i=1,\ldots,r$, or, the function $t\mapsto \sum_{i=1}^r c_i(x) t^{a_i}q^{b_it}$ has at most $M$ zeros.
Write $S_{A,1}$ for the set of $x\in S_A$ such that $|B_x|\leq M$, and let $S_{A,2}$ be $S_{A}\setminus S_{A,1}$. Note that $S_{A,1}$ and $S_{A,2}$ are Presburger sets.
We take $M$ Presburger functions $H_1,\ldots, H_M$ on $S_{A,1}$ such that the union of the graphs of the $H_j$ equals
$B\cap (S_{A,1}\times \ZZ)$. We write
$$
Q_{A}:=\{x\in S_{A,1} \mid \bigwedge_{j\in \{1, \dots, M\}} (\sum_{i=1}^r c_i(x) H_j(x)^{a_i}q^{b_i H_j(x)} = 0) \},
$$
and
$$
R_{A}:=\{x\in S_{A,2} \mid \bigwedge_{i\in \{1, \dots, r\}} ( c_i(x) = 0 ) \}.
$$
By Lemma \ref{inters} each of the sets $Q_A$ and $R_{A}$ is the zero locus of a Presburger constructible function on $S$.
Now one has
$$
\Iva(f,S) = \bigcap\limits_{A} ( Q_A \cup R_A \cup (S\setminus S_A) ).
$$
By Lemma \ref{inters} and since the characteristic functions of the Presburger sets $S\setminus S_A$ are Presburger constructible, the existence of $h_3$ follows. This proves the existence of $h_3$ as in (\ref{p3}) for any given $f\in \cP_q(S\times \ZZ^m)$.

We use this result to prove simultaneously Theorem \ref{p2'} and the existence of $h_1$ and $h_2$.
The statements clearly allow us to partition $S \times \ZZ^m$ into finitely many pieces $A$
and to treat each one separately (for the existence of $h_1$ and $h_2$, this uses Lemma~\ref{inters}).
We choose a partition such that all Presburger functions involved in $f$ are
Presburger linear, we refine this partition using Theorem \ref{paramrecti}, and consider one resulting piece $A$. We can replace $A$ by $B$ and
$f$ by $f \circ \rho^{-1}$ with notation from Theorem \ref{paramrecti}, so that in the end, we get one Presburger set $B$ on which we have
\begin{equation}\label{frecti}
f (s,y) = \sum_{i=1}^r c_i(s) y^{a_i}q^{b_i\cdot y}
\end{equation}
where we use multi-index notation and where $a_i,b_i\in \ZZ^m$ with $a_{ij}\geq 0$, the $c_i$ are Presburger constructible functions in $s\in S$, the tuples $(a_i,b_i)$ are mutually different for different $i$, and where for each $s\in S$, one has $B_s=\Lambda_s\times \NN^\ell$ for a fixed $\ell\geq 0$
and
some finite set $\Lambda_s\subset \NN^{m-\ell}$ depending on $s$.
In fact, now we are already done by Lemma \ref{lemqcon0} and the existence of $h_3$ with $Z(h_3) = \Iva(g)$ for any given $g$. Indeed, let $I$ be $\{i\mid b_{ij}\geq 0\mbox{ for some } j = m-\ell +1,\ldots, m\}$. Consider the function on $B$
$$
h:(s,y)\mapsto \sum_{i\in I} c_i(s) y^{a_i}q^{b_i\cdot y}
$$
for $s\in S$ and $y\in B_s$. Let $\widetilde h$ be the extension by zero of $h$ to a function on $S\times \ZZ^{m}$.
By Lemma \ref{lemqcon0}, for $s\in S$, the family
$\{f  (s,y)\}_{y}$, where   $y \in B_s$, is summable if and only if $\widetilde h(s,y)=0$ for all $y\in\ZZ^{m}$. Since $\widetilde h$ is a Presburger constructible function on $S\times \ZZ^{m}$,
(\ref{p2eq}) follows using $h_1$ with $Z(h_1) = \Iva(\widetilde h)$. Taking $I'= \{i\mid ( b_{ij}> 0,\mbox{ or, } (b_{ij} =  0 \mbox{ and } a_{ij}>0))  \mbox{ for some } j = m-\ell +1,\ldots, m\}$ instead of $I$ in the above construction, one obtains the existence of $h_2$ for (\ref{p2b}) in a similar way.
Theorem \ref{p2'} also follows, since we can define $g$ piecewise for $(s,y)$ in $B$ by
$$
g (s,y) = \sum_{i\in \{1,\ldots,r\}\setminus I} c_i(s) y^{a_i}q^{b_i\cdot y}.
$$
\end{proof}

\begin{proof}[Proof of Theorem \ref{p1q-new}] By
the interpolation result Theorem \ref{p2'}, there exists $g_0$ in $\cP_q(S\times \ZZ^m)$ with $\Int(g_0,S)=S$ and such that
$f(s,y)=g_0(s,y)$ whenever $s$ lies in $\Int(f,S)$. Now, by Theorem-Definition 4.5.1 of \cite{CLoes}, the function $g$ that sends $s\in S$ to  $\sum_{y\in\ZZ^m} g_0(s,y)$ lies in $\cP_q(S)$. Clearly $g$ is as required.
\end{proof}

The above proof of Theorem \ref{p1q-new} uses Theorem-Definition 4.5.1 of \cite{CLoes}. For an alternative proof of Theorem \ref{p1q-new} which does not rely on \cite{CLoes}, 
one can proceed as follows. 
Use Theorem \ref{paramrecti} to reduce to the case that $g$ is a sum as in the right hand side of  (\ref{frecti}), but with $m=1$.
If $y=y_1$ runs over $\NN$, one knows that the $b_i$ from (\ref{frecti}) are $<0$ by the proof of Theorem \ref{p2} and one uses explicit formulas for the summation of geometric power series and their derivatives. When $\Lambda_s\subset \ZZ$ is finite, with notation from (\ref{frecti}), one may
furthermore assume that $\Lambda_s$ is of the form $\{z\in\ZZ\mid 0\leq z\leq a(s)\}$, where $a$ is a positively valued Presburger function in $s$ and one uses geometric power series and their derivatives again to sum over $\Lambda_s$.

\subsection{Uniformity in the base $q$}\label{sec:qunif}

We show that the results of Section \ref{sec:qfix} hold uniformly in the base $q$. We will use this uniformity in the motivic setting.
Write $\RR_{>1}$ for $\{q\in\RR \mid q>1\}$.
\begin{defn}\label{defu}
As in \cite{CLoes}, define the subring $\AA\subset \QQ(\LL)$ as
 $$
\ZZ\left[\LL,\LL^{-1},\left(\frac{1}{1-\LL^{-i}}\right)_{i\in \NN,\ 0<i}\right],
 $$
where $\LL$ is a formal symbol. Each $a\in\AA$ is considered as a function
\begin{equation}\label{aq}
a: \RR_{>1}\to \RR:q\mapsto a(q)
\end{equation}
obtained by setting $\LL=q$.
For $S$ a Presburger set, let $\cP^u(S)$ be the $\AA$-algebra of $\RR$-valued functions on $S\times \RR_{>1}$ generated by
 \begin{enumerate}
\item the functions $\alpha:S \times \RR_{>1}\to \RR:(s,q)\mapsto \alpha(s)$ for all Presburger functions $\alpha:S\to\ZZ$,

\item the functions $q^\beta:S \times \RR_{>1}\to \RR:(s,q)\mapsto q^{\beta(s)}$ for all Presburger function $\beta:S\to\ZZ$.
\end{enumerate}
The functions in $\cP^u(S)$ are called Presburger constructible functions on $S$ with uniform base.
\end{defn}

The ring $\AA$ and a close variant of the rings $\cP^u(S)$ also appear in \cite{CLoes}.
The analogues of Theorems \ref{p2}, \ref{p2'}, and \ref{p1q-new} hold with almost the same proofs.

\begin{thm} [Correspondences of loci]\label{p2u} Let $S$ be a Presburger set and let $f$ be in $\cP^u(S\times \ZZ^m)$
for some $m\geq 0$.
Then there exist $h_1,h_2$ and $h_3$ in $\cP^u(S)$ such that
\begin{equation*}
\Int(f,S\times \RR_{>1}) = Z(h_1), 
\end{equation*}
\begin{equation*}
\Bdd(f,S\times \RR_{>1}) = Z(h_2), 
\end{equation*}
and
\begin{equation*}
\Iva(f,S\times \RR_{>1}) = Z(h_3), 
\end{equation*}
\end{thm}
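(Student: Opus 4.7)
The plan is to imitate the proof of Theorem \ref{p2} step by step, with $(s,q)\in S\times\RR_{>1}$ playing the role of the parameter and keeping track of the fact that the auxiliary constructions all land in $\cP^u(S)$ rather than $\cP_q(S)$. The two facts that make this possible are: (a) Parametric Rectilinearization (Theorem~\ref{paramrecti}) is a statement about Presburger sets and Presburger functions only, so it involves no base $q$; and (b) Lemmas \ref{lemqexp0} and \ref{lemqcon0} are uniform in $q$ — the bound on the number of zeros depends only on the number of terms $r$, and the summability/boundedness criteria are phrased purely in terms of the integer exponents $a_{ij},b_{ij}$, with no dependence on $q>1$.

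First I would establish the existence of $h_3$ with $Z(h_3)=\Iva(f,S\times\RR_{>1})$, reducing to $m=1$ as in the proof of \ref{p2}. Apply Parametric Rectilinearization to partition $S\times\ZZ$ into finitely many pieces $A$, each admitting a linear Presburger bijection $\rho:A\to B$ over $S$ with $B_s$ either finite or equal to $\NN$, and on each piece write
$$
(f\circ \rho^{-1})(s,q,t)=\sum_{i=1}^r c_i(s,q)\, t^{a_i} q^{b_i t},
$$
where the pairs $(a_i,b_i)$ are distinct integer pairs and $c_i\in\cP^u(S)$. Lemma \ref{lemqexp0} supplies a constant $M=M(r)$, uniform in $(s,q)$, bounding the number of zeros of $t\mapsto (f\circ\rho^{-1})(s,q,t)$ whenever at least one $c_i(s,q)$ is nonzero. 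On the Presburger set $S_{A,1}=\{s\in S_A : |B_s|\le M\}$, parametrize $B_s$ by $M$ Presburger functions $H_1,\dots,H_M$ (independent of $q$) and define
$$
Q_A=\Bigl\{(s,q)\in S_{A,1}\times\RR_{>1} \;\Bigm|\; \bigwedge_{j=1}^M \sum_{i=1}^r c_i(s,q)\,H_j(s)^{a_i} q^{b_i H_j(s)} = 0\Bigr\},
$$
and on $S_{A,2}=S_A\setminus S_{A,1}$ define $R_A$ by $\bigwedge_i c_i(s,q)=0$. Both are zero loci of $\cP^u$ functions, and combining over all pieces using Lemma \ref{inters} (which works identically in $\cP^u$) yields the desired $h_3$.

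Next I would prove the existence of $h_1$ and $h_2$, and simultaneously obtain the uniform analogue of Theorem \ref{p2'} that will be needed. After the same rectilinearization, on a piece where $B_s=\Lambda_s\times\NN^\ell$ the function is
$$
f(s,q,y)=\sum_{i=1}^r c_i(s,q)\, y^{a_i} q^{b_i\cdot y},
$$
with $c_i\in\cP^u(S)$ and distinct $(a_i,b_i)$. For summability, let $I=\{i : b_{ij}\ge 0 \text{ for some } j\in\{m-\ell+1,\dots,m\}\}$ and set $\widetilde h(s,q,y)$ to be the extension by zero of $\sum_{i\in I}c_i(s,q)y^{a_i}q^{b_i\cdot y}$ from $B$ to $S\times\ZZ^m$. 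Lemma \ref{lemqcon0}, applied to each fixed $(s,q)$, shows that summability of $f(s,q,\cdot)$ over $B_s$ is equivalent to $\widetilde h(s,q,\cdot)\equiv 0$; applying the already-constructed uniform $h_3$ to $\widetilde h$ exhibits $\Int(f,S\times\RR_{>1})$ as a zero locus in $\cP^u(S)$. Boundedness is handled identically with $I'=\{i:b_{ij}>0,\text{ or }(b_{ij}=0\text{ and }a_{ij}>0)\text{ for some }j\}$.

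The main conceptual obstacle is cosmetic rather than substantial: one must check that no step in the original argument silently used the fact that $q$ is a fixed real number, e.g., in applying Lemmas \ref{lemqexp0} and \ref{lemqcon0}, in the rectilinearization, or in the gluing across pieces. The bound in Lemma \ref{lemqexp0} is a single constant depending only on $r$, so it is uniform in $q$; the vanishing/summability/boundedness dichotomies of Lemma \ref{lemqcon0} depend only on the sign pattern of the integer exponents; and Parametric Rectilinearization is a purely Presburger statement. Once these uniformities are recorded, the entire argument lifts verbatim from $\cP_q$ to $\cP^u$.
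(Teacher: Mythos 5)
Your proposal is correct and follows essentially the same route as the paper, which proves Theorems \ref{p2u}, \ref{p2'u}, and \ref{p1qu} in a single sentence by observing that Lemmas \ref{lemqexp0} and \ref{lemqcon0} are uniform in $q$ and that the proofs of Section \ref{sec:qfix} therefore carry over almost verbatim. You have simply made explicit the uniformity checks (rectilinearization being $q$-free, the bound in Lemma \ref{lemqexp0} depending only on $r$, the dichotomies in Lemma \ref{lemqcon0} depending only on the integer exponents) that the paper leaves to the reader.
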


\begin{thm}[Interpolation]\label{p2'u}
Let $f$ be in $\cP^u(S\times \ZZ^m)$ for some Presburger set $S$ and some $m\geq 0$.
Then there exists $g$ in $\cP^u(S\times \ZZ^m)$ such that $\Int(g,S\times \RR_{>1}) =  S\times \RR_{>1} $ and such that
$f(s,y,q)=g(s,y,q)$ whenever $(s,q)\in \Int(f,S\times \RR_{>1})$ and $y\in \ZZ^m$.
\end{thm}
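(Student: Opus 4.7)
The plan is to adapt the proof of Theorem~\ref{p2'} to the uniform setting, exploiting the fact that Parametric Rectilinearization (Theorem~\ref{paramrecti}) is a purely Presburger statement and hence independent of the base $q$. First I would partition $S\times\ZZ^m$ into finitely many Presburger pieces on which every Presburger function entering $f$ is linear, refine this partition via Theorem~\ref{paramrecti}, and work on a single piece $A$. Letting $\rho\colon A\to B$ be the resulting linear Presburger bijection over $S$ with $B_s=\Lambda_s\times\NN^\ell$, I can rewrite $f\circ \rho^{-1}$ in the canonical form
\begin{equation*}
(f\circ\rho^{-1})(s,y,q)=\sum_{i=1}^r c_i(s,q)\,y^{a_i}q^{b_i\cdot y},
\end{equation*}
where $a_i,b_i\in\ZZ^m$ are fixed integer tuples with $a_{ij}\geq 0$, the pairs $(a_i,b_i)$ are pairwise distinct, and the coefficients $c_i$ lie in $\cP^u(S)$. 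The crucial observation is that the exponents $a_i$, $b_i$ and the integer $\ell$ are independent of $q$.

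Next, letting $I:=\{i\mid b_{ij}\geq 0\text{ for some }j\in\{m-\ell+1,\ldots,m\}\}$ exactly as in the proof of Theorem~\ref{p2'}, I set
\begin{equation*}
g_B(s,y,q):=\sum_{i\in\{1,\ldots,r\}\setminus I}c_i(s,q)\,y^{a_i}q^{b_i\cdot y}
\end{equation*}
on $B$, transfer back through $\rho$, and extend by zero outside $A$; the final $g\in\cP^u(S\times\ZZ^m)$ is obtained by summing such contributions, weighted by characteristic functions of the pieces, over the full partition. Because $I$ is determined purely by the integer exponents, the same truncation works uniformly in $q$. For every $(s,q)\in S\times\RR_{>1}$, each remaining term has $b_{ij}<0$ for $j\in\{m-\ell+1,\ldots,m\}$, so $q^{b_{ij}}<1$ and the sum over $\Lambda_s\times\NN^\ell$ converges absolutely (the $\Lambda_s$-factor being finite). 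Hence $\Int(g,S\times\RR_{>1})=S\times\RR_{>1}$.

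Finally, for the interpolation property, fix $(s,q)\in\Int(f,S\times\RR_{>1})$; I claim that $c_i(s,q)=0$ for every $i\in I$. This is exactly the argument carried out in Theorem~\ref{p2'}: Lemma~\ref{lemqcon0}, applied to the \emph{real number} $q$, shows that for any set of indices whose pairs $(a_i,b_i)$ are distinct and with some $b_{ij}\geq 0$ in the $\NN^\ell$-coordinates, summability of the partial sum $\sum_{i\in I}c_i(s,q)\,y^{a_i}q^{b_i\cdot y}$ over $B_s$ forces all the coefficients to vanish. Combined with the already established summability of the complementary part, this gives the claim, so $f(s,y,q)=g(s,y,q)$ for every $y\in\ZZ^m$. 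The main obstacle is merely the bookkeeping of checking that the piecewise construction actually lands in $\cP^u(S\times\ZZ^m)$ rather than in some larger ring of $\RR$-valued functions; this reduces to the observation that characteristic functions of Presburger sets are Presburger constructible with uniform base, and that precomposition with linear Presburger bijections over $S$ preserves $\cP^u$, both of which are immediate from Definition~\ref{defu}.
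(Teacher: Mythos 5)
Your proposal is correct and follows essentially the same approach as the paper, which simply observes (in the combined proof of Theorems \ref{p2u}--\ref{p1qu}) that since Lemmas \ref{lemqexp0} and \ref{lemqcon0} are uniform in $q$ and Theorem \ref{paramrecti} is a purely Presburger statement, the proofs of Section \ref{sec:qfix} go through verbatim. You have merely made the key point explicit --- that the rectilinearization partition, the exponent tuples $(a_i,b_i)$, and hence the index set $I$ of truncated terms are all independent of $q$ --- which is exactly what makes the uniform transfer automatic.
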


\begin{thm}[Integration]\label{p1qu}
Let $f$ be in $\cP^u(S\times \ZZ^m)$ for some Presburger set $S$ and some $m\geq 0$. Then
there exists a function $g \in \cP^u(S)$ such that
$$
g(s,q) = \sum_{y\in\ZZ^m} f(s,y,q)
$$
whenever $(s,q)\in \Int(f,S\times \RR_{>1})$.
\end{thm}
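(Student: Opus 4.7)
The plan is to mirror the proof of Theorem \ref{p1q-new} in the uniform setting, combining the uniform interpolation Theorem \ref{p2'u} with a uniform analogue of the stability-under-summation result (Theorem-Definition 4.5.1 of \cite{CLoes}). First I would apply Theorem \ref{p2'u} to obtain $g_0 \in \cP^u(S\times\ZZ^m)$ with $\Int(g_0,S\times\RR_{>1}) = S\times\RR_{>1}$ and $g_0(s,y,q)=f(s,y,q)$ for every $(s,q) \in \Int(f,S\times\RR_{>1})$ and every $y \in \ZZ^m$. This immediately reduces the theorem to the special case where the locus of integrability is the whole of $S\times\RR_{>1}$: one only has to check that $g(s,q) := \sum_{y\in\ZZ^m} g_0(s,y,q)$ lies in $\cP^u(S)$.

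To establish that special case, I would run the same rectilinearization-based argument used in the proofs of Theorems \ref{p2} and \ref{p2'}, but keeping track of the parameter $q$. Since Parametric Rectilinearization (Theorem \ref{paramrecti}) is purely a statement about Presburger sets and functions, it does not see $q$ at all: it yields, after a finite partition and a piecewise linear Presburger reparametrization, pieces on which $g_0$ has the shape of the right hand side of (\ref{frecti}) with coefficients $c_i(s)$ now in $\cP^u$ and with the fibers $B_s$ of the form $\Lambda_s \times \NN^\ell$. On each such piece, integrability uniformly in $q>1$ is controlled by an obvious uniform strengthening of Lemma \ref{lemqcon0}: summability of $\sum_{y \in \NN^\ell} \sum_i c_i(s) y^{a_i} q^{b_i\cdot y}$ for every $q>1$ is equivalent to all the relevant exponents $b_{ij}$ being $\leq -1$, exactly as in the fixed-$q$ case, because the critical exponents do not depend on the value of $q>1$. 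The finite $\Lambda_s$-direction poses no integrability question and, after a further application of Theorem \ref{paramrecti} to replace $\Lambda_s$ by intervals $\{0 \leq z \leq a(s)\}$, reduces to summation of polynomials in $y$ against $q^{b\cdot y}$.

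What remains is then a concrete computation: each resulting sum over $\NN$ is an $\NN$-linear combination of derivatives (with respect to a formal $z$) of geometric series $\sum_{y\geq 0} z^y = 1/(1-z)$ evaluated at $z = q^b$ with $b \leq -1$, plus partial geometric sums over finite intervals depending on Presburger functions of $s$. The outputs are rational expressions in $q^b$ whose denominators are products of factors $1 - q^{-i}$ with $i \in \NN_{>0}$, times Presburger-polynomial factors in the interval lengths; these are precisely the elements one obtains by substituting $\LL = q$ into the ring $\AA$ introduced in Definition \ref{defu}. Hence the summed function lies in $\cP^u(S)$, finishing the proof.

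The main obstacle I anticipate is exactly this last bookkeeping step: one must verify that every closed-form summation formula one writes down lives uniformly in the specified algebra $\cP^u(S)$, i.e.\ that the rational constants produced by repeated differentiation of geometric series truly come from $\AA$ and not some larger subring of $\QQ(\LL)$. This is not deep, but it is the place where the design of $\AA$ --- in particular the inclusion of all inverses $1/(1-\LL^{-i})$ --- is essential; the rest of the argument is structural and parallels the fixed-base case verbatim.
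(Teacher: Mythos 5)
Your proof is correct and follows the same interpolation-first strategy as the paper: reduce to the everywhere-integrable case via Theorem \ref{p2'u}, then sum. Where the paper's one-line proof of Theorem \ref{p1qu} defers to the proof of Theorem \ref{p1q-new}, which in turn cites Theorem-Definition 4.5.1 of \cite{CLoes} for the summation step (a result already formulated over the ring $\AA$ in the $\LL$-variable, hence genuinely uniform in $q$), you instead spell out the self-contained route --- rectilinearize via Theorem \ref{paramrecti}, use the uniform Lemma \ref{lemqcon0} to force $b_{ij}\le -1$ on the unbounded directions, then evaluate geometric series and their derivatives and check the closed forms land in $\AA$. This is precisely the alternative proof the paper itself sketches immediately after the proof of Theorem \ref{p1q-new}, so your argument matches the paper in substance. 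One small point worth tightening: the equivalence "summable for all $q>1$ iff all $b_{ij}\le -1$" needs the coefficients $c_i(s,q)$ grouped by distinct tuples $(a_i,b_i)$ and the observation that any surviving nonzero coefficient attached to a bad exponent would contradict summability somewhere, which is how one justifies discarding the bad terms (equivalently, one can note directly that the $g_0$ produced by the construction in Theorem \ref{p2'u} is built by discarding exactly those terms).
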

\begin{proof}[Proof of Theorems \ref{p2u}, \ref{p2'u}, \ref{p1qu}]
Since the Lemmas \ref{lemqexp0} and \ref{lemqcon0} are completely uniform in $q$, the proofs of Section \ref{sec:qfix} go through almost literally the same way.
\end{proof}

\section{Integrability over a fixed $p$-adic field}\label{sec:qfixp}

In this section, we study the loci of Definition \ref{loci} and obtain similar results as in Section \ref{s:pres}, but now for functions on a finite degree field extension of $\QQ_p$.
We do this first in a setting without oscillation, and then in a setting where the functions may oscillate due to the presence of additive characters. The key technical result to control the difficulties related to oscillation is provided by Proposition \ref{intCLpexp}.

Let $K$ be a fixed finite field extension of $\QQ_p$ for a prime number $p$. Write $q_K$ for the number of elements in the residue field $k_K$ of $K$, and $\cO_K$ for the valuation ring of $K$ with maximal ideal $\cM_K$.
Fix $\cL_K$ to be either the semi-algebraic language on $K$ with coefficients from $K$, that is, Macintyre's language, or the subanalytic language on $K$ (as in e.g.~\cite{vdDHM} or \cite{Ccell}). Recall that Macintyre's language is the ring language $(\cdot,+,-,0,1)$ enriched with coefficients from $K$ and, for each integer $n>1$, a one variable predicate for the set of $n$-th powers in $K^\times$. The subanalytic language on $K$ is Macintyre's language enriched with the field inverse $^{-1}$ on $K^\times$ extended by $0^{-1}=0$, and for each convergent power series $f:\cO_K^n\to K$, a function symbol for the restricted analytic function
$$
x\in K^n\mapsto \begin{cases} f(x) & \mbox{ if }x\in\cO_K^n,\\
 0 & \mbox{ otherwise.}
    \end{cases}
$$
Note that one has quantifier elimination in $\cL_K$, by Macintyre's result \cite{Mac}, see also \cite{Denef2}, and by \cite{DvdD} for the subanalytic case.

Write $\varpi_K$ for a fixed uniformizer of $K$ and write $|\cdot|$ for the norm on $K$ with $|\varpi_K|=q_K^{-1}$. Put the normalized Haar measure on $K^n$, denoted by $|dx|$ whenever $x$ denotes a tuple of variables running over $K^n$, and where the normalization is such that $\cO_K^n$ has measure $1$.
For each integer $m>0$ consider the map $\ac_m:K\to \cO_K/ (\varpi_K^m)$ sending nonzero $x\in K$ to $\varpi_K^{-\ord x}\cdot x\bmod (\varpi_K^m)$ and sending $0$ to $0$. We also write $\ac$ for $\ac_1$.
To make the link with the motivic setting easier, we consider three sorted structures for our fixed $p$-adic field $K$. To this end, we enrich the language $\cL_K$ with the sorts $\ZZ$ for the value group, and $k_K$ for the residue field, together with the valuation map $\ord:K^\times\to \ZZ$ and the angular component map
$\ac$. Let us denote this three-sorted language by $\cL_K^3$.
Let us for each $m>1$ identify the map $\ac_m:K\to \cO_K/(\varpi_K^m)$ with a map $K\to k_K^m$, also denoted by $\ac_m$,
by using a bijection of finite sets $\cO_K/(\varpi_K^m)\to k_K^m$.

Endow $K^n\times k_K^m\times \ZZ^r$ with the product topology of the valuation topology on $K^n$ and the discrete topology on $k_K^m\times \ZZ^r$.
In this section, definable will mean $\cL_K^3$-definable.

\subsection{Constructible functions}

The ring of constructible functions $\cC(X)$ on a definable set $X$ is the $\AA_{q_K}$-algebra of real-valued functions on $X$ generated by functions of the form
\begin{enumerate}
\item $f:X\to\ZZ$ whenever $f$ is a definable function,

\item $q_K^g:X\to \AA_{q_K}:x\mapsto q_K^{g(x)}$ for definable functions $g:X\to \ZZ$.
\end{enumerate}
The functions in $\cC(X)$ are called constructible functions on $X$.

Now the analogues of Theorems \ref{p2}, \ref{p2'}, and \ref{p1q-new} hold in the $p$-adic setting.
In fact, the interest in the rings of constructible functions lies in their stability under integration, which we generalize to the following result.

\begin{thm}[Integration]
\label{p1p-new}
Let $f$ be in $\cC(X\times K^m)$ for some definable set $X$ and some $m\geq 0$. Then there exists $g\in \cC(X)$ such that
$$
g(x) = \int_{y\in K^m} f(x,y) |dy|
$$
whenever $x\in \Int(f,X)$.
\end{thm}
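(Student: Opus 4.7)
The plan is to mirror the proof of Theorem \ref{p1q-new}. First, I would produce an interpolating constructible function $g_{0}\in\cC(X\times K^{m})$ that agrees with $f$ on $\Int(f,X)\times K^{m}$ but satisfies $\Int(g_{0},X)=X$; then I would apply the classical $p$-adic stability of $\cC$ under integration (the $p$-adic analogue of Theorem-Definition 4.5.1 of \cite{CLoes}) to $g_{0}$ and obtain $g\in\cC(X)$ with $g(x)=\int_{K^{m}}g_{0}(x,y)\,|dy|$ for every $x\in X$. Since $g_{0}$ and $f$ coincide on $\Int(f,X)\times K^{m}$, this $g$ is the required function.

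The existence of such a $g_{0}$ is precisely the $p$-adic analogue of the interpolation Theorem \ref{p2'} alluded to in the paragraph preceding the statement. To prove it, I would apply a Denef--Pas style cell decomposition: refine $X\times K^{m}$ into finitely many cells so that on each cell $C$ there is, over $X$, a definable bijection $C\to X'\times k_{K}^{r}\times\ZZ^{s}$ under which the Haar measure on the $K^{m}$-fibres pulls back, up to a Jacobian factor of the form $q_{K}^{\alpha}$ with $\alpha$ a definable function, to the product of the counting measures on $k_{K}^{r}$ and $\ZZ^{s}$. Along this bijection $f$ becomes a constructible function whose fibrewise integrability over $K^{m}$ matches the fibrewise summability, over $\ZZ^{s}$, of an associated Presburger constructible function (finitely summed over $k_{K}^{r}$, which is no obstruction since $k_{K}$ is finite). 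Applying Theorem \ref{p2'} cell-by-cell and reassembling yields the required $g_{0}$.

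The main obstacle is the bookkeeping through the cell decomposition, and in particular verifying that $p$-adic $L^{1}$-integrability on each cell corresponds exactly to Presburger summability in the associated $\ZZ^{s}$-direction, so that the transported locus of integrability really matches $\Int(f,X)$. This correspondence is enabled by the fact that a ball of valuative radius $n$ contributes a factor $q_{K}^{-n}$ to the Haar measure, so that the analytic condition of integrability is entirely dictated by the geometry in the value-group sort, exactly as in Section \ref{s:pres}. Once this translation is set up, the analytic content of the theorem collapses onto the Presburger summability results already established, and the remainder is a routine assembly of the pieces produced cell-by-cell.
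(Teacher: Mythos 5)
Your proposal follows exactly the paper's strategy: interpolate $f$ by a function $g_0\in\cC(X\times K^m)$ with $\Int(g_0,X)=X$ (this is Theorem~\ref{p2'p}), and then apply the classical $p$-adic integration stability of $\cC$ (Theorem~4.2 of \cite{Ccell} for the subanalytic case, with Remark~\ref{remCcell} handling the semi-algebraic case) to $g_0$. Your sketch of the interpolation step also matches the paper's proof of Theorem~\ref{p2'p} via cell decomposition and skeleton maps; the only imprecision is that after cell decomposition the residual base still carries valued-field and residue-field parameters, so the interpolation you apply is really the $\cL_K^3$-version (Corollary~\ref{p2'pZ}, obtained from the Presburger Theorem~\ref{p2'} by the three-sorted Parametric Rectilinearization, Proposition~\ref{paramrectip}) rather than Theorem~\ref{p2'} itself.
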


\begin{remark}\label{remCcell}
Under the extra condition that $\Int(f,X)=X$, Theorem \ref{p1p-new} was known: in the  subanalytic case this is Theorem 4.2 of \cite{Ccell} and the semi-algebraic case has the same proof as in \cite{Ccell}, using the semi-algebraic cell decomposition instead of the subanalytic cell decomposition. The first form of this kind of integration result (with some more conditions on $f$) goes back to the work by Denef in \cite{Denef1}, where the functions of $\cC(X)$ were introduced under a different name.
\end{remark}

\begin{thm}[Correspondences of loci]\label{p2p}
Let $f$ be in $\cC(X\times K^m)$ for some definable set $X$ and some $m\geq 0$.
Then there exist functions $h_1,h_2$ and $h_3$ in $\cC(X)$ such that the zero loci of $h_i$ equal respectively
\begin{equation*}
\Int(f,X), \qquad
\Bdd(f,X), \qquad\text{and}\qquad
\Iva(f,X),
\end{equation*}
for $i=1$, $2$, resp.~$3$, and with the normalized Haar measure on $K^m$.
\end{thm}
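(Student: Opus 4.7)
The plan is to reduce Theorem \ref{p2p} to its Presburger counterpart, Theorem \ref{p2}, via cell decomposition in $\cL_K^3$ (Denef--Macintyre in the semi-algebraic setting, and the Denef--van den Dries subanalytic variant used in \cite{Ccell}). First, I would apply cell decomposition to $X \times K^m$, obtaining a finite partition into cells. By Lemma \ref{inters}, it suffices to produce the $h_i$ on each cell separately and then combine the results via products and sums of squares.

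On a single cell, the $K$-valued fiber variables $y$ admit a normal form $y_j = c_j(x,\xi) + \varpi_K^{\alpha_j(x,\xi)} z_j$, where $\alpha \in \ZZ^m$ collects valuations, $\xi \in k_K^b$ collects finitely many angular components (with $b$ controlled by the analytic data in $f$), and $z$ ranges over a product of balls of fixed normalized volume; after a further refinement I may assume that $f(x,y)$ depends only on $(x,\alpha,\xi)$. The Haar measure $|dy|$ then pulls back to $q_K^{-|\alpha|}$ times counting measure in $(\alpha,\xi)$ and the fixed Haar measure in $z$. Consequently, integrability, boundedness, and identical vanishing of $f(x,\cdot)$ on $K^m$ translate, for each fixed $\xi$, to the analogous properties over $\alpha \in \ZZ^m$ of the constructible function $q_K^{-|\alpha|}\widetilde{f}(x,\alpha,\xi)$ in the integrability case, and of $\widetilde{f}(x,\alpha,\xi)$ in the other two cases.

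At this stage, Theorem \ref{p2} applies to the $\alpha$-variable with $(x,\xi)$ playing the role of the parameter, yielding functions $h_i(\cdot,\xi) \in \cC(X)$ for each $\xi \in k_K^b$. Taking the intersection over the finite set $\xi \in k_K^b$ using Lemma \ref{inters} produces the required $h_i \in \cC(X)$. The proof of Theorem \ref{p2} relies only on parametric rectilinearization in $\alpha$ together with Lemmas \ref{lemqexp0} and \ref{lemqcon0}, all of which are insensitive to the nature of the parameter set; applying it with parameter in $X \times k_K^b$ rather than in a Presburger set is thus harmless, provided the coefficients (a priori in $\cP_{q_K}$) are allowed to lie in $\cC(X \times k_K^b)$ instead.

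The main obstacle is the bookkeeping: one must ensure that cell decomposition can be arranged so that the discrete invariants $\alpha$ and $\xi$ extracted from the $K^m$-fiber become definable functions of $(x,y)$ uniformly in $x$, and that the Presburger structural argument from the proof of Theorem \ref{p2} genuinely lifts to $\cC(X)$-coefficients without altering its steps. Modulo this, Theorem \ref{p2p} is a direct transfer of Theorem \ref{p2}.
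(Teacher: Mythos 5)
Your proposal follows essentially the route the paper takes: cell-decompose $X \times K^m$, reduce $f$ on each piece to a function of the valuation data (the paper phrases this as $f$ factoring through the skeleton map of a full cell, Definitions \ref{full} and \ref{skeleton}), weight by the fiber volume (the function $M = q_K^{\alpha_i}$ of Remark \ref{skel}), and then invoke the Presburger machinery with the remaining $\cL_K^3$-parameters. In all of these points your plan matches the paper's proof.

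What you flag at the end as the ``main obstacle'' is genuine and is exactly where the paper does nontrivial work; it is not bookkeeping that can be deferred. Theorem \ref{p2} as stated requires a Presburger base $S$, and its proof uses Theorem \ref{paramrecti} (Presburger Parametric Rectilinearization), so it cannot be invoked with $(x,\xi)$ ranging over an $\cL_K^3$-definable set. The paper bridges this in two stages: Proposition \ref{paramrectip} (Parametric Rectilinearization for $\cL_K^3$), proved via quantifier elimination of $K$-variables and orthogonality between the residue-field and value-group sorts, and then Corollary \ref{p2pZ}, which re-runs the proof of Theorem \ref{p2} with Proposition \ref{paramrectip} substituted for Theorem \ref{paramrecti}. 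Your alternative of fixing $\xi$ and intersecting over the finite set $k_K^b$ would also need the $h_i(\cdot,\xi)$ to depend definably on $\xi$ so that the combination lands in $\cC(X)$, which is exactly what folding $\xi$ into the $\cL_K^3$-parameter accomplishes. A minor point of formalism: in Definition \ref{def::cell} the angular component is a definable function of the base and not a free parameter, so the skeleton records only the valuation tuple $\alpha$; your $\xi$-variables are already part of the base $X$ after cell decomposition. None of this contradicts your plan, but a complete proof must actually establish the $\cL_K^3$-version of rectilinearization rather than treat it as transparent.
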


Theorem \ref{p2p} has the following corollary.
\begin{cor}\label{p2ploc}
Let $f$ be in $\cC(X\times K^m)$ for some definable set $X$ and some $m\geq 0$.
Then there exist functions $h_1$ and $h_2$ in $\cC(X)$ such that
\begin{equation}\label{p2eqploc}
\{x\in X\mid f(x,\cdot) \mbox{ is locally integrable on }K^m\} =  Z(h_1) 
\end{equation}
and
\begin{equation}\label{p2bploc}
\{x\in X\mid f(x,\cdot) \mbox{ is locally bounded on }K^m\} = Z(h_2). 
\end{equation}
\end{cor}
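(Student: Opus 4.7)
The plan is to reduce both claims to Theorem~\ref{p2p} by ``uniformizing'' the locality condition via a single valued-field parameter $z\in K$. First I would introduce the auxiliary function
\[
f''(x,z,y) := f(x,y)\cdot \chi_{B(z)}(y),
\]
where $B(z) := \{\,y \in K^m : \ord(y_i) \geq \ord(z) \text{ for all } i\,\}$, with the convention $\ord(0)=+\infty$ so that $B(0)=\{0\}$. The relation $\ord(y_i)\geq\ord(z)$ is $\cL_K^3$-definable in $(z,y_i)$, hence multiplication by the characteristic function of $B(z)$ preserves membership in the ring of constructible functions, so $f''\in\cC((X\times K)\times K^m)$.

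The key observation is that $f(x,\cdot)$ is locally integrable (resp.\ locally bounded) on $K^m$ if and only if, for every $z\in K$, the function $f''(x,z,\cdot)$ is integrable (resp.\ bounded) on $K^m$. This follows because the compact open balls $B(z)$ with $|z|$ ranging over powers of $q_K$ form a cofinal exhausting family of compact subsets of $K^m$, so locality is equivalent to integrability (resp.\ boundedness) of $f$ on every such $B(z)$; the ``all $z\in K$'' quantifier adds no strength, since $B(z)\subset \cO_K^m$ whenever $|z|\leq 1$.

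Next, applying Theorem~\ref{p2p} to $f''$ viewed as a constructible function on $(X\times K)\times K^m$ yields $H_1,H_2\in\cC(X\times K)$ with
\[
Z(H_1)=\Int(f'',X\times K), \qquad Z(H_2)=\Bdd(f'',X\times K).
\]
By the equivalence above, the local integrability (resp.\ local boundedness) locus of $f$ in $X$ is then exactly $\Iva(H_1,X)$ (resp.\ $\Iva(H_2,X)$). A second invocation of Theorem~\ref{p2p}, this time the identical-vanishing clause applied to $H_1$ and $H_2$ as elements of $\cC(X\times K)$ (taking $m=1$), produces the sought functions $h_1,h_2\in\cC(X)$. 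I do not foresee any substantive obstacle: the entire technical content is packaged into Theorem~\ref{p2p}, and the only trick needed is the choice of the parameter $z$, which lets a single valued-field quantifier simulate the ``on every compact subset'' condition defining locality.
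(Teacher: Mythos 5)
Your proof is correct and takes essentially the same route as the paper: both reduce local integrability/boundedness to plain integrability/boundedness over a definable family of compact neighborhoods, apply the $\Int$/$\Bdd$ clauses of Theorem~\ref{p2p} with the extra ball parameter, and then invoke the $\Iva$ clause to eliminate that parameter. The only cosmetic difference is that the paper parameterizes general Cartesian products of balls by radius and center while you use the single parameter $z$ giving $z\cO_K^m$, which is an equally valid cofinal family.
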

\begin{proof}
Note that local integrability (and similarly for local boundedness) for a function $r$ on $K^n$ is equivalent to $\11_B\cdot r$ being integrable over $K^n$ (resp.~bounded on $K^n$) for each Cartesian product $B\subset K^n$ of balls in $K$, with characteristic function $\11_B$. Note that the family of all balls can be (possibly redundantly) realized as the members of a definable family (parameterized by, say, the radius and an element of the ball). Now the Corollary follows from the three statements of Theorem \ref{p2p}, where the existence of $h_3$ is used to eliminate the variables that were used to parameterize the balls.
\end{proof}

\begin{thm}[Interpolation]\label{p2'p}
Let $f$ be in $\cC(X\times K^m)$ for some definable set $X$ and some $m\geq 0$.
Then there exists $g$ in $\cC(X\times K^m)$ with $\Int(g,X)=X$ and such that
$f(x,y)=g(x,y)$ whenever $x$ lies in $\Int(f,X)$.
\end{thm}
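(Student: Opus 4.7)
The plan is to reduce Theorem \ref{p2'p} to its Presburger counterpart Theorem \ref{p2'} via $p$-adic cell decomposition, in parallel with the way the integration result Theorem \ref{p1p-new} is reduced to Theorem \ref{p1q-new} (cf.~Remark \ref{remCcell}). Here $\cL_K^3$-cell decomposition plays exactly the role that parametric rectilinearization (Theorem \ref{paramrecti}) plays in the proof of Theorem \ref{p2'} given in Section \ref{s:pres}.

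First, I would apply $\cL_K^3$-cell decomposition \cite{Denef1,Denef2,Ccell,DvdD}, compatibly with $f$, to partition $X\times K^m$ into finitely many cells $A$, each equipped with a definable bijection $\rho_A : A \to A'$ over $X$, where $A'\subset X\times k_K^{s}\times \ZZ^{r}$ is definable. After refining by the finitely many values of the residue-field coordinates $\xi\in k_K^s$, one arranges that on each piece
\[
F_{A,\xi}(x,n):=(f\circ \rho_A^{-1})(x,\xi,n)\cdot q_K^{-j_A(x,\xi,n)}
\]
is a Presburger constructible function of $n\in\ZZ^r$, uniformly in $x\in X$, where $q_K^{-j_A}$ is the Jacobian factor produced by the change of variables from the Haar measure on $A_x$ to the counting measure on $(A')_{x,\xi}$. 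By that change of variables, $x\in \Int(f,X)$ holds if and only if, for every cell $A$ and every residue-field piece $\xi$, the family $\{F_{A,\xi}(x,n)\}_{n\in\ZZ^r}$ is summable.

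Second, I would apply the Presburger interpolation Theorem \ref{p2'} to each $F_{A,\xi}$, treating $x$ as a parameter. To realise the parameter space as a Presburger set, I further decompose $X$ into cells so that $X$ itself is parameterised by residue-field and integer data, and then fix $\xi$. In this form Theorem \ref{p2'} supplies a Presburger constructible interpolation $G_{A,\xi}$ with $\Int(G_{A,\xi},\cdot)$ equal to the whole parameter space and with $G_{A,\xi}=F_{A,\xi}$ on the integrability locus of $F_{A,\xi}$. I then undo the Jacobian by multiplying $G_{A,\xi}$ by $q_K^{j_A}$, pull back through $\rho_A^{-1}$, and glue over cells $A$ and over residue pieces $\xi$ using the characteristic functions of the (definable) cells, which are constructible. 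The resulting $g\in \cC(X\times K^m)$ satisfies $\Int(g,X)=X$, and $g(x,y)=f(x,y)$ whenever $x\in\Int(f,X)$, because $L^1$-integrability over $K^m$ decomposes as the simultaneous summability on each Presburger piece.

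The main obstacle will be arranging a single cell decomposition that simultaneously exhibits $f$ as a Presburger constructible function in the integer coordinates uniformly in the $x$- and $\xi$-parameters, preserves the $x$-fibered integrability locus under the reduction, and produces a cleanly factored Jacobian of the form $q_K^{-j_A}$. Each of these features belongs to the standard $p$-adic cell decomposition toolkit for $\cL_K^3$-constructible functions in the sense of \cite{Denef1,Ccell}, but their simultaneous, uniform realisation---together with the careful pull-back and gluing back to the $p$-adic side---is where the bulk of the technical work lies.
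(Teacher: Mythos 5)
Your plan and the paper's proof share the same architecture: use $p$-adic cell decomposition to reduce the $K^m$-integral to summability over a $\ZZ$-valued skeleton, observe (Remark~\ref{skel}) that the volume of each skeleton fiber is either $0$ or $q_K^{\alpha}$ for a definable $\alpha$ (your Jacobian factor $q_K^{-j_A}$), invoke a Presburger-style interpolation, divide out the Jacobian, and glue the pieces back. In the paper this is packaged using ``full cells'' and ``skeleton maps'' (Definitions~\ref{full} and~\ref{skeleton}), but the content is the same.

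The one step that does not survive closer scrutiny is the claim that one can ``further decompose $X$ into cells so that $X$ itself is parameterised by residue-field and integer data'' and then apply the purely Presburger Theorem~\ref{p2'}. When $X$ contains valued-field coordinates, no finite cell decomposition turns $X$ into a subset of $k_K^r\times\ZZ^s$: the parameter $x\in X$ retains genuine $p$-adic components, and the coefficients $c_i(x)$ that show up after rectilinearization are honest constructible functions of a $p$-adic variable, not elements of any $\cP_q(S)$. Moreover, if one were to replace $X$ by the image of some definable $\nu:X\to k_K^t\times\ZZ^s$, the interpolant produced by Theorem~\ref{p2'} would factor through $\nu$, whereas the theorem demands $g\in\cC(X\times K^m)$ agreeing with $f$ on $\Int(f,X)\times K^m$. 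What the paper does instead is isolate the intermediate Corollary~\ref{p2'pZ}, an interpolation statement for $\cC(X\times\ZZ^m)$ with $X$ an \emph{arbitrary} $\cL_K^3$-definable set. Its proof reruns the proof of Theorem~\ref{p2'}, replacing the Presburger rectilinearization (Theorem~\ref{paramrecti}) by its $\cL_K^3$-version (Theorem~\ref{paramrectip}); the latter uses quantifier elimination of valued-field variables to factor the \emph{fiber structure} $B_y$ over $\ZZ^m$ through $\ZZ$- and $k_K$-valued data, while leaving the coefficients as constructible functions of $x$. The drop-the-bad-exponents construction in the proof of Theorem~\ref{p2'} is insensitive to what the coefficients $c_i(\cdot)$ are, so it works verbatim in that enlarged setting. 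You should state and use such a $\cC(X\times\ZZ^m)$-interpolation result (Corollary~\ref{p2'pZ}) rather than try to force $X$ into the Presburger world.
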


The above results will be proved using the Cell Decomposition Theorem \ref{cd} below and the analogous results of Section \ref{sec:qfix}, but first we
state the main $p$-adic results in the exponential setting, which will have completely different and more difficult proofs.

\subsection{Constructible exponential functions}\label{osc}

Fix an additive character $\psi_K:K\to \CC^\times$ which is trivial on $\cM_K$ but nontrivial on $\cO_K$. (All characters are assumed to be unitary and continuous.)
The ring of constructible exponential functions $\cCexp(X)$ on a definable set $X$ is the $\AA_{q_K}$-algebra of complex-valued functions on $X$ generated by functions of the form
\begin{enumerate}
\item $g$ with $g$ in $\cC(X)$;

\item functions $\psi_K(f):X\to \CC:x\mapsto \psi_K(f(x))$ for any definable function $f:X\to K$.
\end{enumerate}
The functions in $\cCexp(X)$ are called constructible exponential functions on $X$.


\begin{thm}[Integration]
\label{p1pexp}
Let $f$ be in $\cCexp(X\times K^m)$ for some definable set $X$ and some $m\geq 0$. Then there exists $g\in \cCexp(X)$ such that
$$
g(x)=\int_{y\in K^m} f(x,y) |dy|
$$
for all $x\in \Int(f,X)$.
\end{thm}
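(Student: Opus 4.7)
The plan is to mimic the strategy used for the non-exponential case (Theorem~\ref{p1p-new}): reduce to the situation where the integrand is integrable in $y$ for every $x\in X$, and then invoke the already-known stability of $\cCexp$ under parametric integration from \cite{CLexp} (the exponential analogue of Theorem-Definition~4.5.1 of \cite{CLoes}, which is the special case where $\Int(f,X)=X$).

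Concretely, I would first establish an exponential interpolation result parallel to Theorem~\ref{p2'p}: given $f\in\cCexp(X\times K^m)$, produce $g_0\in\cCexp(X\times K^m)$ with $\Int(g_0,X)=X$ and $g_0(x,y)=f(x,y)$ for all $x\in\Int(f,X)$ and all $y\in K^m$. To build $g_0$, the natural approach is to apply the Cell Decomposition Theorem (Theorem~\ref{cd}) to bring $f$ on each cell to a normal form as a finite sum of products of constructible functions in $(x,y)$ with characters $\psi_K(h(x,y))$ for definable $h$. Then, cell by cell, separate the terms that contribute a finite integral in $y$ from those that do not, keeping the former and discarding the latter. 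Gluing the pieces back together yields $g_0\in\cCexp(X\times K^m)$ which agrees with $f$ wherever $f$ was already integrable in $y$, and whose own $y$-slices are integrable everywhere on $X$.

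Once such a $g_0$ is in hand, apply the existing integration result of \cite{CLexp} (the case $\Int(g_0,X)=X$ of the present statement, which is already known) to obtain $g\in\cCexp(X)$ with
\[
g(x) \;=\; \int_{y\in K^m} g_0(x,y)\,|dy| \qquad\text{for all }x\in X.
\]
Since $g_0=f$ on $\Int(f,X)\times K^m$, we then have $g(x)=\int_{K^m} f(x,y)\,|dy|$ for every $x\in\Int(f,X)$, which is the desired conclusion.

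The main obstacle is the interpolation step. In the non-oscillatory setting (Theorems~\ref{p2'} and~\ref{p2'p}), once $f$ is in Presburger/cell normal form, each monomial in the normal form is either summable/integrable or not, and one simply drops the non-integrable monomials; the output is manifestly in $\cP_q$ or $\cC$. In the oscillatory setting this monomial-by-monomial dichotomy can fail: distinct exponential terms can combine to produce extra cancellation, so that the sum may be integrable while individual terms are not. The tool designed to handle precisely this pathology is Proposition~\ref{intCLpexp}, which (in a controlled sense) decouples the oscillatory part from the underlying definable structure and guarantees that the set where the integral exists is cut out by definable conditions that can be realized inside $\cCexp$. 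Using that proposition, one can partition $X$ (and the relevant cells) into pieces on which either the oscillation contributes a tractable cancellation or the various non-integrable pieces can be separated definably and set to zero; this is what makes the construction of $g_0$ go through and, with it, the proof of the theorem.
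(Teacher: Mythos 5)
Your strategy---interpolate $f$ by an everywhere-integrable $g_0$ and then cite the existing integration theorem for $\cCexp$---is precisely the route the paper takes. Your interpolation step is what the paper packages as Theorem~\ref{p2'pexp}, and you have correctly identified Proposition~\ref{intCLpexp} as the key tool that tames oscillatory cancellation and makes that interpolation possible. So the architecture is right.

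The one imprecision you should be careful about is the phrase \emph{``apply the existing integration result of \cite{CLexp} (the case $\Int(g_0,X)=X$ of the present statement, which is already known).''} That is not in fact what \cite{CLexp} proves. As the paper notes just after stating Theorem~\ref{p1pexp}, Theorem 8.6.1~(1) of \cite{CLexp} requires a strictly stronger hypothesis than $\Int(g_0,X)=X$: the function must be presentable as a finite sum of terms $f_0\psi_K(f_1)$ where \emph{each individual} $f_0\in\cC(X\times K^m)$ already satisfies $\Int(f_0,X)=X$. Global integrability of a sum of oscillatory pieces does not by itself give termwise integrability---that is exactly the cancellation pathology you flag---so the two conditions genuinely differ. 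This is why the paper's interpolation Theorem~\ref{p2'pexp} is stated to deliver not merely some $g$ with $\Int(g,X)=X$, but $g$ explicitly in the normal form $\sum f_0\psi_K(f_1)$ with each $\Int(f_0,X)=X$; only with that extra structure does the \cite{CLexp} integration theorem apply. Your cell-by-cell construction of $g_0$ (keep the integrable pieces, drop the rest) would, when carried out via Proposition~\ref{intCLpexp} and Corollary~\ref{corHX} as in the paper, produce exactly this normal form, so the gap is one of citation rather than of construction---but you need to state and use the stronger output of the interpolation step, or the final appeal to \cite{CLexp} does not go through.
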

In \cite{CLexp}, Theorem \ref{p1pexp} is proved under an extra 
restriction: $f$ must be a finite sum of terms of the form $f_0\psi_K(f_1)$ with $f_1:X\times K^m\to K$ definable and $f_0\in \cC(X\times K^m)$ satisfying $\Int(f_0,X)=X$ (see also \cite{HK} for a similar result under a similar extra restriction).

We also find analogues of Theorems \ref{p2p} and \ref{p2'p} in the exponential setting.

\begin{thm}[Correspondences of loci]\label{p2pexp}
Let $f$ be in $\cCexp(X\times K^m)$ for some definable set $X$ and some $m\geq 0$.
Then there exist functions $h_1,h_2$ and $h_3$ in $\cCexp(X)$ such that
\begin{equation*}
\Int(f,X) = Z(h_1), 
\end{equation*}
\begin{equation*}
\Bdd(f,X) = Z(h_2), 
\end{equation*}
and
\begin{equation*}
\Iva(f,X) = Z(h_3). 
\end{equation*}
\end{thm}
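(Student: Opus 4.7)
The plan is to prove the three correspondences simultaneously by induction on $m$, following the blueprint of the non-exponential Theorem~\ref{p2p} but inserting Proposition~\ref{intCLpexp} as the key new tool to control oscillation. For the inductive step, I would reduce to $m=1$: after identifying the (ideally definable) $y_m$-locus of integrability, the inner integral over $y_m$ lies in $\cCexp(X \times K^{m-1})$ by Theorem~\ref{p1pexp}, and iterating collapses the problem to a single integration variable. The $m=1$ case is the substantive one.

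For $m=1$, I would apply $\cL_K^3$-cell decomposition to partition $X \times K$ so that on each cell $A$, after re-parameterizing $y$ via $\alpha = \ord(y - c(x))$ and $\xi = \ac_N(y - c(x))$ for a definable center $c$ and appropriate $N$, the restriction of $f$ takes a standard form
$$
f_A(x,y) = \sum_{i=1}^{r} c_i(x,\alpha,\xi)\,\psi_K(h_i(x,y)),
$$
with $c_i \in \cC$ and $h_i: X \times K \to K$ definable. The identical-vanishing locus is the easiest of the three: by separating the $h_i$ by residue-field levels and using the nontriviality of $\psi_K$ on $\cO_K/\cM_K$, $f_A$ vanishes identically in $y$ iff the matching $c_i$ vanish, which is a definable condition in $x$. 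This produces $h_3$ cell by cell, and Lemma~\ref{inters} (whose proof from products and sums-of-squares carries over verbatim to $\cCexp$, using absolute squares on the complex side) assembles the per-cell zero loci into a single $h_3 \in \cCexp(X)$.

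The boundedness and integrability loci then require combining the per-cell analysis with Theorem~\ref{p2p} applied to a suitable non-oscillatory envelope, but only in the regime where oscillatory cancellation does not genuinely interfere. This is precisely where Proposition~\ref{intCLpexp} enters: it yields a definable partition of the parameter space separating a ``resonant'' regime, in which the oscillatory sum collapses to a non-oscillatory constructible function to which Theorem~\ref{p2p} applies directly, from a ``truly oscillatory'' regime, in which the standard bounds on character sums and integrals of $\psi_K$ over cosets of $\cM_K$ reduce boundedness and integrability to constructible conditions on the $c_i$. Both regimes are then handled by Theorem~\ref{p2p}, producing $h_1$ and $h_2$ on each piece, after which Lemma~\ref{inters} assembles the global functions.

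The main obstacle is the integrability locus in the presence of genuine oscillatory cancellation: $f$ can be $L^1$-integrable even when the naive envelope $\sum |c_i|$ is not, so one cannot majorize $|f|$ by its non-oscillatory part. Proposition~\ref{intCLpexp} is exactly what makes the cancellation sets themselves definable, ensuring that $\Int(f, X)$ is cut out by a function from the tame class $\cCexp(X)$ and not merely by some larger a priori class; without this, the argument would fail in the $\cCexp$ setting even though it works routinely in $\cC$.
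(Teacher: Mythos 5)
Your proposed reduction to $m=1$ by integrating out $y_m$ via Theorem~\ref{p1pexp} has a genuine gap. $L^1$-integrability of $f(x,\cdot)$ over $K^m$ is a statement about $|f|$, but the inner integral $\int_K f\,dy_m$ that Theorem~\ref{p1pexp} produces does not carry the information of $\int_K |f|\,dy_m$ (which is what Tonelli requires), and $|f|$ is not itself in $\cCexp$. So passing to $\cCexp(X\times K^{m-1})$ by an inner integral loses exactly the $L^1$ data you need. The paper does not do this; the $m$-induction lives entirely inside the proof of Proposition~\ref{intCLpexp} (where one composes the maps $\varphi_1, \varphi_2$ rather than integrating), and Theorem~\ref{p2pexp} is proved for arbitrary $m$ in one stroke.

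Your deployment of Proposition~\ref{intCLpexp} is also not quite what the proposition provides. It does not produce a ``definable partition into a resonant and a truly oscillatory regime''; instead it writes $f=\sum_i G_i(\varphi(x,y))\psi_K(h_i(x,y))$ and gives, fiberwise in $(x,r)$, a lower bound $\Vol(U_{x,r})\leq q_K^d\Vol(W_{x,r})$ on the measure of the set $W_{x,r}$ where $|f|\geq\sup_i|G_i(\varphi)|$. The paper's actual argument is short: set $H_i := G_i\circ\varphi$ and observe that $\Int(f,X)=\bigcap_i\Int(H_i,X)$, $\Bdd(f,X)=\bigcap_i\Bdd(H_i,X)$, $\Iva(f,X)=\bigcap_i\Iva(H_i,X)$. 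One inclusion is trivial; the other uses that each $|H_i(x,\cdot)|$ is constant on $U_{x,r}$ and dominated by $|f(x,\cdot)|$ on the non-negligible set $W_{x,r}$, hence dominated on all of $U_{x,r}$ up to the factor $q_K^d$. This is where the volume inequality does all the work, and it handles $\Iva$ as well: if $f(x,\cdot)\equiv 0$, then $H_i(x,\cdot)$ vanishes on $W_{x,r}$, so by constancy on $U_{x,r}$ it vanishes identically --- no separate residue-field-level analysis of the $h_i$ is needed. Crucially, $|H_i|=|G_i\circ\varphi|$ is non-oscillatory: via the isomorphism $\cCexp(V)\cong\cCexp(X)\otimes_{\cC(X)}\cC(V)$, the $H_i$ may be taken in $\cCexp(X)\otimes_{\cC(X)}\cC(X\times K^m)$, and one then applies Corollary~\ref{corHX} (the parametrized version of Theorem~\ref{p2p}, with $\cH(X)=\cCexp(X)$), not Theorem~\ref{p2p} itself, to each $H_i$. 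Finally Corollary~\ref{interspexp} (which, note, requires Lemma~\ref{conju} to construct complex conjugates inside $\cCexp$, not sums-of-squares) assembles the finite intersections into single zero loci. Your overall instinct --- cell decomposition plus Proposition~\ref{intCLpexp} to tame oscillation plus the non-oscillatory result --- is the right one, but the two concrete steps above are the ones that make the argument go through and are missing from the proposal.
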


\begin{thm}[Interpolation]\label{p2'pexp}
Let $f$ be in $\cCexp(X\times K^m)$ for some definable set $X$.
Then  there exists $g$ in $\cCexp(X\times K^m)$ with $\Int(g,X)=X$ and such that
$f(x,y)=g(x,y)$ whenever $x$ lies in $\Int(f,X)$. Moreover, one can write any such $g$ as a finite sum of terms of the form
$$
f_0\psi_K(f_1)
$$
with $f_1:X\times K^m\to K$ definable and $f_0\in \cC(X\times K^m)$ satisfying $\Int(f_0,X)=X$.
\end{thm}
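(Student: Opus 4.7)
The plan is to reduce Theorem \ref{p2'pexp} to its non-exponential counterpart, Theorem \ref{p2'p}, by combining a $p$-adic cell decomposition adapted to the phases of $f$ with the key technical Proposition \ref{intCLpexp}, whose role is precisely to control how oscillation interacts with definable conditions.

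First, write $f = \sum_{j=1}^{N} f_{0,j}\,\psi_K(h_j)$ with $f_{0,j} \in \cC(X \times K^m)$ and $h_j : X \times K^m \to K$ definable. Apply a cell decomposition of $X \times K^m$ adapted to the data $(f_{0,j}, h_j)$ so that on each cell $C$ the phases $h_j$ take a normal form (linear or constant in the appropriate rectilinear coordinates, modulo adjustments absorbed into the coefficients $f_{0,j}$ using that $\psi_K$ is trivial on $\cM_K$). The crucial input is Proposition \ref{intCLpexp}: in this normal form, after regrouping all terms with coinciding reduced phase into a single coefficient, distinct remaining phases $\psi_K(h'_j)$ on a cell cannot conspire to produce ``spurious'' integrability. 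Concretely, after this regrouping, the restriction of $\Int(f, X)$ to the image of $C$ coincides with the intersection $\bigcap_j \Int(f'_{0,j}, X)$ of the loci of integrability of the regrouped coefficients.

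Once this decoupling is available, the construction proceeds cell-by-cell: apply Theorem \ref{p2'p} to each regrouped coefficient $f'_{0,j}$ to obtain $g'_{0,j} \in \cC(X \times K^m)$ with $\Int(g'_{0,j}, X) = X$ and $g'_{0,j} = f'_{0,j}$ on $\Int(f'_{0,j}, X)$. Then set $g := \sum_{C,j} \mathbf{1}_C \cdot g'_{0,j} \cdot \psi_K(h'_j)$. Since characteristic functions of definable cells lie in $\cC$, we have $g \in \cCexp(X \times K^m)$; by the decoupling established in the previous step, $g = f$ whenever $x \in \Int(f, X)$; and $\Int(g, X) = X$ since each summand is uniformly integrable in $X$. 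This explicit presentation already has the form demanded by the ``moreover'' clause. For that clause applied to an arbitrary $g \in \cCexp(X \times K^m)$ with $\Int(g, X) = X$, I would run the same normal-form procedure on $g$ itself; the hypothesis $\Int(g, X) = X$ then forces, via the decoupling, each coefficient in the resulting representation to be uniformly integrable in $X$.

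The main obstacle is arranging the normal form so that oscillatory cancellation between distinct phases cannot cause integrability of the sum without integrability of the (regrouped) pieces. This is exactly the content that Proposition \ref{intCLpexp} must deliver: without it one could not exclude the scenario in which $f(x,\cdot)$ is integrable for certain $x$ purely through oscillatory cancellation among individually non-integrable terms, and the term-by-term interpolation via Theorem \ref{p2'p} would then be unable to reproduce $f$ on $\Int(f,X)$.
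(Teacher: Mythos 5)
Your high-level strategy matches the paper's: invoke Proposition \ref{intCLpexp} to obtain a decomposition of $f$ for which the locus of integrability decouples into an intersection of loci for the individual coefficients, then interpolate each coefficient by the non-oscillatory result and reassemble. There is, however, a genuine gap at the interpolation step: the ``regrouped coefficients'' produced by Proposition \ref{intCLpexp} do \emph{not} lie in $\cC(X\times K^m)$, so Theorem \ref{p2'p} cannot be applied to them directly. Proposition \ref{intCLpexp} rewrites $f$ as $\sum_i G_i(\varphi(x,y))\,\psi_K(h_i(x,y))$ where $G_i\in\cCexp(V)$ for a definable surjection $\varphi:X\times K^m\to V\subset X\times\ZZ^t$; the resulting coefficients $H_i=G_i\circ\varphi$ are exponential constructible, not constructible. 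One cannot simply push the residual oscillation in $G_i$ into the phases and recover $\cC$-valued coefficients for which the decoupling still holds: the decoupling $\Int(f,X)=\bigcap_i\Int(H_i,X)$ relies on $|H_i(x,\cdot)|$ being constant on each $U_{x,r}$, a property that is lost after any further splitting of the $H_i$ into $\cC$-pieces. Conversely, the naive regrouping you describe, by ``coinciding reduced phase,'' keeps the coefficients in $\cC(X\times K^m)$ but is not strong enough to preclude $y$-dependent oscillatory cancellation among the remaining phases, which is exactly the phenomenon Proposition \ref{intCLpexp} exists to control.

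The paper bridges this by noting that $K$-valued definable functions depend only piecewise trivially on $\ZZ$-variables, whence the isomorphism $\cCexp(V)\cong\cCexp(X)\otimes_{\cC(X)}\cC(V)$ of $\cC(X)$-algebras; the exponential part of each $H_i$ thus depends on $x$ only. This places $H_i$ in $\cCexp(X)\otimes_{\cC(X)}\cC(X\times K^m)$, and the interpolation step then goes through via Corollary \ref{corHX} with $\cH(X)=\cCexp(X)$, which is the extension of Theorem \ref{p2'p} to exactly this tensor algebra. Swapping your invocation of Theorem \ref{p2'p} for Corollary \ref{corHX}, and correcting $f'_{0,j}\in\cC(X\times K^m)$ to $f'_{0,j}\in\cCexp(X)\otimes_{\cC(X)}\cC(X\times K^m)$, repairs the argument; the ``moreover'' clause of the theorem is then obtained from the ``moreover'' clause of Corollary \ref{corHX} by expanding each factor from $\cCexp(X)$ as a finite sum $\sum_l a_l\psi_K(b_l)$ with $a_l\in\cC(X)$ and $b_l:X\to K$ definable, and absorbing the $\psi_K(b_l)$ into the phases.
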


Theorem \ref{p2pexp} implies the following corollary by the same reasoning as for Corollary \ref{p2ploc}.
\begin{cor}\label{p2plocexp}
Let $f$ be in $\cCexp(X\times K^m)$ for some definable set $X$ and some $m\geq 0$.
Then there exist functions $h_1$ and $h_2$ in $\cCexp(X)$ such that
\begin{equation}\label{p2eqplocexp}
\{x\in X\mid f(x,\cdot) \mbox{ is locally integrable on }K^m\} = Z(h_1) 
\end{equation}
and
\begin{equation}\label{p2bplocexp}
\{x\in X\mid f(x,\cdot) \mbox{ is locally bounded on }K^m\} = Z(h_2). 
\end{equation}
\end{cor}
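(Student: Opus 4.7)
The plan is to imitate the proof of Corollary \ref{p2ploc} verbatim, with Theorem \ref{p2p} replaced by its exponential counterpart Theorem \ref{p2pexp}. The only thing one needs is that both $\Iva$ and $\Bdd$ (resp.~$\Int$) statements of Theorem \ref{p2pexp} are now available in the exponential setting, which is exactly what is granted.

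First I would recall the standard reformulation of local integrability: a function $r : K^m \to \CC$ is locally integrable (resp.~locally bounded) if and only if $\mathbf{1}_B \cdot r$ is integrable over $K^m$ (resp.~bounded on $K^m$) for every Cartesian product of balls $B \subset K^m$. The family of all such products of balls can be realized (possibly with redundancy) as the fibers $B_\lambda \subset K^m$ of a single definable family parameterized by some definable set $\Lambda$; for example take $\lambda = (c,n) \in K^m \times \ZZ^m$, with $c$ encoding the centers and $n$ the radii of the coordinate balls. The characteristic function $\mathbf{1}_{B_\lambda}(y)$ is then a definable function of $(\lambda,y)$, hence lies in $\cCexp(\Lambda \times K^m)$.

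Next I would form the auxiliary function
$$
F(x,\lambda,y) := \mathbf{1}_{B_\lambda}(y) \cdot f(x,y) \;\in\; \cCexp(X \times \Lambda \times K^m),
$$
and apply Theorem \ref{p2pexp} with base set $X \times \Lambda$ and integration variable $y \in K^m$. This yields functions $H_1, H_2 \in \cCexp(X \times \Lambda)$ with
$$
Z(H_1) = \Int(F, X\times \Lambda), \qquad Z(H_2) = \Bdd(F, X\times \Lambda).
$$
By the reformulation, a point $x \in X$ belongs to the locus of local integrability (resp.~local boundedness) of $f(x,\cdot)$ if and only if $H_1(x,\lambda) = 0$ (resp.~$H_2(x,\lambda) = 0$) for \emph{every} $\lambda \in \Lambda$, i.e., if and only if $H_i(x,\cdot)$ is identically zero on $\Lambda$.

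Finally I would invoke the $\Iva$ statement of Theorem \ref{p2pexp} a second time, applied to $H_1$ and $H_2$ as constructible exponential functions on $X \times \Lambda$, to produce $h_1, h_2 \in \cCexp(X)$ with $Z(h_i) = \Iva(H_i, X)$. By the previous paragraph these zero loci are exactly the two sets in \eqref{p2eqplocexp} and \eqref{p2bplocexp}, and the corollary follows. No step is genuinely an obstacle once Theorem \ref{p2pexp} is granted; the only mild point of care is to check that products of balls can be swept out by a single definable family in $\cL_K^3$, which is immediate in both the semi-algebraic and subanalytic settings.
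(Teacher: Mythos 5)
Your proposal is correct and matches the paper's own proof, which simply says the corollary follows from Theorem \ref{p2pexp} ``by the same reasoning as for Corollary \ref{p2ploc}.'' The steps you spell out---reformulating local integrability/boundedness via characteristic functions of products of balls, parameterizing those products as a definable family, applying Theorem \ref{p2pexp} over $X\times\Lambda$, and then using the $\Iva$ part of the same theorem to eliminate the ball parameters---are exactly what the paper's argument for Corollary \ref{p2ploc} does, transposed verbatim to the exponential setting.
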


The following key technical Proposition excludes strange oscillatory behavior of exponential constructible functions.
This will allow us to reduce to the techniques and results from the previous sections.

\begin{prop}
\label{intCLpexp}
Let $m\geq 0$ and $s\geq 0$ be integers,
let $X$ and $U \subset X \times K^{m}$ be definable,
and let $f_1, \dots, f_s$
be in $\cCexp(U)$. Write $x$ for variables running over $X$ and $y$ for variables running over $K^m$.
Then there exists an integer $d\geq 0$, a definable surjection $\varphi:U\to V\subset X\times \ZZ^{t}$ over $X$ for some $t\geq 0$, definable functions $h_{\ell i}:U\to K$, and functions $G_{\ell i}$ in $\cCexp(V)$ for $\ell=1, \dots, s$,
such that the following conditions hold.
 \begin{itemize}
 \item[1)] For each $\ell$ with $1\le\ell\le s$,  one has $$f_\ell(x,y) = \sum_{i=1}^{N_\ell}G_{\ell i}(\varphi(x,y))\psi_K(h_{\ell i}(x,y)),$$
 for some positive integer $N_\ell$;
 \item[2)] if one sets, for $(x,r)\in V$,
$$
U_{x,r} := \{y\in U_x\mid \varphi(x,y)=(x,r)\}
$$
 and
 $$
 W_{x,r}:=\{y \in U_{x,r}\mid
 \sup_{\ell,i} | G_{\ell i}(x,r)|_{\CC} \leq \sup_{\ell} |f_\ell(x,y)|_{\CC} \},
 $$
where $|\cdot|_\CC$ is the complex modulus, then
\[
\Vol(U_{x,r}) \leq q_K^d \cdot \Vol(W_{x,r}) < +\infty,
\]
where the volume $\Vol$ is taken with respect to the Haar measure on $K^m$.
\end{itemize}
\end{prop}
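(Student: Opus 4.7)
The plan is to reduce the proposition to an explicit estimate on cells after applying a preparation theorem for constructible exponential functions. First, I would apply a cell decomposition adapted to $U$ and to all the definable data inside the $f_\ell$, in the spirit of \cite{CLexp}. This partitions $U$ into finitely many definable cells. On each cell, every $f_\ell$ takes the standard form
\begin{equation*}
f_\ell(x, y) = \sum_{i=1}^{N_\ell} G_{\ell i}(x, r)\,\psi_K(h_{\ell i}(x, y)),
\end{equation*}
where $(x, r)$ parameterizes the cell via a definable map to some $V \subset X \times \ZZ^t$ (with residue-field coordinates absorbed into $\ZZ^t$), each $G_{\ell i}$ lies in $\cCexp(V)$, and each $h_{\ell i}$ is a definable $K$-valued function. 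The map $\varphi$ is the resulting projection $U \to V$, and each fiber $U_{x, r}$ is (a translate of) a ball of finite volume.

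Next I would normalize the presentation. Whenever two phases $h_{\ell i}$ and $h_{\ell j}$ satisfy that $(h_{\ell i} - h_{\ell j})|_{U_{x, r}}$ is constant modulo $\cO_K$ on every fiber, the ratio $\psi_K(h_{\ell i})/\psi_K(h_{\ell j})$ is a definable constant in $(x, r)$ and the two terms may be merged into one. After finitely many such collapses, we may assume that for $i \neq j$ the phase difference $h_{\ell i} - h_{\ell j}$ is non-constant modulo $\cO_K$ on every fiber.

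The main step, and the main obstacle, is the volume estimate. Subdivide each fiber $U_{x, r}$ into smaller balls $B$ at the finest scale output by the preparation theorem, on which each $h_{\ell i}(x, \cdot)$ linearizes modulo $\cO_K$. On such a $B$, $\psi_K(h_{\ell i})$ is either a constant in $\CC$ or a non-trivial character of the additive group of $B$. Expanding
\begin{equation*}
\int_B |f_\ell|^2\,|dy| = \sum_{i, i'} G_{\ell i}(x, r)\,\overline{G_{\ell i'}(x, r)} \int_B \psi_K(h_{\ell i} - h_{\ell i'})\,|dy|,
\end{equation*}
the normalization forces the off-diagonal terms to vanish or be small by character orthogonality, yielding a lower bound $\int_B |f_\ell|^2\,|dy| \geq q_K^{-d_1}\,(\max_i |G_{\ell i}(x, r)|)^2\,\Vol(B)$ for some $d_1 = d_1(N_\ell)$. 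A Chebyshev-type argument then produces a subset of $B$ of volume at least $q_K^{-d_2}\,\Vol(B)$ on which $|f_\ell(y)| \geq \max_i |G_{\ell i}(x, r)|$. Aggregating over the balls $B$ inside $U_{x, r}$ and over $\ell$ (observing that $W_{x, r}$ only requires $\sup_\ell |f_\ell|$ to exceed the threshold, so one $\ell$ suffices) yields $\Vol(U_{x, r}) \leq q_K^d \Vol(W_{x, r})$. The delicate point is the uniform choice of scale across $(x, r)$: the balls $B$ must be fine enough for phases to linearize yet coarse enough for non-constant phase differences to produce genuine oscillation, and this uniformity is precisely what the preparation theorem provides.
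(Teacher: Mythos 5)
The proposal starts from the right general strategy---cell decomposition to standardize the presentation, then an oscillatory estimate on each fiber---and correctly pinpoints the central difficulty, the mismatch of scales. But the analytic engine you propose, an $L^2$/Chebyshev estimate, cannot close the argument, and the scale problem is flagged but not actually resolved.

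The quantitative issue is this. The proposition requires the \emph{exact} non-strict threshold $\sup_{\ell,i}|G_{\ell i}|_\CC \le \sup_{\ell}|f_\ell|_\CC$ on a set of relative volume $\ge q_K^{-d}$. Suppose you were in the best case, with perfect orthogonality of the phases on a ball $B$, so that $\int_B |f_\ell|^2 = \sum_i |G_{\ell i}|^2 \Vol(B) \ge M^2\Vol(B)$ where $M=\max_i|G_{\ell i}|_\CC$. Set $S=\{y\in B : |f_\ell(y)|_\CC\ge M\}$ and use $|f_\ell|_\CC\le N_\ell M$. Then
$M^2\Vol(B)\le N_\ell^2 M^2 \Vol(S) + M^2(\Vol(B)-\Vol(S))$, which simplifies to $0\le (N_\ell^2-1)M^2\Vol(S)$---vacuous. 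Lowering the threshold to $\alpha M$ with $\alpha<1$ does give a set of definite proportion, but then you have $|f_\ell|\ge\alpha M$, not $|f_\ell|\ge M$, which is strictly weaker than what the proposition asserts. The paper does not use $L^2$ at all: it groups the summands so that the grouped partial sums $|f_{\ell j}|_\CC$ become \emph{locally constant} on each ball $B_\xi = \xi+\gamma_{x,r}\cO_K$, which reduces the estimate to finite Fourier analysis on $\FF_{q_K}$. The Fourier inversion bound of Lemma~\ref{four} and Corollary~\ref{linear}---$\max_j|c_j|\le\|\widehat{c}\|_\infty$, which holds with constant $1$---supplies a single residue $\tilde y_0$ where the threshold is met, and local constancy promotes that single point to a sub-ball of volume exactly $\Vol(B_\xi)/q_K$. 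This is where the clean factor $q_K$ comes from.

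The second gap is the scale issue you acknowledge at the end. Your normalization ensures that each phase difference $h_{\ell i}-h_{\ell j}$ is non-constant modulo $\cO_K$ on the whole fiber; it does \emph{not} ensure non-constancy on the small balls $B$ at ``the finest scale,'' because different differences oscillate at different orders. A difference varying only at a coarse scale will be constant modulo $\cO_K$ on $B$, and its off-diagonal term does not vanish, so the orthogonality you invoke fails. There is no single scale that is simultaneously fine enough for linearization and coarse enough for all phases to oscillate. The paper resolves this with an induction on $N=\sum_\ell(N_\ell-1)$: after choosing $\gamma_{x,r}$ according to the fastest-oscillating phase, one groups indices $i$ by the residue of $\gamma_{x,r}b_{x,r,\ell,i}$ (relation~(\ref{res-equal})). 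The group sums $f_{\ell j}$ form a new family with strictly more functions but no more total summands, so $N$ decreases; the induction hypothesis handles each group internally, and the finite Fourier lemma compares across groups where the residues genuinely differ. This inductive device is the missing piece that makes the multi-scale structure tractable; without it the appeal to a ``uniform choice of scale'' does not produce a correct argument.
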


Roughly, the proposition for $s=1$ says that, if $|f_1|_\CC$ is small, then $f_1$ is the sum of small terms of a very specific form.
Indeed, for the functions
$|G_{1 i}(\varphi(x,\cdot))\psi_K(h_{1 i}(x,\cdot))|_{\CC}=|G_{1 i}(\varphi(x,\cdot))|_{\CC}$ to be small, it suffices to know that they are small on the sets $W_{x,r}$, since they are constant on each superset $U_{x,r}$.
More precisely, if $f_1$ can not be written as a sum of small terms
as in 1), then $|f_1|_{\CC}$ has to be large on a relatively large set, namely on the set $W_{x,r}$.

For the proposition to make sense, the sets $U_{x,r}$ and $W_{x,r}$ have to be
measurable, but this follows from the facts that each definable set is measurable, functions in $\cCexp(Z)$ are measurable for any definable $Z$, and, that the space of measurable functions is closed under
taking the complex modulus and the supremum.
We prove this proposition in the next section. First, we need to set up some preliminaries.

\subsection{Preliminaries for the $p$-adic proofs}\label{prelimp}
We give a notion of $p$-adic cells which is adapted to the three sorts in $\cL_K^3$ and which fits better
with the motivic approach below than some previously used notions of $p$-adic cells, see especially the usage of $\xi$ in the next definition.

 \begin{defn}[$p$-adic cells]\label{def::cell}
Let $Y$ be a definable set.
A $1$-cell $A\subset Y\times K$ over $Y$ is a (nonempty) set of
the form
 \begin{eqnarray}\label{cel}
A & = & \{(y,t)\in Y'\times K\mid \alpha(y) \sq_1 \ord(t-c(y)) \sq_2 \beta(y), \\
& &  \ord (t-c(y))\in a+n\ZZ,\
  \ac_m (t-c(y)) = \xi(y) \}, \nonumber
\end{eqnarray}
with $Y'$ a definable subset of $Y$,
integers $a\geq 0$, $n>0$, $m>0$, $\alpha,\beta\colon Y'\to \ZZ$ and $\xi:  Y'\to  (\cO_K/ (\varpi_K^m))^\times$ definable, $c\colon Y'\to K$ definable, and $\sq_i$ either $<$ or no
condition, and such that $A$ projects surjectively onto $Y'$.
We call $c$ the center, $\xi$ the angular component,
$a+n\ZZ$ the coset, $\alpha$ and $\beta$ the boundaries,
and $Y'$ the
base of $A$. A $0$-cell $A\subset Y\times K$ over $Y$ is a (nonempty) set of
the form
\begin{eqnarray}\label{0cel}
A & = & \{(y,t)\in Y'\times K\mid t=c(y)\},
\end{eqnarray}
with $Y'$ a definable subset of $Y$,
and $c\colon Y'\to K$ definable.
In both cases we call $A$ a cell over $Y$ with center $c$.
\end{defn}

Also our formulation of the cell decomposition result is somehow more relaxed than usual, due to having three sorts.
For a slightly stronger cell decomposition result than Theorem \ref{cd} and references, see e.g.~\cite{CCLLip}, Theorem 3.3.
\begin{thm}[$p$-adic Cell Decomposition \cite{Denef2}, \cite{Ccell}]\label{cd}
Let $X\subset Y\times K$ and $f_j:X\to \ZZ$ be definable for some definable set $Y$ and $j=1,\ldots,r$. Then there exists a finite partition of $X$ into
cells $A_i$ (over $Y$) with center $c_i$ such that for each occurring $1$-cell $A_i$ with coset
$a_i+n_i\ZZ$ and base $Y'_i$ one has
 \begin{equation*}
 f_j(y,t)=
 h_{ij}(y) + a_{ij} \frac{ \ord (t-c_i(y))- a_i}{n_i} ,\quad
 \mbox{ for each }(y,t)\in A_i,
 \end{equation*}
with integers $a_{ij}$ and
$h_{ij}:Y'_i\to \ZZ$ definable functions for $j=1,\ldots,r$. Moreover, if also $g_j\in\cC(X)$ are given for $j=1,\ldots,r'$, then the cells $A_i$ can be taken such that, for each $y\in y$, the function $g_i(y,\cdot)$ is constant on each ball contained in $A_{i y}$.
 \end{thm}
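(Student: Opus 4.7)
The plan is to invoke the existing cell decomposition of Denef \cite{Denef2} in the semi-algebraic case and of \cite{Ccell} in the subanalytic case, and then refine the resulting partition to accommodate the extra data $f_j$ and $g_j$. By quantifier elimination in $\cL_K$, the set $X$ and the graphs of the $f_j$ are quantifier-free definable, so the standard cell decomposition applies to produce a finite partition of $X$ into cells of the form described in Definition \ref{def::cell}. The key additional input compared to the bare partition statement is that the parameters $n$, $m$, and the coset $a + n\ZZ$ on each piece may have to be enlarged/refined as we add more definable data to control.

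The affine-linear description of the $f_j$ on each $1$-cell is obtained as follows. After producing an initial cell decomposition adapted to $X$, consider one cell $A$ over $Y'$ with center $c$. On $A$ the only thing the variable $t$ contributes is the pair $(\ord(t - c(y)), \ac_m(t-c(y)))$, since $\ac_m(t-c(y))$ is fixed on $A$ and the only remaining freedom is the valuation. Thus each definable $\ZZ$-valued function $f_j$ restricted to $A$ factors through the map $(y,t) \mapsto (y, \ord(t-c(y)))$, landing in the value-group sort. Now the Presburger quantifier-elimination (used in Section \ref{s:pres}) applied fibrewise to the graph of this function forces it, after a further partition according to the congruence class of $\ord(t-c(y))$ modulo a suitable $n_i$, to become affine in $(\ord(t-c(y)) - a_i)/n_i$ with coefficient in $\ZZ$ and a $y$-dependent constant term $h_{ij}(y)$; this yields the claimed form. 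One carries out this refinement simultaneously for $j = 1,\ldots,r$ by replacing $n_i$ with a common multiple.

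The moreover clause about $g_j \in \cC(X)$ is handled by applying the previous step to each definable $\ZZ$-valued function entering the expression of $g_j$ as a polynomial in functions of type (1) and (2) defining $\cC(X)$. After that, on each resulting $1$-cell $A_i$, $g_j(y,t)$ is a fixed $\AA_{q_K}$-linear combination of terms that depend on $t$ only through $\ord(t - c_i(y))$. A ball $B \subset A_{iy}$ has the shape $c_i(y) + u + \varpi_K^N \cO_K$ for some $u$ and some $N$; as soon as $N$ is large enough (which we can force by enlarging $n_i$ and $m_i$ and further partitioning), $\ord(t - c_i(y))$ is constant as $t$ ranges over $B$, hence so is $g_j(y,\cdot)$.

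The main technical obstacle is hidden in the initial cell decomposition in the subanalytic case, which requires Denef--van den Dries preparation for analytic functions in one variable over $Y$; this is however exactly what is established in \cite{Ccell}, and once it is available the rest of the argument is an essentially formal combination of it with Presburger analysis on the value group sort.
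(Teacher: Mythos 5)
Theorem~\ref{cd} is presented in the paper as a cited result: the semi-algebraic case is attributed to \cite{Denef2}, the subanalytic case to \cite{Ccell}, and the paper explicitly points to \cite{CCLLip}, Theorem~3.3, for a slightly stronger version. There is no in-paper proof to compare against, so what follows assesses your sketch as a reconstruction of how those sources yield the statement.

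Your overall plan is the right one and matches what the cited works do: run cell decomposition adapted to $X$ and the data defining the $f_j$ (Denef--Pas/Macintyre style preparation in the semi-algebraic case, Denef--van den Dries preparation in the subanalytic case), observe that on a cell a $\ZZ$-valued definable function factors through $(y,\ord(t-c(y)))$, and then apply Presburger quantifier elimination in the value-group sort, refining the coset modulus to a common multiple to handle $j=1,\dots,r$ simultaneously. For the ``moreover'' clause, reducing to the building blocks $\alpha$ and $q_K^\beta$ of $\cC(X)$ and noting they factor through $(y,\ord(t-c_i(y)))$ is also correct.

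The one substantive imprecision is in your treatment of the ``moreover'' clause, where you write that $N$ is ``large enough (which we can force by enlarging $n_i$ and $m_i$ and further partitioning).'' This misidentifies the mechanism. Given an arbitrary ball $B=c_i(y)+u+\varpi_K^N\cO_K$ with $B\subset A_{iy}$ for a $1$-cell $A_i$, the largeness of $N$ is not something one arranges by refining the cell; it is automatic. Since $m_i\geq 1$ in Definition~\ref{def::cell}, the fiber $A_{iy}$ is a disjoint union of the leaves $\{t: \ord(t-c_i(y))=v,\ \ac_{m_i}(t-c_i(y))=\xi_i(y)\}$, and any ball meeting two distinct leaves would, by the ultrametric inequality, contain the center $c_i(y)$, which does not lie in $A_{iy}$. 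Hence any ball contained in $A_{iy}$ lies inside a single leaf, on which $\ord(t-c_i(y))$ is constant; in particular $N\geq \ord(u)+m_i$ holds for free. No further partitioning is required, and refining $m_i$ does not change this. The conclusion you draw is correct, but the reason you give for it is not; you should replace the ``force $N$ large'' step with the observation that balls contained in a $1$-cell fiber cannot straddle two leaves because the center is excluded.
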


For $X\subset K$ open, a function $f\colon X\to K$ is called $C^1$ if $f$ is differentiable at each point of $X$ and the derivative $f'\colon X\to K$ of $f$ is continuous. (This notion of $C^1$, although more naive than the ones in e.g.~\cite{Glock}, suffices for our purposes.)
 A ball in $K$ is by definition a set of the form
$
\{t \in K\mid \ord ( t - a ) \geq z\}
$
for some $a\in K $ and some $z \in \ZZ$. For $X$ a subset of $K$, by a maximal ball contained in $X$ we mean a ball $B\subset X$ which is maximal with respect to inclusion among all balls contained in $X$.

\begin{defn}[Jacobian property]\label{defjacprop}
Let $f\colon B_1\to B_2$ be a function with
$B_1,B_2\subset K$. Say that $f$
\textit{has the Jacobian property} if the following
conditions a) up to d) hold:
\begin{itemize}

\item[a)] $f$ is a bijection from $B_1$ onto $B_2$ and $B_1$ and $B_2$ are balls in $K$;

\item[b)] $f$ is $C^1$ on $B_1$ with nonvanishing derivative $f'$;

\item[c)] $|f'|$ is constant on $B_1$;

\item[d)] for all $x,y\in B_1$ one has
$$
|(x-y)\cdot f'|=|f(x)-f(y)|.
$$
\end{itemize}
\end{defn}

\begin{defn}[$1$-Jacobian property]\label{def1jacprop}
Let $f\colon B_1\to B_2$ be a function with
$B_1,B_2\subset K$. Say that $f$
\textit{has the $1$-Jacobian property} if $f$ has the Jacobian property
and moreover e) and f) hold:
\begin{itemize}
\item[e)] $\ac(f')$ is constant on $B_1$;

\item[f)] for all $x,y\in B_1$ one has
$$
\ac (f')\cdot \ac(x-y)=\ac(f(x)-f(y)).
$$
\end{itemize}
\end{defn}

The following two results will be important for the proofs in the exponential setting.

\begin{prop}[\cite{CLip}, Section 6]\label{jacprop}
Let $Y$ and $X\subset Y\times K$ be definable sets and let
$F:X\to K$ be a definable function.
Then there exists a finite partition of $X$ into cells $A_i$ over $Y$ such that for each $i$
and each $y\in Y$, the restriction of
$F(y,\cdot)$
to $A_{iy}:=\{t\in K\mid (y,t)\in A_i\}$
is either constant or injective, and such that in the latter case,
for each
ball $B\subset K$ such that $\{y\}\times B$ is contained in $A_i$, there is a ball
$B^* \subset K$
such that $F(y, B) = B^*$ and such that the map
$$
F_B:B\to B^*:t\mapsto F(y,t)
$$
has the $1$-Jacobian property.
 \end{prop}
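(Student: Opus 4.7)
The plan is to combine the $p$-adic cell decomposition of Theorem~\ref{cd} with a preparation-theorem analysis of one-variable definable functions, as carried out in \cite{CLip}. First I would apply Theorem~\ref{cd} to $X \subset Y \times K$ with auxiliary data built from $F$: the partial derivative $\partial_t F$ in the second variable, its valuation $\ord(\partial_t F)$, its angular component $\ac_m(\partial_t F)$ for a suitable $m$, and the valuations of a finite set of higher partial derivatives $\partial_t^k F$. Since the derivative of a definable function is itself definable (both in Macintyre's language and in the subanalytic language), this data is admissible, and the resulting cells $A_i$ satisfy: either $\partial_t F \equiv 0$ on a fiber $A_{iy}$, in which case $F(y,\cdot)$ is constant on that fiber, or $|\partial_t F|$ and $\ac(\partial_t F)$ are constant and nonzero on each $A_{iy}$.

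The heart of the proof is the analysis of a ball $B$ with $\{y\} \times B \subset A_i$ in the non-constant case. The cell decomposition can be arranged so that on each such ball the linear term of the Taylor expansion strictly dominates every higher-order term: for all $t_1, t_2 \in B$ and all $k \geq 2$,
\[
\bigl| (t_1 - t_2)^k\,\partial_t^k F(y,t_2)/k! \bigr| < \bigl| (t_1 - t_2)\,\partial_t F(y,t_2) \bigr|.
\]
The ultrametric inequality applied to the Taylor expansion of $F(y,t_1)$ around $t_2$ then yields $|F(y,t_1) - F(y,t_2)| = |(t_1-t_2)\,\partial_t F|$, giving parts a), c), d) of the Jacobian property; part b) holds by the choice of cell, and f) follows from the same expansion by passing to residues, since the higher-order terms have strictly smaller norm than the linear term and therefore vanish after applying $\ac$. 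Part e) is immediate from the constancy of $\ac(\partial_t F)$. Global injectivity of $F(y,\cdot)$ on the whole fiber $A_{iy}$ (not merely on each maximal ball) is then obtained by a further partition along the definable equivalence relation $F(y,t) = F(y,t')$, and the identity $F(y,B)=B^*$ for a ball $B^*$ follows from the Jacobian identity together with a Hensel-type surjectivity argument.

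The hard part will be producing cells on which the dominance of the linear term holds uniformly on every ball contained in the cell. This is where the preparation theorem of \cite{CLip} is essential: any definable one-variable function admits, after a finite definable partition, a prepared form of type $F(y,t) = u(y,t) \cdot (t - c(y))^{\alpha(y)}$, with $u$ a ``strong unit'', from which the valuations of all higher derivatives are directly comparable with that of $\partial_t F$ on each cell. In the semi-algebraic setting this rests on Macintyre's quantifier elimination, and in the subanalytic setting on Weierstrass preparation. Once such a prepared form is produced, feeding the valuations of finitely many derivatives into Theorem~\ref{cd} yields the cells required by the proposition.
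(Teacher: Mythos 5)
This proposition is not proved in the paper at all; it is imported verbatim with the citation ``\cite{CLip}, Section~6,'' so there is no in-paper argument to compare yours against. What follows is therefore an assessment of your sketch on its own terms, against the background of the standard $p$-adic cell-decomposition techniques the paper builds on.

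Your overall strategy --- Taylor-expand $F(y,\cdot)$ on each ball, force the linear term to dominate via preparation and a cell decomposition that tracks $\ord(\partial_t F)$ and $\ac(\partial_t F)$, read off a)--f) of the $1$-Jacobian property from the ultrametric inequality and residue reduction, and use Hensel surjectivity to see $F(y,B)=B^*$ --- is the right one and matches how such statements are established in the literature. But there is one step that is actually false as stated and signals a structural issue with the order of operations. You claim that if $\partial_t F\equiv 0$ on a cell fiber $A_{iy}$ then $F(y,\cdot)$ is constant on $A_{iy}$. This fails because a $1$-cell fiber is a disjoint union of infinitely many balls (indexed by the admissible values of $\ord(t-c(y))$), and vanishing derivative only yields constancy ball-by-ball. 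A concrete counterexample: $t\mapsto \varpi_K^{\,\ord t}$ on $\{t:\ord t>0\}$ has identically vanishing derivative, is definable, yet is neither constant nor injective on the fiber. So your dichotomy ``constant or injective,'' which the proposition requires globally on $A_{iy}$, cannot be read off from $\partial_t F$ alone.

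Relatedly, your fix of ``further partition along the definable equivalence relation $F(y,t)=F(y,t')$'' is not automatic: the fibers of that equivalence relation are not cells, and naively refining by it can destroy the cell structure and the ball-wise properties you already arranged. In the setting of the paper (and of \cite{CCLLip}, Cor.~3.7, which is exactly the tool the authors invoke for this kind of step in the proof of Lemma~\ref{avoidSkol}), one typically establishes the global constant-or-injective decomposition into cells \emph{first}, as a standalone preparation lemma, and only afterwards imposes the Jacobian property on the injective pieces --- rather than trying to derive global injectivity from the local Jacobian data as an afterthought. With the steps reordered and the constancy criterion replaced by the correct one (piecewise constant-or-injective from preparation, not from $\partial_t F=0$), the remainder of your sketch --- linear dominance via the prepared form $u(y,t)(t-c(y))^{\alpha(y)}$, ultrametric, reduction modulo $\cM_K$ for e)--f), and Hensel for surjectivity --- is sound.
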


\begin{lem}\label{avoidSkol}
Let $A\subset Y\times K$ and $h:A\to K$ be definable for some definable set $Y$. Suppose that for each $y\in Y$, and for each maximal ball $B$ contained in $A_y$, $h(y,\cdot)$ is constant modulo $(\varpi_K)$ on $B$. Then there exists a finite partition of $A$ into definable sets $A_j$ and definable functions $h_{j}:Y\to K$ such that
$$
|h(y,t)-h_{j}(y)| \leq 1
$$
holds for each $(y,t)\in A_j$ and for each $j$.
\end{lem}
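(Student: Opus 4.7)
The plan is to reduce the problem to a definable Skolem selection via iterated $p$-adic cell decomposition. First I would apply Theorem~\ref{cd} to $A\subset Y\times K$ relative to the definable function $\ord h : A \to \ZZ$, invoking the ``moreover'' clause with constructible auxiliary functions encoding the angular component $\ac\circ h$ (via characteristic functions of its finitely many possible residue values). This produces a finite partition of $A$ into cells $A_i$ over $Y$ with centers $c_i(y)$, on each $1$-cell of which $\ord h$ is affine linear in $\ord(t-c_i(y))$ and $\ac\circ h$ is constant on every maximal ball of $A_{i,y}$.

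On each $0$-cell $\{(y,c_i(y)):y\in Y'_i\}$ the conclusion is immediate, with $h_i(y):=h(y,c_i(y))$. On each $1$-cell $A_i$ with center $c_i$, coset $a_i+n_i\ZZ$, angular component $\xi_i$ and angular-component precision $m_i$, the fiber $A_{i,y}$ is a disjoint union of maximal balls
\[
B_v(y)=\{t\in K : \ord(t-c_i(y))=v,\ \ac_{m_i}(t-c_i(y))=\xi_i(y)\}
\]
of radius $q_K^{-(v+m_i)}$, indexed by admissible valuations $v$. I would pick a definable Skolem section $y\mapsto t_i(y)\in A_{i,y}$ --- for instance, the canonical point of the maximal ball of least admissible valuation --- and set $h_i(y):=h(y,t_i(y))$. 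Since the hypothesis gives $h(y,t)\equiv h(y,t_v)\pmod{\varpi_K}$ whenever $t,t_v$ lie in the same maximal ball $B_v(y)$, it remains to establish the cross-ball bound $h(y,t_v)-h_i(y)\in\cO_K$ for every admissible $v$.

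For this last step I would refine the partition further via the Jacobian-property Proposition~\ref{jacprop} applied to $h$ on $A_i$, so that $h$ satisfies the identity $|h(y,t_1)-h(y,t_2)|=|t_1-t_2|\cdot|h'|$ on every ball inside each sub-cell. On $B_v(y)$ the hypothesis forces $|h'|\le q_K^{v+m_i-1}$; translating this through the affine structure of $\ord h$ in $\ord(t-c_i(y))$ provided by the cell decomposition constrains the slope of $\ord h$ enough that the cumulative variation $h(y,t_v)-h(y,t_{v'})$, for adjacent admissible valuations $v,v'$, stays inside $\cO_K$. A telescoping argument along the admissible valuations (which form a Presburger subset of the coset $a_i+n_i\ZZ$) then gives the desired cross-ball bound.

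The main obstacle is this telescoping step, since the admissible valuations can form a possibly unbounded Presburger subset of $\ZZ$, so the variation is a priori the sum of unboundedly many terms. The boundedness of the cumulative variation ultimately comes from the slope constraint on $\ord h$ that the Jacobian identity and the hypothesis together impose; keeping the affine structure of $\ord h$ and the angular-component data compatible throughout the repeated cell refinements (and handling the endpoints of the valuation range carefully) is the delicate technical point.
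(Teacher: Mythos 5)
You take a domain-side route: cell-decompose $A$, choose a definable Skolem point in each fiber, and control the cross-ball variation of $h$ by a telescoping argument grounded in the Jacobian property. The paper instead works on the image side. After preparing so that $h(y,\cdot)$ is fiberwise injective (Corollary 3.7 of \cite{CCLLip}), satisfies the $1$-Jacobian property (Proposition~\ref{jacprop}), and sends distinct maximal balls of $A_y$ to balls of distinct radii (Theorem~\ref{cd}), one cell-decomposes the image $W\subset Y\times K$ of $(y,t)\mapsto (y,h(y,t))$. The hypothesis together with the Jacobian property and the distinct-radii preparation force every maximal ball of $W_y$ to have radius at most $q_K^{-1}$; hence for a cell $C_j$ of $W$ with center $c_j$ and angular-component precision $m_j$, each maximal sub-ball $\{w:\ord(w-c_j(y))=v,\ \ac_{m_j}(w-c_j(y))=\xi_j(y)\}$ has radius $q_K^{-(v+m_j)}\le q_K^{-1}$, which gives $|h(y,t)-c_j(y)|=q_K^{-v}\le q_K^{m_j-1}$ outright. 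The cell center $c_j$ is then the required $h_j$, after a further finite partition using the residue field when $m_j>1$. No comparison of $h$-values across distinct maximal balls of $A_y$ is ever made.

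The gap in your plan sits exactly where you flag it, and I do not see how to bridge it by the route you sketch. The Jacobian property is a within-ball statement: the identity $|h(y,t_1)-h(y,t_2)|=|t_1-t_2|\cdot|h'|$ is asserted only for $t_1,t_2$ lying in a common ball \emph{contained in} $A_y$, and for $t_1,t_2$ in distinct maximal balls of a $1$-cell the smallest ball containing both is never inside $A_y$. So your derivative bound $|h'|\le q_K^{v+m_i-1}$ gives no handle on $h(y,t_v)-h(y,t_{v'})$; in fact one can add to $h$ any function constant on each maximal ball of $A_{i,y}$ without changing $h'$ or the hypothesis, while changing the cross-ball differences arbitrarily (subject to definability). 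The affine expression for $\ord h$ in $\ord(t-c_i(y))$ controls $|h(y,t_v)|$, but bounding the modulus does not bound differences, and there is no a priori favorable sign on the slope. Finally, your Skolem selection (the maximal ball of least admissible valuation) need not exist, since the admissible valuations in a $1$-cell may be unbounded below. The image-side argument avoids all of these difficulties simultaneously by only ever comparing $h$-values to the definable center furnished by cell decomposition of $W$.
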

\begin{remark}
We state the lemma in its present form and prove it in such a way that makes it easy to adapt to the motivic case below.
\end{remark}
\begin{proof}[Proof of Lemma~\ref{avoidSkol}]
The proof is similar to the proof of Theorem 2.2.2 of \cite{CLexp} and goes as follows. Up to a finite partition of $A$, we may suppose that for each $y\in Y$, the function $h(y,\cdot)$ is injective (see e.g.~Corollary 3.7 of \cite{CCLLip}). Similarly we may suppose that $h$ is as $F$ in the conclusion of Proposition \ref{jacprop} already on the whole of $A$. We may moreover for each two balls $\{y\}\times B_1$ and $\{y\}\times B_2$ contained in $A$ assume that the images $h(\{y\}\times B_1)$ and $h(\{y\}\times B_2)$ are balls with different radii, for example by invoking Theorem \ref{cd}.
Now consider the graph of $h$ in $A\times K$, and its image $W\subset Y\times K$ under the coordinate projection sending $(y,t,h(y,t))$ to $(y,h(y,t))$.
Take a cell decomposition as in Theorem \ref{cd} of $W$ into cells $C_j$.
Write $c_j$ for the center of $C_j$. Let us fix $m_j>0$ such that the angular component of $C_j$ (i.e., the map denoted by $\xi$ in Definition~\ref{def::cell}) takes values in $\cO_K/ (\varpi_K^{m_j})$ in the case that $C_j$ is a $1$-cell, and such that $m_j=1$ if $C_j$ is a $0$-cell. Now it follows for all $(y,t)\in A$ with $h(y,t)\in C_j$ that
$$
|h(y,t)-c_j(y)| \leq q_K^{m_j-1}.
$$
Let $A_j$ be the set consisting of $(y,t)\in A$ with  $h(y,t)\in C_j$. If $m_j=1$ we are done
by taking $h_j=c_j$ on $A_j$. If $m_j>1$ we can finish by further partitioning using the finiteness of the residue field.
\end{proof}

\subsection{The $p$-adic proofs for constructible functions}\label{proofconstrp}
For $p$-adic constructible functions (thus without additive character), the proofs reduce to the Presburger cases of Section \ref{sec:qfix} with $q=q_K$, via $p$-adic cell decomposition and the following definitions and results.

\begin{defn}\label{full}
If $f_j\colon X \subset K^{m+1} \to \ZZ$ and the $A_i$ are as in Theorem \ref{cd}, then
call $f_j$ \emph{prepared} on the cells $A_i$.
We call $A_i$ a \emph{full cell} and we call $f_j$ \emph{fully prepared} on the $A_i$
if the base of $A_i$ is itself a cell on which the $h_{ij}(x)$---with notation from Theorem \ref{cd} for $1$-cells and with $h_{ij}=f_j$ in the case of $0$-cells---and the boundaries of $A_i$ are prepared, and so on $m$ times.
It is also clear what we mean by a full cell $A\subset Y \times K^{m+1}$ \emph{over} some definable set $Y$, in analogy to the notion of cells over $Y$ of Definition \ref{def::cell}.
By the centers of a full cell, we mean a tuple of centers, consisting of the center of the cell $A$ over $Y$, the center of the base $A'$ of $A$, the center of the base of $A'$ and so on.
\end{defn}

\begin{defn}\label{skeleton}
Let $A\subset K^{m}$ be a full cell with center $c_{m}$ in the last coordinate, up to center $c_1\in K$ for the first coordinate. The skeleton of $A$ is then the subset $S(A)$ of $(\ZZ\cup\{+\infty\})^{m}$ which is the image of $A$ under the map
$$
x\in A\mapsto (\ord (x_1-c_1 )  ,\ord (x_2-c_2(x_1) )  , \ldots, \ord (x_m-c_m(x_1,\dots,x_{m-1}) ) ,
$$
where we have extended $\ord$ to a map $\ord: K\to \ZZ\cup\{+\infty\}$.
Write $s_A$ for the natural map $A\to S(A)$ which we call the skeleton map.
Likewise, if $A\subset Y\times K^\ell$ is a full cell over $Y$ for some definable set $Y$, it is clear what we mean by the skeleton $S_{/Y}(A)$ over $Y$ and the skeleton map $s_{A/Y}$ of $A$ over $Y$. \end{defn}

\begin{remark}\label{skel}
For $A\subset K^{m}$ a full cell, any fiber of the skeleton map $s_A$ is a Cartesian product of singletons and balls, and hence, the volume of such a fiber $s_A^{-1}(r)$ equals $0$ for each $r$, or, equals $q_K^{\alpha(r)}$ for a definable function $\alpha:A\to \ZZ$. Clearly the set of $r$ such that
$s_A^{-1}(r)$ has volume zero is a definable set.
\end{remark}

Call a definable function $f:X\subset Y\times \ZZ^m\to Y\times \ZZ^\ell$ linear over $Y$ if there is a definable function $a:Y\to \ZZ^\ell$ and an affine map $g:\QQ^m\to\QQ^\ell$ such that $f(y,z) = (y,g(z) + a(y))$ for all $(y,z)\in X$.
\begin{prop}[Parametric Rectilinearization for $\cL_K^3$]\label{paramrectip}
Let $Y$ and $X\subset Y\times \ZZ^m$ be definable sets. Then there exists a finite partition of $X$ into
definable sets such that for each part $A$, there is a set $B\subset Y\times \ZZ^{m}$
and a definable bijection $\rho:A\to B$ which is linear over $Y$
such that, for each $y\in Y$, the set
$B_y$
is
a set of the form
$\Lambda_y\times \NN^\ell$ for a finite subset $\Lambda_y\subset \NN^{m-\ell}$ depending on $y$ and for an integer $\ell\geq 0$ only depending on
$A$.
\end{prop}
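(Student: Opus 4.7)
The plan is to reduce Proposition~\ref{paramrectip} to its purely Presburger analogue, Theorem~\ref{paramrecti}, by factoring every $\cL_K^3$-definable subset of $Y\times\ZZ^m$ through an auxiliary parameter space in the value group sort. First I would show, using quantifier elimination in $\cL_K^3$ (Macintyre in the semi-algebraic case, Denef--van den Dries in the subanalytic case) together with Presburger quantifier elimination in the $\ZZ$-sort, that after a finite $\cL_K^3$-definable partition of $Y$ there exist $\cL_K^3$-definable functions $\alpha_1,\ldots,\alpha_k\colon Y\to\ZZ$ and a Presburger-definable subset $X'\subset\ZZ^k\times\ZZ^m$ such that, writing $\alpha=(\alpha_1,\ldots,\alpha_k)$,
\[
X=\{(y,z)\in Y\times\ZZ^m : (\alpha(y),z)\in X'\}.
\]
The point is that the only $z$-atoms available in the language are Presburger inequalities and congruences between $\ZZ$-linear combinations of the $z_j$ and definable $\ZZ$-valued functions of $y$; residue-field atoms are finite equality conditions that can be absorbed into the partition of $Y$.

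Next I would apply Theorem~\ref{paramrecti} to $X'$ with parameter set $S=\ZZ^k$, obtaining a finite Presburger partition $X'=\bigsqcup_i A'_i$ and for each $A'_i$ a linear Presburger bijection $\rho'_i\colon A'_i\to B'_i$ over $\ZZ^k$ whose fibres $(B'_i)_u$ are of the form $\Lambda_u\times\NN^{\ell_i}$. Pulling back along $\alpha$ yields the desired data: set
\[
A_i := \{(y,z)\in X:(\alpha(y),z)\in A'_i\}, \qquad B_i := \{(y,z)\in Y\times\ZZ^m:(\alpha(y),z)\in B'_i\},
\]
and define $\rho_i(y,z):=(y,\pi(\rho'_i(\alpha(y),z)))$, where $\pi$ denotes the projection onto the $\ZZ^m$-factor. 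Linearity of $\rho'_i$ over $\ZZ^k$ means $\rho'_i(u,z)=(u,g(z)+c(u))$ with $g\colon\QQ^m\to\QQ^m$ affine and independent of $u$, and $c\colon\ZZ^k\to\ZZ^m$ a Presburger function; hence $\rho_i(y,z)=(y,g(z)+c(\alpha(y)))$, which is linear over $Y$ in the prescribed sense, with definable shift $c\circ\alpha\colon Y\to\ZZ^m$. The fibres $(B_i)_y=\Lambda_{\alpha(y)}\times\NN^{\ell_i}$ are exactly of the required form, and $\ell_i$ depends only on $A_i$, matching the uniform behaviour of $\ell$ in Theorem~\ref{paramrecti}.

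\textbf{The main obstacle} lies in the first step: establishing that every $\cL_K^3$-definable subset of $Y\times\ZZ^m$ is, after a $\cL_K^3$-definable partition of $Y$, the pull-back of a uniformly Presburger family under an $\cL_K^3$-definable map $Y\to\ZZ^k$. One has to control mixed atoms such as $\ord(f(y))+\sum_j b_j z_j\equiv 0\bmod n$ by introducing definable $\ZZ$-valued invariants $\alpha_j(y):=\ord(f_j(y))$, together with a partition of $Y$ according to the finitely many possible values of the relevant $\ac_m$-terms, so that in the end the dependence on $z$ is purely Presburger with coefficients that are $\cL_K^3$-definable functions of $y$ alone. Once this reduction is achieved, the pull-back of Theorem~\ref{paramrecti} via $\alpha$ is essentially formal.
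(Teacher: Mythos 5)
Your proposal is correct and follows essentially the same route as the paper's proof: use quantifier elimination in the valued-field variables to factor the family $\{X_y\}_{y\in Y}$ through a definable map from $Y$ into a value-group (and residue-field) parameter space, apply Theorem~\ref{paramrecti} there, and pull the resulting rectilinearization back. The only minor difference is presentational: you absorb the residue-field dependence into a finite partition of $Y$ (valid since $k_K$ is finite), whereas the paper keeps residue-field variables in the auxiliary parameter space $k_K^t\times\ZZ^s$ and invokes orthogonality between the $k_K$-sort and the $\ZZ$-sort to deduce the intermediate case $Y\subset\ZZ^r\times k_K^n$ — these two devices are interchangeable here.
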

\begin{proof}
This follows from Theorem \ref{paramrecti} as follows. If $Y\subset \ZZ^r$ then this is Theorem \ref{paramrecti}. If  $Y\subset \ZZ^r \times k_K^n$, then the result follows from Theorem \ref{paramrecti} and orthogonality between the $k_K$-sort and the $\ZZ$-sort. (This orthogonality is the property that any definable subset of $\ZZ^r \times k_K^n$ equals a finite union of Cartesian products of definable subsets of $\ZZ^r$ and of $k_K^n$; this property follows from quantifier elimination of $K$-variables.)
Now suppose that $Y\subset \ZZ^r\times K^\ell\times k_K^n$.  By quantifier elimination of $K$-variables, there exist $s$ and $t$, a definable function
$$
\nu:Y\to k_K^t \times \ZZ^s,
$$
and a definable set
$$X'\subset  k_K^t\times \ZZ^{s+m},
$$
such that $X_y$ equals $X'_{\nu(y)}$ for every $y\in Y$. Applying the previous case to the set $X'$ yields a partition of $X'$ into parts $A'$ with the desired properties. Let $\kappa$ be the function $$
\kappa:X\to X': (y,w)\mapsto (\nu(y),w).
$$
Now the sets $\kappa^{-1}(A')$ form a partition of $X$ with the desired properties. (As an alternative proof one may adapt the proof of Theorem 3 of \cite{CPres}.)
\end{proof}

We now have the following variants of the Presburger Theorems \ref{p2} and \ref{p2'}.
\begin{cor}\label{p2pZ}
Let $f$ be in $\cC(X\times \ZZ^m)$ for some definable set $X$ and some $m\geq 0$.
Then there exist $h_1,h_2$ and $h_3$ in $\cC(X)$ such that
\begin{equation}\label{p2eqpZ}
\Int(f,X) = Z(h_1), 
\end{equation}
\begin{equation}\label{p2bpZ}
\Bdd(f,X) = Z(h_2), 
\end{equation}
and
\begin{equation}\label{p3pZ}
\Iva(f,X) = Z(h_3), 
\end{equation}
where integrability in (\ref{p2eqpZ}) is with respect to the counting measure on $\ZZ^m$.
\end{cor}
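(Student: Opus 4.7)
The plan is to reduce Corollary \ref{p2pZ} to its Presburger analogue Theorem~\ref{p2} by showing that any $f \in \cC(X \times \ZZ^m)$ factors through a definable map from $X$ to a purely ``residue-field and value-group'' parameter space, which behaves like a Presburger set.

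First I would establish a factorization lemma: for any $f \in \cC(X \times \ZZ^m)$ there exist integers $s, t \geq 0$, a definable function
\[
\nu \colon X \to k_K^t \times \ZZ^s,
\]
and a function $F$ on $(k_K^t \times \ZZ^s) \times \ZZ^m$ which is Presburger constructible in base $q_K$ when restricted to each of the finitely many fibers $\{\kappa\} \times \ZZ^s \times \ZZ^m$ with $\kappa \in k_K^t$, such that
\[
f(x, y) \;=\; F(\nu(x), y) \qquad \text{for all } (x, y) \in X \times \ZZ^m.
\]
By definition, $f$ is an $\AA_{q_K}$-linear combination of products of definable $\ZZ$-valued functions $\alpha_i \colon X \times \ZZ^m \to \ZZ$ and of powers $q_K^{\beta_i}$ with $\beta_i \colon X \times \ZZ^m \to \ZZ$ definable, so it suffices to factor each $\alpha_i, \beta_i$ through such a $\nu$ and then take the joint $\nu$. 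The factorization of a single definable $\ZZ$-valued function follows from quantifier elimination of $K$-variables in $\cL_K^3$, exactly as used in the proof of Proposition~\ref{paramrectip}: the graph of $\alpha_i$, being a definable subset of $X \times \ZZ^m \times \ZZ$, is determined by a definable map $X \to k_K^{t_i} \times \ZZ^{s_i}$.

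Next, I would apply Theorem~\ref{p2} to $F$ with parameter space $S = k_K^t \times \ZZ^s$. Since Theorem~\ref{p2} is stated for Presburger sets, I would partition $S$ into the finitely many fibers indexed by $\kappa \in k_K^t$ and apply Theorem~\ref{p2} on each Presburger slice $\{\kappa\} \times \ZZ^s$, then reassemble the outputs into a single function on $S$ (this is unproblematic because $k_K^t$ is finite, and combining finitely many zero loci is handled by Lemma~\ref{inters}). This produces functions $H_1, H_2, H_3$, Presburger constructible on each slice, whose zero loci in $S$ are $\Int(F, S)$, $\Bdd(F, S)$, and $\Iva(F, S)$ respectively.

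Finally, I would pull back by setting $h_i(x) := H_i(\nu(x))$. Since $\nu$ is definable and the $H_i$ are built from Presburger functions and powers of $q_K$, the compositions $h_i$ lie in $\cC(X)$. Because the function $y \mapsto f(x,y)$ depends on $x$ only through $\nu(x)$, one has $\Int(f, X) = \nu^{-1}(\Int(F, S))$ and similarly for $\Bdd$ and $\Iva$, so $Z(h_i)$ equals the desired locus. The main obstacle is the factorization step; once the orthogonality between the valued-field sort and the $(k_K, \ZZ)$-sorts has been exploited to produce $\nu$, everything reduces cleanly to the Presburger results of Section~\ref{sec:qfix}.
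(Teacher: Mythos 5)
Your proposal is correct, and it follows a genuinely different organization than the paper's. The paper's (one-line) proof says to re-run the entire proof of Theorem~\ref{p2}---that is, the rectilinearization, the reduction to functions of the shape~(\ref{frecti}), and the analysis via Lemmas~\ref{lemqexp0} and~\ref{lemqcon0}---but with the $\cL_K^3$ rectilinearization (Proposition~\ref{paramrectip}) in place of the Presburger one (Theorem~\ref{paramrecti}). You instead extract the factorization $f(x,y)=F(\nu(x),y)$ through a definable map $\nu\colon X\to k_K^t\times\ZZ^s$ \emph{up front} and then apply Theorem~\ref{p2} as a black box on each of the finitely many $\kappa$-slices of $k_K^t\times\ZZ^s$. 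That factorization is exactly the quantifier-elimination/orthogonality step that the paper uses \emph{inside} the proof of Proposition~\ref{paramrectip}, so the underlying ideas coincide; but your arrangement has the advantage of not reopening the proof of Theorem~\ref{p2}. What it buys is conceptual economy: the $p$-adic statement literally reduces to the Presburger one over the image of $\nu$, with the constructibility of $h_i=H_i\circ\nu$ in $\cC(X)$ following because $\nu$ is definable, $k_K^t$ is finite, and characteristic functions of definable sets are in $\cC(X)$. What the paper's phrasing buys is brevity, and it makes the structural parallel with the argument for the Presburger case explicit for a reader who already holds that proof in mind.

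One small point worth making explicit in a write-up: $F$ is only canonically defined over the image of $\nu$; extend by $0$ on the complement so that $\Int(F,S)$, $\Bdd(F,S)$, $\Iva(F,S)$ are defined on all of $S=k_K^t\times\ZZ^s$, after which $\Int(f,X)=\nu^{-1}\bigl(\Int(F,S)\bigr)$ and similarly for the other two loci. Also, when reassembling over $\kappa$, the functions $H_i$ on $S$ you produce are only \emph{slice-wise} Presburger constructible (the slice functions genuinely depend on $\kappa$), so the constructibility of $h_i=H_i\circ\nu$ should be checked directly as you do, via the finite sum $h_i=\sum_\kappa \11_{\nu_1=\kappa}\cdot\bigl(H_{i,\kappa}\circ\nu_2\bigr)$, rather than by invoking constructibility of $H_i$ as an object on $S$ in its own right.
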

\begin{cor}[Interpolation]\label{p2'pZ}
Let $f$ be in $\cC(X\times \ZZ^m)$ for some definable set $X$ and some $m\geq 0$.
Then there exists $g$ in $\cC(X\times \ZZ^m)$ with $\Int(g,X)=X$ and such that
$f(x,y)=g(x,y)$ whenever $x$ lies in $\Int(f,X)$.
\end{cor}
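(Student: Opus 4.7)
The plan is to reduce Corollary~\ref{p2'pZ} to the purely Presburger Theorem~\ref{p2'} by the same two-step mechanism that appears in the proof of Proposition~\ref{paramrectip}: first eliminate the $K$-variables in $X$ using quantifier elimination in $\cL_K^3$, then handle the residue-field coordinates via orthogonality between the $k_K$- and $\ZZ$-sorts.

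Concretely, I would write $f=\sum_i a_i\, q_K^{\beta_i(x,y)}\prod_j \alpha_{ij}(x,y)$ with definable functions $\alpha_{ij},\beta_i\colon X\times\ZZ^m\to\ZZ$ and coefficients $a_i\in\AA_{q_K}$. Applying QE of $K$-variables to $X$ together with the graphs of the $\alpha_{ij}$ and $\beta_i$ yields integers $s,t\geq 0$, a definable surjection $\nu\colon X\to Y$ onto a definable $Y\subset k_K^t\times\ZZ^s$, and definable $\tilde\alpha_{ij},\tilde\beta_i$ on $Y\times\ZZ^m$ with $\alpha_{ij}(x,y)=\tilde\alpha_{ij}(\nu(x),y)$ and $\beta_i(x,y)=\tilde\beta_i(\nu(x),y)$. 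Setting $\tilde f(u,y):=\sum_i a_i\, q_K^{\tilde\beta_i(u,y)}\prod_j \tilde\alpha_{ij}(u,y)$ gives a function in $\cC(Y\times\ZZ^m)$ with $f(x,y)=\tilde f(\nu(x),y)$, and in particular $\Int(f,X)=\nu^{-1}(\Int(\tilde f,Y))$.

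Next I would construct an interpolation $\tilde g$ of $\tilde f$ over $Y$. By orthogonality, $Y$ partitions into finitely many pieces $A_k\times B_k$ with $A_k\subset k_K^t$ finite and $B_k\subset\ZZ^s$ a Presburger set. For each of the finitely many $\bar\xi\in A_k$, the slice $\tilde f(\bar\xi,\cdot,\cdot)$ belongs to $\cP_{q_K}(B_k\times\ZZ^m)$, so Theorem~\ref{p2'} produces an interpolation $g_{\bar\xi}\in\cP_{q_K}(B_k\times\ZZ^m)$ with $\Int(g_{\bar\xi},B_k)=B_k$ and $g_{\bar\xi}=\tilde f(\bar\xi,\cdot,\cdot)$ on $\Int(\tilde f(\bar\xi,\cdot,\cdot),B_k)\times\ZZ^m$. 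Since each $A_k$ is finite, the family $\{g_{\bar\xi}\}$ is assembled into a single $\tilde g\in\cC(Y\times\ZZ^m)$ by multiplying each $g_{\bar\xi}$ with the characteristic function of $\{\bar\xi\}\times B_k$ and summing over $\bar\xi$ and $k$. Finally, $g(x,y):=\tilde g(\nu(x),y)$ is the required interpolation, since pullback along the definable map $\nu$ preserves the class $\cC$ and transports the locus-of-integrability and agreement properties from $Y$ to $X$.

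The main technical point is the QE factorization step producing $\nu$ and the $\tilde\alpha_{ij},\tilde\beta_i$: the $K$-valued coordinates of $x$ must interact with the $\ZZ$-valued functions $\alpha_{ij},\beta_i$ only through angular components and valuations of definable polynomials in those coordinates. This is guaranteed by quantifier elimination in $\cL_K^3$ applied to the relevant graphs, exactly as in the proof of Proposition~\ref{paramrectip}, but it is the one place where anything beyond bookkeeping happens; orthogonality, the fiberwise application of Theorem~\ref{p2'} over a finite set $A_k$, and the final pullback along $\nu$ are all formal.
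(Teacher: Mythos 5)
Your proof is correct, but it takes a genuinely different route from the paper's. The paper disposes of Corollaries~\ref{p2pZ} and~\ref{p2'pZ} in one line: rerun the proof of Theorems~\ref{p2} and~\ref{p2'} in the $\cL_K^3$ setting, using Proposition~\ref{paramrectip} as the rectilinearization tool in place of Theorem~\ref{paramrecti}; the $\cC(Y)$-coefficients then play the role that the $\cP_q(S)$-coefficients played before, and Lemmas~\ref{lemqexp0}, \ref{lemqcon0}, \ref{inters} transfer verbatim. You instead use the QE-plus-orthogonality factorization (the \emph{internal} mechanism of the proof of Proposition~\ref{paramrectip}) to factor the whole problem through a definable $\nu\colon X\to Y\subset k_K^t\times\ZZ^s$, so that Theorem~\ref{p2'} can be applied as a black box, slice-by-slice over the finitely many residue-field parameters $\bar\xi\in A_k\subset k_K^t$, and the slicewise interpolations $g_{\bar\xi}$ then get reassembled by a finite sum of products with characteristic functions. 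The logic is sound: the pullback along $\nu$ is a ring map on $\cC$, $\Int(g,X)=\nu^{-1}(\Int(\tilde g,Y))$, and the agreement on $\Int(f,X)$ transports as you say.

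The tradeoff worth noting: your reassembly step uses the finiteness of $k_K$ in an essential way --- you invoke Theorem~\ref{p2'} separately for each $\bar\xi$, with no uniformity claimed across $\bar\xi$, and then you can afford to glue because $k_K^t$ is finite. That is perfectly fine for the present $p$-adic corollary. But the paper's route, which stays inside the $\cL_K^3$ framework and lets the $k_K$-variables ride along as coefficients of constructible functions rather than enumerating them, is the one that survives passage to the uniform/motivic setting of Section~\ref{s:mot}: Corollary~\ref{p2'motZ} is then proved by the literal same argument with Proposition~\ref{paramrectimot} in place of Proposition~\ref{paramrectip}, with no residue field to enumerate. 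Your argument would need to be reworked there, since one would need the family $(g_{\bar\xi})_{\bar\xi}$ to arise from a single definable (and $K$-uniform) datum. Minor bookkeeping point: you should take a single $\nu$ that simultaneously factorizes all the graphs of $\alpha_{ij}$, $\beta_i$ and the set $X$ --- obtained by taking a common refinement/product of the individual $\nu$'s from QE --- before partitioning $Y$ via orthogonality.
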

\begin{proof}[Proofs of Corollaries \ref{p2pZ} and \ref{p2'pZ}]
The proofs of Theorems \ref{p2} and \ref{p2'} apply also in this setting, where one uses Theorem \ref{paramrectip} instead of Theorem \ref{paramrecti}.
\end{proof}

\begin{proof}[Proof of Theorems \ref{p2p} and \ref{p2'p}]
By an inductive application of the Cell Decomposition Theorem \ref{cd}, partition $X\times K^m$ into finitely many full cells $A_i$ over $X$ such that for each $i$, the restriction $f_{|A_i}$ factors through the skeleton map $s_{A_i/X}$ of $A_i$ over $X$. Let us identify each skeleton with a definable set, for example by replacing $\{+\infty\}$ by a disjoint copy of the singleton $\{0\}$.
Let us write $f_i$ for the map from the skeleton $S_{/X}(A_i)$ of $A_i$ over $X$ to $\AA_{q_K}$ induced by $f_{|A_i}$. Then $f_i$ lies in $\cP_{q_K}(S_{/X}(A_i))$ for each $i$.
The function $M$ sending $z\in S_{/X}(A_i)$ to the volume of the fiber $(s_{A_i/X})^{-1}(z)$, taken inside $K^m$, either is identically zero on $S_{/X}(A_i)$ or is of the form $q_K^{\alpha_i(z)}$ on $S_{/X}(A_i)$ for some definable function $\alpha_i$,
since the fibers of $s_{A_i/X}$ are of a very simple form, see Remark \ref{skel}. Hence, $\tilde f_i$ given by $z\mapsto f_i(z)\cdot M(z)$ lies in $\cC(S_{/X}(A_i))$.
For each $i$, let $X_i$ be the image of $A_i$ under the projection to $X$. Note that $X_i$ also equals the image of $S_{/X}(A_i)$ under the projection to $X$.
By Corollary \ref{p2pZ}, we now obtain for each $i$ that there are $h_{i1}$, $h_{i2}$ and $h_{i3}$ in
$\cC(X_i)$ such that
$$
\Int(\tilde f_i,X_i) = Z(h_{i1}), 
$$
$$
\Bdd(f_i,X_i)= Z(h_{i2}), 
$$
and
$$
\Iva(f_i,X_i) = Z(h_{i3}). 
$$
Extend each of the $h_{ij}$ by zero to a function ${\tilde h}_{ij}$ in $\cC(X)$, thus defined on the whole of $X$. Now for $j=1,2,$ or $3$, let $h_j$ be the function
$$
\sum_{i}{\tilde h}_{ij}^2.
$$
Then the $h_j$ for $j=1,2,3$ are as required.

For the construction of $g$ as desired for Theorem \ref{p2'p} one proceeds as follows.
By Corollary \ref{p2'pZ} one finds for each $i$ a function $g_i$
in $\cC(S_{/X}(A_i))$ with $\Int(g_i,X_i)=X_i$ and such that
$\tilde f_i(x,s)=g_i(x,s)$ whenever $x$ lies in $\Int(\tilde f_i,X_i)$.
Now we define the function $g$ on $X\times K^m$ as follows.
For $(x,y)\in A_i$, let $z=s_{A_i/X}(x,y)$, and let $g(x,y):=f(x,y)$ if $M(z)=0$, and the quotient $g(x,y):=g_i(z)/M(z)$ when $M(z)\not=0$. Note that constructible functions stay constructible when divided by $M$ whenever $M$ is nonzero, by its simple form $q_K^{\alpha_i}$ as described above. The function $g$ is as required.
\end{proof}

\begin{proof}[Proof of Theorem \ref{p1p-new}]
The theorem follows from Theorem \ref{p2'p} and \cite[Theorem 4.2]{Ccell}, in the same way as Theorem \ref{p1q-new} follows from Theorem \ref{p2'} and \cite[Theorem-Definition 4.5.1 ]{CLoes}. Indeed, by
the interpolation result Theorem \ref{p2'p}, there exists $g_0$ in $\cC(X\times K^m)$ with $\Int(g_0,X)=X$ and such that
$f(x,y)=g_0(x,y)$ whenever $x$ lies in $\Int(f,X)$. Now, by \cite[Theorem 4.2]{Ccell} and by Remark \ref{remCcell} for the semi-algebraic case, the function $g$ which sends $x\in X$ to  $\int_{K^m}g_0(x,y)|dy|$ lies in $\cC(X)$. Clearly $g$ is as required.
\end{proof}

In fact, the proof of Theorems \ref{p2p} and \ref{p2'p} yields the following slightly more general variant.
\begin{cor}\label{corHX}
Let $f$ be in $\cH(X)\otimes_{\cC(X)} \cC(X\times K^m)$ for some definable set $X$, some $m\geq 0$, and some inclusion $\cC(X)\subset \cH(X)$ of $\AA_q$-algebras of complex valued functions on $X$.
Then there exist functions $h_1,h_2$, $h_3$ in $\cH(X)$ and $g$ in $\cH(X)\otimes_{\cC(X)} \cC(X\times K^m)$  such that the zero loci of the $h_i$ equal respectively
\begin{equation*}
\Int(f,X),\qquad
\Bdd(f,X),\qquad\text{and}\qquad
\Iva(f,X),
\end{equation*}
and such that $\Int(g,X)=X$ and
$f(x,y)=g(x,y)$ whenever $x$ lies in $\Int(f,X)$. Moreover, any such $g$ can be written as a finite sum of terms of the form
$$
h_i\cdot f_i
$$
with $h_i\in \cH(X)$ and $f_i\in \cC(X\times K^m)$ satisfying $\Int(f_i,X)=X$.
\end{cor}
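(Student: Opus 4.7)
I would prove Corollary~\ref{corHX} by running the proofs of Theorems~\ref{p2p} and~\ref{p2'p} while keeping careful track of the algebra in which the coefficients live throughout. Writing $f=\sum_k h_k f_k$ with $h_k\in\cH(X)$ and $f_k\in\cC(X\times K^m)$, I would first apply the $p$-adic Cell Decomposition Theorem~\ref{cd} to prepare all of the $f_k$ simultaneously, yielding a partition of $X\times K^m$ into full cells $A_i$ over $X$ such that each $f_k|_{A_i}$ factors through a common skeleton map $s_{A_i/X}\colon A_i\to S_{/X}(A_i)\subset X\times\ZZ^t$. After absorbing the fiber-volume factor $q_K^{\alpha_i}\in\cC$ of Remark~\ref{skel}, the integration on $K^m$ reduces to summation on the skeleton, and the induced function lies in $\cH(X_i)\otimes_{\cC(X_i)}\cC(S_{/X}(A_i))$, where $X_i$ denotes the projection of $A_i$ to $X$.

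Next, I would establish the $\cH$-analogue of Corollary~\ref{p2pZ} by running the proofs of Theorems~\ref{p2} and~\ref{p2'} with the Presburger rectilinearization Theorem~\ref{paramrectip}. On each rectilinearized piece the function takes the form
\[
\sum_{i=1}^{r} c_i(x)\,y^{a_i}\,q_K^{b_i\cdot y}
\]
with the $(a_i,b_i)$ pairwise distinct and with $c_i\in\cH(X)$ obtained as an $\cH$-linear combination $c_i=\sum_k h_k c_{k,i}$ of the $\cC$-coefficients $c_{k,i}$ produced by rectilinearizing each $f_k$. Lemma~\ref{lemqcon0} then characterises summability, boundedness, and identical vanishing at a fixed $x$ as the vanishing of $c_i(x)$ for $i$ in explicit ``bad'' index sets $I_1,I_2,I_3$, so each of the three loci equals the intersection $\bigcap_{i\in I_j}Z(c_i)$ on each piece.

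I would then assemble global $h_j\in\cH(X)$ following the proof of Theorem~\ref{p2p}: on each $X_i$ produce $h_{i,j}\in\cH(X_i)$ realising the required zero locus, extend by zero to $\tilde h_{i,j}\in\cH(X)$ using $\chi_{X_i}\in\cC\subset\cH$, and combine via $h_j:=\sum_i \tilde h_{i,j}^2$ in analogy with Lemma~\ref{inters}. For the interpolation statement I would define $g$ piecewise by discarding the bad terms ($i\in I_1$) from the rectilinearized expression of $f$; then $g$ is integrable for every $x\in X$, coincides with $f$ on $\Int(f,X)$ (where all bad $c_i$ vanish), and is manifestly of the form $\sum_i c_i\cdot f_i$ with $c_i\in\cH(X)$ and each $f_i\in\cC(X\times K^m)$ individually integrable, since every surviving factor $y^{a_i}q_K^{b_i\cdot y}$ has $b_{ij}\leq -1$. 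This also settles the ``moreover'' clause.

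The main obstacle I expect is the combining step $h_j=\sum_i \tilde h_{i,j}^2$: the sum-of-squares trick in Lemma~\ref{inters} uses crucially that the functions in $\cC(X)$ are real-valued, whereas elements of $\cH$ are complex and one can have $\sum\xi_i^2=0$ without each $\xi_i$ vanishing. To handle this I would first refine via the definable partition of $X$ given by $Y_J:=(\bigcap_{i\in J}X_i)\setminus(\bigcup_{i\notin J}X_i)$ for $J\subseteq\{1,\dots,N\}$, whose characteristic functions lie in $\cC\subset\cH$, so that on each $Y_J$ only the $\tilde h_{i,j}$ with $i\in J$ are supported. On each $Y_J$ the combining is then forced by the explicit decomposition $c_i=\sum_k h_k c_{k,i}$ with $c_{k,i}\in\cC$ real-valued: intersecting the zero loci of the $c_i$ reduces to a finite conjunction of $\cC$-coefficient conditions on the complex vector $(h_k)_k$, which the sum-of-squares construction in $\cH$ faithfully encodes.
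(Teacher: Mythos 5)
Your overall strategy is the one the paper intends---the paper itself only says ``the proof of Theorems \ref{p2p} and \ref{p2'p} yields'' this corollary---and you correctly re-run that proof while tracking that the rectilinearized coefficients $c_i=\sum_k h_k c_{k,i}$ now live in $\cH(X)$. You also put your finger on the genuine crux: the combining step $h_j=\sum_i \tilde h_{ij}^2$, and already the Presburger-level intersections $Q_A$, $R_A$ and $\bigcap_A(Q_A\cup R_A\cup(S\setminus S_A))$ in the proof of Theorem \ref{p2}, all rest on Lemma \ref{inters}, which uses that the functions are real-valued; for complex-valued elements of $\cH(X)$ one only gets $Z\bigl(\sum_i\tilde h_{ij}^2\bigr)\supset\bigcap_i Z(\tilde h_{ij})$, possibly strictly.

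Your proposed resolution does not close that gap. The refinement by the $Y_J$ only separates supports; on each piece one must still produce a single $\cH(X)$-function whose zero set is a genuine finite intersection $\bigcap_i Z(c_i)$ with $c_i=\sum_k h_k c_{k,i}\in\cH(X)$. The claim that this ``reduces to a finite conjunction of $\cC$-coefficient conditions on $(h_k)_k$, which the sum-of-squares construction in $\cH$ faithfully encodes'' is precisely what needs to be proved, and it is false: $\sum_i c_i^2=\sum_{k,l}h_kh_l\bigl(\sum_i c_{k,i}c_{l,i}\bigr)$ is a quadratic form with real $\cC(X)$-coefficients in the complex numbers $h_k(x)$, and such a form can vanish on nonzero vectors. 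Concretely, with $c_{1,1}=1$, $c_{2,1}=0$, $c_{1,2}=0$, $c_{2,2}=1$ one gets $c_1=h_1$, $c_2=h_2$ and $c_1^2+c_2^2=h_1^2+h_2^2$, which vanishes wherever $h_2=ih_1\neq 0$ even though neither $c_1$ nor $c_2$ does. What is actually needed is the closure property that finite intersections of zero loci of $\cH(X)$-functions are again zero loci in $\cH(X)$. For the one case in which the paper invokes Corollary \ref{corHX}, namely $\cH(X)=\cCexp(X)$, this is exactly Corollary \ref{interspexp}, which rests on the availability of a complex conjugate inside $\cCexp$ (Lemma \ref{conju}), so that $\sum_i h_i\overline{h_i}$ can replace the sum of squares. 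Your proof should invoke this closure property (or Corollary \ref{interspexp} directly) at every place Lemma \ref{inters} is used; with that change the rest of your argument is correct and coincides with the paper's intended one.
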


\subsection{The $p$-adic proofs for constructible exponential functions}\label{proofexpp}

Consider a finite field $\FF_q$ with a nontrivial additive character $\psi$.
The following lemma and its corollary are classical exercises.
\begin{lem}\label{four}
For any function $f:\FF_q\to\CC$ one has
$$
\frac{1}{q} \|\hat f\|_{\sup} \leq \| f\|_{\sup} \leq \|\hat  f\|_{\sup}
$$
where $\|\cdot\|_{\sup} $ is the supremum norm and $\hat f$ the Fourier transform of $f$,
$$
\hat f ( y ) = \sum_{x\in\FF_q} f(x)\psi(-xy).  
$$
\end{lem}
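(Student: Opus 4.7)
The plan is to prove the two inequalities separately; both are routine consequences of the triangle inequality together with Fourier inversion on the additive group $\FF_q$.

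For the right-hand inequality $\|f\|_{\sup}\le\|\hat f\|_{\sup}$, I would invoke the Fourier inversion formula on $\FF_q$, which says
\[
f(x) \;=\; \frac{1}{q}\sum_{y\in\FF_q}\hat f(y)\,\psi(xy)
\]
for every $x\in\FF_q$. This is where the nontriviality of $\psi$ enters: for $\psi$ nontrivial one has the orthogonality relation $\sum_{y}\psi(zy)=q\cdot\mathbf{1}_{z=0}$, which yields inversion directly. Applying the triangle inequality to the right-hand side gives
\[
|f(x)|\;\le\;\frac{1}{q}\sum_{y\in\FF_q}|\hat f(y)|\;\le\;\frac{1}{q}\cdot q\,\|\hat f\|_{\sup}\;=\;\|\hat f\|_{\sup},
\]
and taking the supremum over $x$ finishes this side.

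For the left-hand inequality $\tfrac1q\|\hat f\|_{\sup}\le\|f\|_{\sup}$, I would just apply the triangle inequality directly to the definition of $\hat f$:
\[
|\hat f(y)|\;=\;\Bigl|\sum_{x\in\FF_q}f(x)\,\psi(-xy)\Bigr|\;\le\;\sum_{x\in\FF_q}|f(x)|\;\le\;q\,\|f\|_{\sup},
\]
using $|\psi(-xy)|=1$ and $|\FF_q|=q$, then take the supremum in $y$.

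There is no real obstacle here; the only point worth remarking on is the use of nontriviality of $\psi$, which is needed precisely for the orthogonality relation underlying Fourier inversion. Everything else is the triangle inequality.
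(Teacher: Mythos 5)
Your proof is correct and is the standard argument; the paper itself states the lemma without proof, calling it a ``classical exercise,'' and your argument via Fourier inversion plus the triangle inequality is exactly the one the authors presumably have in mind.
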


\begin{cor}\label{linear}
Consider a function
$$f:\FF_q\to\CC:y\mapsto \sum_{j=1}^s  c_j \psi(b_jy)$$
for some complex numbers $c_j$ and some distinct $b_j\in\FF_q$. Then there exists $y_0\in \FF_q$ with
$$
 \sup_{j=1}^s |c_j|_{\CC} \leq |f(y_0)|_{\CC}.
$$
\end{cor}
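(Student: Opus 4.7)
The plan is to obtain Corollary~\ref{linear} as an immediate consequence of the left-hand inequality in Lemma~\ref{four}, by exhibiting $f$ as the Fourier transform of a suitable auxiliary function.

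Concretely, I would first define an auxiliary function $g\colon\FF_q\to\CC$ by setting $g(-b_j)=c_j$ for $j=1,\dots,s$ and $g(x)=0$ otherwise. This is well defined since the $b_j$ (hence the $-b_j$) are distinct. A direct computation then shows
\[
\hat g(y)=\sum_{x\in\FF_q}g(x)\psi(-xy)=\sum_{j=1}^s c_j\,\psi(b_j y)=f(y),
\]
so $f=\hat g$.

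Next I would apply Lemma~\ref{four} to $g$, which gives $\|g\|_{\sup}\leq\|\hat g\|_{\sup}=\|f\|_{\sup}$. Since the $-b_j$ are distinct, the supremum of $|g|$ is exactly $\sup_{j=1,\dots,s}|c_j|_\CC$. Finally, because $\FF_q$ is finite, the supremum $\|f\|_{\sup}$ is actually attained at some point $y_0\in\FF_q$, yielding
\[
\sup_{j=1}^s |c_j|_\CC \;=\; \|g\|_{\sup}\;\leq\;\|f\|_{\sup}\;=\;|f(y_0)|_\CC,
\]
as required.

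There is no real obstacle here: the statement is essentially a reformulation of one direction of Lemma~\ref{four}, the only minor point being the sign convention in the Fourier transform, which is handled by placing the mass of $g$ at $-b_j$ rather than at $b_j$.
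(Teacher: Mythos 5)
Your proof is correct and is the intended argument: the paper explicitly presents Lemma~\ref{four} and Corollary~\ref{linear} as "classical exercises," and deriving the corollary by realizing $f$ as the Fourier transform of the auxiliary function $g$ supported on $\{-b_j\}$ and then invoking the left-hand inequality of Lemma~\ref{four} is precisely what is expected. You also correctly handle the sign convention in the paper's definition of $\hat{\,\cdot\,}$ and observe that the supremum is attained because $\FF_q$ is finite.
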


\begin{lem}\label{conju}
Let $X$ be a definable set and
let $f$ be in $\cCexp(X)$. Then there exists a function $g$ in $\cCexp(X)$ such that for each $x\in X$,
$f (x)$ and $g (x)$ are conjugate complex numbers. In particular, $f (x)g (x)$ equals the square of the complex norm of $f (x)$ for each $x\in X $, and thus in particular $Z(f) = Z(fg)$.
\end{lem}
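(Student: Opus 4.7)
The plan is essentially to write the complex conjugate of $f$ directly, using the fact that the generators of $\cCexp(X)$ are either real-valued or take values in the unit circle. More precisely, I would first observe that $\cCexp(X)$ is an $\AA_{q_K}$-algebra with $\AA_{q_K}\subset\RR$, and its generators are of two kinds: real-valued constructible functions $c\in\cC(X)$, and unit-circle-valued functions $\psi_K(h)$ for definable $h\colon X\to K$. Using the character identity $\psi_K(a)\psi_K(b)=\psi_K(a+b)$, any product of generators $\prod_j\psi_K(h_j)$ can be rewritten as $\psi_K(\sum_j h_j)$, so after distributing, every element $f\in\cCexp(X)$ admits a presentation
\[
f(x)=\sum_{i=1}^N c_i(x)\,\psi_K\bigl(H_i(x)\bigr)
\]
with $c_i\in\cC(X)$ real-valued and $H_i\colon X\to K$ definable.

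Given such a presentation I would define
\[
g(x):=\sum_{i=1}^N c_i(x)\,\psi_K\bigl(-H_i(x)\bigr).
\]
Since $-H_i$ is again a definable $K$-valued function, $g$ belongs to $\cCexp(X)$ by construction. The key pointwise identity is then immediate: because $\psi_K$ takes values in $S^1\subset\CC^\times$ we have $\overline{\psi_K(a)}=\psi_K(-a)$ for every $a\in K$, and because each $c_i(x)$ is real we get
\[
g(x)=\sum_{i=1}^N \overline{c_i(x)\,\psi_K(H_i(x))}=\overline{f(x)}.
\]

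Finally, $f(x)g(x)=f(x)\overline{f(x)}=|f(x)|_\CC^2$, from which $Z(f)=Z(fg)$ follows trivially. The only step that requires a word of justification is the reduction to the sum-of-products form $\sum c_i\psi_K(H_i)$; this is a direct consequence of the definition of $\cCexp(X)$ as the algebra generated by constructible functions and characters, combined with the multiplicativity of $\psi_K$. I do not foresee any genuine obstacle here — the lemma is essentially formal, and its role in the paper is to convert zero loci of $\cCexp$-functions into zero loci of nonnegative $\cCexp$-functions so that the correspondence-of-loci results for $\Iva$ can be fed into the proof of Theorem~\ref{p2pexp}.
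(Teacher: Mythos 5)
Your proof is correct and takes essentially the same approach as the paper: write $f=\sum_i c_i\psi_K(H_i)$ with $c_i\in\cC(X)$ real-valued, then define $g$ by flipping the signs of the $H_i$. The paper's proof is identical, merely stating the sum-of-products normal form without spelling out the multiplicativity argument as you do.
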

\begin{proof}
The function $g$ can be obtained from $f$ by putting a minus sign in each of the arguments of the additive characters which occur in $f$. More precisely, write $f$ as a finite sum of terms of the form $g_i\psi_K(h_i)$ for $g_i$ in $\cC(X)$ and definable functions $h_i:X\to K$, and define $g$ as the corresponding sum with terms $g_i\psi_K(-h_i)$. Since the $g_i$ take real values, $g$ is as desired.
\end{proof}
This Lemma yields that any finite intersection or finite union of zero loci of functions in $\cCexp(X)$ is again a zero locus of a function in $\cCexp(X)$, analogous to Lemma \ref{inters}.
\begin{cor}\label{interspexp}
Let $X$ be a definable set and let $h_i$ be in $\cCexp(X)$ for $i=1,\ldots,N$.
Then there exist $f$ and $g$ in $\cCexp(X)$ such that
$$
Z(f) = \bigcap_{i=1}^{N} Z(h_i)\ \mbox{ and }\ Z(g) = \bigcup_{i=1}^{N} Z(h_i).
$$
The corresponding statement for $h_i$ in $\cC(X)$ also holds, yielding $f,g\in\cC(X)$.
\end{cor}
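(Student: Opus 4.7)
The plan is to mimic the proof of Lemma \ref{inters}, with one small adjustment to handle the fact that functions in $\cCexp(X)$ take complex values rather than real ones. The union case is immediate: taking $g := \prod_{i=1}^N h_i$, which lies in $\cCexp(X)$ because $\cCexp(X)$ is an algebra, one has $Z(g) = \bigcup_i Z(h_i)$, exactly as in Lemma \ref{inters}. The same $g$ works for the $\cC(X)$-version.

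For the intersection, the naive idea $\sum_i h_i^2$ fails because a sum of squares of complex numbers can vanish even when none of the terms vanishes. The fix is Lemma \ref{conju}: for each $i$, choose $\bar h_i \in \cCexp(X)$ such that $h_i(x)\bar h_i(x) = |h_i(x)|_\CC^2$ for every $x \in X$. Then each $h_i \bar h_i$ is a non-negative real-valued function in $\cCexp(X)$ with zero locus exactly $Z(h_i)$. Consequently, setting
\[
f := \sum_{i=1}^N h_i \bar h_i \;=\; \sum_{i=1}^N |h_i|_\CC^2,
\]
one obtains a non-negative real-valued element of $\cCexp(X)$, and $f(x) = 0$ holds if and only if each $|h_i(x)|_\CC^2$ vanishes, i.e.\ if and only if $x \in \bigcap_i Z(h_i)$. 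Hence $Z(f) = \bigcap_i Z(h_i)$ as desired.

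For the $\cC(X)$-version of the intersection, functions in $\cC(X)$ are already real-valued by construction, so one may simply take $f := \sum_i h_i^2 \in \cC(X)$ and $g := \prod_i h_i \in \cC(X)$, exactly as in the proof of Lemma \ref{inters}. There is no real obstacle here; the only point requiring care is the appeal to Lemma \ref{conju} to produce the complex conjugate inside $\cCexp(X)$, which is precisely what makes the sum-of-squares trick transfer to the exponential setting.
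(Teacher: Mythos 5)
Your proposal is correct and matches the paper's own proof: for the intersection you use Lemma \ref{conju} to produce $\bar h_i$ and set $f = \sum_i h_i\bar h_i$, for the union you take $g = \prod_i h_i$, and for the $\cC(X)$ case you fall back to $\sum_i h_i^2$, exactly as the paper does.
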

\begin{proof}
For $f$ one can take $\sum_{i=1}^N h_i \overline{h_i}$, where $\overline{h_i}$ is the complex conjugate of $h_i$ as given by Lemma \ref{conju}. For $g$ one simply takes the product of the $h_i$. For $h_i$ in $\cC(X)$ one takes the sum of the squares of the $h_i$ for $f$ and the product for $g$.
\end{proof}
As for Presburger constructible functions, the complement of $Z(h)$
in $X$ for $h\in\cC(X)$ is not always equal to the zero locus of some function in $\cC(X)$, and similarly for $h$ in $\cCexp(X)$.

\begin{proof}[Proof of Proposition~\ref{intCLpexp} for $m=1$]
The statement that we have to prove clearly allows us to work
piecewise; if we have a finite partition of $U$ into definable parts $A$, then it suffices to prove the proposition for $f_\ell$ restricted to each part $A$. We actually prove something slightly stronger than
Proposition~\ref{intCLpexp} for the case $m=1$. That is, for a given definable function $\varphi_0:U\to X\times \ZZ^{t_0}$ over $X$, we prove that in addition to the conclusions 1) and 2) of the
proposition, we can require that also the following conditions 3) and 4) hold for each $x\in X$.
\begin{itemize}
\item[3)]
Each of the sets $U_{x,r}$ is either a singleton, or, equal to a maximal ball contained in $U_x$.
\item[4)]\label{cond4} The function $\varphi_0$ factors through $\varphi$, that is,  $\varphi_0= \theta\circ \varphi$ for some definable function $\theta$. 
\end{itemize}

So, let a definable function $\varphi_0:U\to X\times \ZZ^{t_0}$ over $X$ be given. By definition of $\cCexp$, the $f_\ell$ are finite sums of terms of the form $g\psi_K(h)$ for some $g\in\cC(U)$ and some definable $h:U\to K$.
Apply Theorem \ref{cd} to these functions $g\in\cC(U)$ and to the last $t_0$ component functions of $\varphi_0$. By working piecewise, we may suppose that $U$ is one of the so-obtained cells. If $U$ is a $0$-cell over $X$, there is nothing to prove. If $U$ is a $1$-cell over $X$, say, with center $c$, then let $\varphi_1:U\to V_1\subset X\times \ZZ$ be the definable surjective function sending $(x,y)$ in $U$ to $(x,\ord(y-c(x))$. It follows that there are definable functions $h_{\ell i}:U\to K$ and functions $G_{\ell i}$ in $\cCexp(V_1)$ such that for each $\ell$ one has
\begin{equation}\label{f-as-sum}
f_\ell(x,y) = \sum_{i=1}^{N_\ell}G_{\ell i}(\varphi_1(x,y))\psi_K(h_{\ell i}(x,y)),
\end{equation}
and that, for each $x$, the collection of the sets $U^{1}_{x,r} := \{y\in U_x \mid \varphi_1(x,y)=(x,r)\}$
equals the collection of maximal balls contained in $U_x$. Thus, the conditions 1), 3) and 4) already hold for $\varphi_1$.
We now construct $\varphi$ (and modify $G_{\ell i}$ and $h_{\ell i}$ accordingly) such that moreover 2) holds.  

We will proceed by induction on $N := \sum_{\ell=1}^s (N_\ell - 1)$.
Namely, assume that for any finite family of functions $\{f_\ell\}$ on a definable set $U$ (not necessarily the same family and the same set as the given one), such that
the functions $f_\ell$ have a presentation of the form (\ref{f-as-sum}) and satisfying the properties 1), 3), and 4), and with $\sum (N_{\ell}-1)<N$, there exists a function $\varphi$ such that the property 2) holds as well. Then we want to prove the same for any such family and presentation with $\sum(N_\ell-1)=N$. The idea of the proof of the induction step is to increase the number of functions in the family without increasing the total number of terms in their presentations (\ref{f-as-sum}), and thus decrease
$\sum(N_\ell -1)$.  To achieve this, one is still allowed to work piecewise, i.e., to replace $U$ with a subset that comes from using cell decomposition as above. Note that the constant $d$ appearing in 2) will increase by
at most $1$ in each induction step, so that we actually obtain $d \le N$.

If $N = 0$, then all $N_\ell = 1$, and one is done, taking $\varphi=\varphi_1$ and $d = 0$. Indeed, if $N_\ell = 1$, then $|G_{\ell 1}(x,r)|_{\CC}$ equals $|f_\ell(x,y)|_{\CC}$, and thus, if $N=0$, then  $U_{x,r}=W_{x,r}$.

For general $N>0$ we start by pulling out the factor $\psi_K(h_{\ell 1}(x,y))$ out of (\ref{f-as-sum}), i.e., we may assume that $h_{\ell 1}(x,y)=0$ for all $\ell$ and all $(x,y) \in U$.
By Proposition \ref{jacprop} we may moreover suppose that
for each $(x,r)\in V_1$, each $\ell$,  and each $i$ either $h_{\ell i}(x,\cdot)$ is constant on $U^1_{x,r}$, or $h_{\ell i}(x,\cdot)$ restricted to  $U^1_{x,r}$ has the $1$-Jacobian property.
Hence, for each $(x,r)\in V_1$ there exist constants $b_{x,r,\ell,i}\in  K$ such that, for all $y_1,y_2\in U^1_{x,r}$ and all $\ell,i$,
\begin{eqnarray}
  \ord(h_{\ell i}(x,y_1) - h_{\ell i}(x,y_2))
  & = &
  \ord(b_{x,r,\ell,i}\cdot (y_1-y_2)), \label{jac1} \\
  \ac (h_{\ell i}(x,y_1) - h_{\ell i}(x,y_2))
  &=&
  \ac(b_{x,r,\ell,i}\cdot (y_1-y_2)) \label{jac2},
\end{eqnarray}
where $b_{x,r,\ell,1} = 0$ by a previous assumption, and where we write $\ord: K\to \ZZ\cup\{+\infty\}$.
 If for all $\ell, i, x,r$, the function $h_{\ell i}(x,\cdot)$ is constant modulo $(\varpi_K)$ on $U^1_{x,r}$, then, up to a further finite partition of $U$, Lemma \ref{avoidSkol} applied to each of the $h_{\ell i}$ brings us back to the case  $N=0$.
We may thus in particular assume that for each $(x,r)$ in $V_1$,  there exist $\ell, i$ with $b_{x,r,\ell,i} \ne 0$. Choose $\gamma_{x,r} \in K$ with
\[
|\gamma_{x,r}|\cdot \max_{\ell,i} |b_{x,r,\ell,i}| = 1.
\]
For each $x$, $r$ and $\ell$, partition $\{1, \dots, N_\ell\}$ into non-empty subsets $S_{\ell j}(x,r)$, $j \ge 1$, with the property that $i_1,i_2$ lie in the same part $S_{\ell j}(x,r)$ for some $j$ if and only if
\begin{equation}\label{res-equal}
\res (\gamma_{x,r}b_{x,r,\ell,i_1})  = \res (\gamma_{x,r}b_{x,r,\ell,i_2}),
\end{equation}
where $\res:\cO_K\to k_K$ is the natural projection.
By cutting $U$ into finitely many pieces again,
we may assume
that the sets $S_{\ell j}:=S_{\ell j}(x,r)$ do not depend on $(x,r)$.
Since $b_{x,r,\ell,1} = 0$, at least for one $\ell$ there are at least two different sets
$S_{\ell,j}, S_{\ell,j'}$.
Define for each $\ell, j$ and for $(x,y)\in U$
$$
f_{\ell j}(x,y) := \sum_{i\in S_{\ell j}} G_{\ell i}(\varphi(x,y)) \psi_K(h_{\ell i}(x,y))
$$
and consider these functions $(f_{\ell j})_{\ell,j}$ as a single family.
The total number of summands of the family $(f_{\ell j})_{\ell,j}$ is the same
as for the functions $f_\ell$, but there are more functions $f_{\ell j}$ than $f_j$, so
we can apply induction on $N$ to this family $(f_{\ell j})_{\ell,j}$, with the extra condition 3) and 4) for $\varphi_1$ as part of the desired properties.
Thus we find an integer $d\geq 0$, a definable surjection $\varphi:U\to V$ over $X$, definable functions $h_{\ell ji}:U\to K$, and functions $G_{\ell ji}$ with properties 1), 2), 3) and 4) for $\varphi_1$ and for this family.

Let us write $U_{x,r}$ for the sets defined by $\varphi$ as in condition 2). Since
$\varphi_1=\theta\circ \varphi$ for some definable $\theta$, one has $U_{x,r}\subset U^1_{x,r'}$ for each $(x,r)$ and $(x,r')= \theta(x,r)$. 
By cutting $U$ into pieces as before, we may assume that, for each
$x$ and $r$,
not all $h_{\ell i}(x,\cdot)$ are constant modulo $(\varpi_K)$ on $U_{x,r}$, since, as before, this would bring us back to the case $N=0$ for our original family $(f_\ell)_\ell$ via Lemma \ref{avoidSkol}.

We will now show that the subset $M_{x,r}$ of $U_{x,r}$ consisting of those $y$ satisfying both inequalities
\begin{equation}\label{goal-iml}
\sup_{\ell,j,i} |G_{\ell ji}(x,r)|_{\CC}\leq \sup_{\ell,j} |f_{\ell j}(x,y)|_{\CC} \leq \sup_\ell  |f_\ell(x,y)|_{\CC}
\end{equation}
has big volume in the sense that
\begin{equation}\label{vol-goal}
\Vol( U_{x,r}) \le q_K^{d+1}  \Vol(M_{x,r}).
\end{equation}

Once this is proved, we are done for our original family $(f_\ell)_\ell$ by replacing $d$ with $d+1$ while keeping the data of the $\varphi$, $G_{\ell ji}$, and $h_{\ell ji}$.

Thus, to finish the proof, we fix $x$ and $r$ and it remains to
show that $M_{x,r} $ as given by (\ref{goal-iml}) has the property (\ref{vol-goal}).
Consider the partition of the ball $U_{x,r}$ into the balls $B_\xi$ of the form $\xi+\gamma_{x,r}\cO_K$.
(The ball $U_{x,r}$ is indeed a union of such balls $B_\xi$ by our choice of $\gamma_{x,r}$ since there exists a $h_{\ell i}(x,\cdot)$ that is non-constant modulo $(\varpi_K)$
on $U_{x,r}$.) Firstly we will show that $|f_{\ell j}(x,\cdot)|_{\CC}$ is constant on each such
$B_\xi$. Secondly we will show that for each such $B_\xi$ there is a sub-ball $B'_\xi \subset B_\xi$ with $\Vol(B_\xi) = q_K\cdot \Vol(B'_\xi)$ and such that
the second inequality of (\ref{goal-iml}) holds for all $y\in B'_\xi$. These two facts together with the previous application of the induction hypothesis imply (\ref{vol-goal}) and thus finish the proof for $m=1$. Fix $B_\xi\subset U_{x,r}$
and write
$y = \xi + \gamma_{x,r}y' \in B_\xi$ for $y' \in \cO_K$.
 By (\ref{jac1}), (\ref{jac2}), and (\ref{res-equal}), for each $\ell$ and $j$ there is a constant
$c_{\ell j} \in \CC$ such that
\[
f_{\ell j}(x,y) = c_{\ell j} \psi_K(b'_{\ell j}y'),
\]
where we can take $b'_{\ell j} = \gamma_{x,r}b_{x,r',\ell,i}$ for any $i \in S_{\ell j}$ where $r'$ is such that $U_{x,r}\subset U^1_{x,r'}$.
This shows that $|f_{\ell j}(x,\cdot)|_{\CC}$ is constant on
$B_\xi$. We now only have to construct $B'_\xi$.
By renumbering, we can suppose that on $B_\xi$,
$|f_{1,1}|_{\CC}$ is maximal among the $|f_{\ell j}|_{\CC}$,
so that the middle expression of (\ref{goal-iml}) is equal to $|f_{1,1}|_{\CC}$.
In particular, it suffices to choose $B'_\xi$ such that $|f_{1,1}|_{\CC} \le |f_1(x,y)|_{\CC}$ for all $y \in B'_\xi$.
 Now let $\psi$ be the additive character of $\FF_{q_K}$ satisfying
$\psi_K(y') = \psi(\res(y'))$ for $y' \in \cO_K$.
By (\ref{res-equal}), we have $\res(b'_{1j}) \ne \res(b'_{1j'})$ for each $j \ne j'$,
so we can apply Corollary~\ref{linear} to
\[
\tilde{f}:\FF_{q_K} \to \CC : \tilde{y} \mapsto \sum_{j} c_{1j} \psi(\res(b'_{1j})\cdot \tilde{y})
\]
and get an $\tilde{y}_0 \in \FF_{q_K}$ with
$|c_{1,1}|_{\CC} \le |\tilde{f}(\tilde{y}_0)|_{\CC}$.
Set $B'_{\xi} := \{\xi + \gamma_{x,r} y' \mid y' \in\res^{-1}(\tilde{y}_0)\}$.
Since
$f_1(x,y) = \tilde{f}(\res(y'))$ and
$|f_{1,1}|_{\CC} = |c_{1,1}|_{\CC}$, we are done.
\end{proof}

\begin{proof}[Proof of Proposition \ref{intCLpexp} for $m>1$]
We proceed by induction on $m$.
Denote $(y_1,\ldots,y_{m-1})$ by $\hat y$.
Apply the $m=1$ case using $(x,\hat y)$ as parameters and
$y_m$ as the only $y$-variable.
This yields in particular an integer $d_1>0$, a surjection $\varphi_1:U
\to V_1 \subset X \times K^{m-1} \times \ZZ^{t_1}$,
and an expression of each $f_\ell$ as a sum of terms of the form
$G_{1}(\varphi_1(x,y))\psi_K(h_1(x,y))$, where we omit the indices
$\ell, i$ to
simplify notation.

Now apply the induction hypothesis to the collection of functions
$G_{1}$,
this time using $\hat y$ as the $y$-variables, and the variables $(x,
r_1)$ as parameters, where $r_1$ is the variable running over
$\ZZ^{t_1}$.
This yields an integer $d_2$, a surjection $\varphi_2:V_1\to V_2
 \subset X \times \ZZ^{t_1}\times \ZZ^{t_2}$, and an
expression of each $G_1$ as a sum of terms of the form
$G_{2}(\varphi_2(x,\hat y, r_1))\psi_K(h_2(x,\hat y, r_1))$.

Now define $\varphi$ as $\varphi_2\circ \varphi_1$ and $d=d_1 + d_2$.
Then each $f_\ell$ is a sum of terms of the form
$G_{2}(\varphi(x, y))\psi_K(h_1(x,y) + h_2(\phi_1(x, y)))$, so
1) is satisfied and 2) also follows easily.
\end{proof}

\begin{proof}[Proof of Theorems \ref{p2pexp} and \ref{p2'pexp}]
Let $f$ be in $\cCexp(X\times K^m)$ for some definable set $X$ and write $f$ as
$\sum_{i=1}^N G_i(\varphi(x,y)) \psi_K(h_i(x,y))$ as in Proposition \ref{intCLpexp} with $s=1$, $f_1=f$, and $U=X\times K^m$,
so that in particular
the $h_i:X\times K^m\to K$ and $\varphi:U\to V$ are definable functions, and the  $G_i(x,y)$ lie in $\cCexp(V)$. For each $i$ let $H_i$ be the function $G_i\circ\varphi$. Since $K$-valued functions can only depend piecewise trivially on $\ZZ$-variables, one has a natural isomorphism
$$
\cCexp(V) \cong \cCexp(X)\otimes_{\cC(X)} \cC(V)
$$
of $\cC(X)$-algebras,
and thus, the $H_i$ may be assumed to lie in $\cCexp(X)\otimes_{\cC(X)} \cC(X\times K^m)$. To define these tensor products of $\AA_q$-algebras we use the natural inclusions $\cC(X)\subset \cC(V)$, $\cC(X)\subset \cCexp(X)$, and $\cC(X)\subset \cC(X\times K^m)$, which are inclusions of algebras of $\CC$-valued functions.

It is clear that for any $x \in X$, if $x \in \Iva(H_i, X)$ for all $i$, then $x \in  \Iva(f, X)$.
Vice versa, if $f(x, \cdot)$ is identically zero, then $H_i(x,\cdot)$ is zero on each set $W_{x,r}$, and since it is constant on each set $U_{x,r}$, $H_i(x,\cdot)$ is identically zero. Thus we just showed:
\begin{equation}\label{i1}
\Iva(f,X) = \bigcap_i \Iva(H_i,X).
\end{equation}
A similar argument shows
\begin{equation}\label{i2}
\Bdd(f,X) = \bigcap_i \Bdd(H_i,X)\ \mbox { and }\
\Int(f,X) = \bigcap_i \Int(H_i,X),
\end{equation}
where in the case of $\Int(f,X)$, we use the inequality between the volumes of $W_{x,r}$ and $U_{x,r}$
given by Proposition \ref{intCLpexp}.
By Corollary \ref{corHX} with $\cH(X)=\cCexp(X)$, applied to each of the functions $H_i$, we find that each of the sets $\Int(H_i,X)$, $\Bdd(H_i,X)$,
$\Iva(H_i,X)$ is equal to a zero locus of a function in $\cCexp(X)$. By Corollary \ref{interspexp} applied to the intersections from (\ref{i1}) and (\ref{i2}), the proof of Theorem \ref{p2pexp} is finished.

Finally apply Corollary \ref{corHX} with $\cH(X)=\cCexp(X)$ to each of the functions $H_i$ to obtain $g_i$ in $\cCexp(X\times K^m)$ such that $\Int(g_i,X)=X$ and
$H_i(x,y)=g_i(x,y)$ whenever $x$ lies in $\Int(H_i,X)$.
Now take $g=\sum_i g_i\psi_K (h_i)$ as required for Theorem \ref{p2'pexp}.
\end{proof}

For the proof of Theorem \ref{p1pexp} we will apply Theorem 8.6.1 (1) of \cite{CLexp}, which has stringent integrability conditions. These conditions can be satisfied by the last part of the statement of Theorem \ref{p2'pexp}.
\begin{proof}[Proof of Theorem \ref{p1pexp}]
Let  $g$ be given by Theorem \ref{p2'pexp}. Since, by the same theorem, $g$ is a finite sum of terms of the form
$$
f_0\psi_K(f_1)
$$
with $f_1:X\times K^m\to K$ definable and $f_0\in \cC(X\times K^m)$ satisfying $\Int(f_0,X)=X$, the function $g$ falls under the scope of Theorem 8.6.1 (1) of \cite{CLexp}, which yields the desired conclusion.
\end{proof}

\section{Transfer principles for integrability and boundedness}\label{s:mot}

In this section we use motivic functions from \cite{CLoes} and \cite{CLexp} to study loci of integrability and other loci, uniformly in all local non-archimedean fields whose residue fields have large characteristic (including $\FF_q\llp t\rrp$). We give uniform analogues of the results of Section \ref{sec:qfixp}. This finally leads to the main results of the paper: the transfer principles for integrability and boundedness.
Again, we first prove the results without oscillation using the constructible motivic functions from \cite{CLoes}, and subsequently in a setting with oscillation using the motivic exponential functions from \cite{CLexp}. We start by recalling the necessary definitions.

\subsection{Notation}\label{sub:notation}

Let  $\cO$ be a ring of integers of a number field. We will use the first order language of Denef-Pas with
coefficients in $\cO[[t]]$, and denote it by $\LPas$. By definable we will from now on mean $\LPas$-definable, without using other coefficients than those from $\cO[[t]]$. Recall that  $\LPas$ has three sorts: the valued field, the residue field, and the value group. The language $\LPas$ has as symbols the usual logical symbols, the language of rings $(+,-,\cdot,0,1)$ with coefficients from $\cO[[t]]$ for the valued field, another copy of the language of rings for the residue field, the Presburger language $(+,-,\leq,\{\cdot\equiv\cdot\bmod n\}_{n>1},0,1)$ for the value group, the symbol $\ord$ for the valuation map on the nonzero elements of the valued field, and the symbol $\ac$ for an angular component map.
All structures for $\LPas$ that we will consider are triples $(L,k_L,\ZZ)$ with $L$ a complete discretely valued field, $\cO_L$ its valuation ring with residue field $k_L$, and value group identified with $\ZZ$, together with the information of how the symbols of $\LPas$ are interpreted in this triple. To fix the meaning of the symbols of $\LPas$ one fixes a ring homomorphism $\lambda_{\cO,L}:\cO[[t]]\to \cO_L$ respecting $1$ and sending $t$ to a uniformizer $\varpi$ of $\cO_L$. If one fixes such $\lambda_{\cO,L}$ then all the symbols of $\LPas$ have a unique meaning where we require that $\ac:
L \to k_L$ is the unique multiplicative map which extends the projection $\cO_L^\times\to
k_L^\times$ and sends $\varpi$ to $1$; it is given by
$$
\ac:L \to k_L:\begin{cases} x\varpi^{-\ord x}\bmod (\varpi) &\quad \mbox{ if $x\not=0$,}\\
0 &\quad \mbox{ if $x=0$},
  \end{cases}
$$
(the other symbols have their natural meaning).
Note that, by the completeness of $L$, a ring homomorphism $\cO\to\cO_L$
and the choice of the uniformizer $\varpi$ of $\cO_L$ determine a ring homomorphism $\cO[[t]]\to \cO_L$ sending $t$ to $\varpi$.

Let $\AO$ be the collection of non-Archimedean local fields $K$ of characteristic zero with a ring homomorphism $\cO\to K$ and a uniformizer $\varpi_K$ of $\cO_K$.
Let $\BO$ be the collection of all local fields $K$ of positive characteristic with a ring homomorphism $\cO\to K$  and a uniformizer $\varpi_K$ of $\cO_K$.
Let $\CO$ be the union of $\AO$ and $\BO$.
 For an integer $M>0$, denote by $\AO[,M]$, $\BO[,M]$, resp.~$\CO[,M]$ those fields in $\AO$, $\BO$, resp.~$\CO$ that have residue characteristic larger than $M$.
For $K$ in $\CO$, write $\cM_K$ for the maximal ideal of $\cO_K$, $k_K$ for the residue field and $q_K$ for the number of elements of $k_K$. For $x\in \cO_K$, denote by $\overline{x} \in k_K$ the reduction of $x$ modulo $(\varpi_K)$.

For $K\in\CO$, write $\cD_K$ for the collection of additive characters $\psi:K\to\CC^\times$
which are trivial on the maximal ideal
$\cM_K$ and which coincide on $\cO_K$ with the character sending $x\in \cO_K$ to
$$
\exp(\frac{2\pi i}{p} {\rm Tr}_{k_K}(\bar x)),
$$
where ${\rm Tr}_{k_K}$
is the trace of $k_K$ over its prime subfield and $p$ is the characteristic of $k_K$.
Note that there is no restriction in only considering additive characters lying in $\cD_K$, since, in our set-up, all other additive characters on $K$ can appear naturally by using a parameter over the valued field.

For any $K\in\CO$, the measure we put on $K^n\times k_K^m\times \ZZ^r$ is the product measure of the Haar measure on $K^n$ normalized so that $\cO_K^n$ has measure $1$ with the discrete measure (the counting measure) on $k_K^m\times \ZZ^r$. Likewise, we endow $K^n\times k_K^m\times \ZZ^r$ with the product topology of the valuation topology on $K^n$ with the discrete topology on $k_K^m\times \ZZ^r$.

\subsection{The motivic setting}\label{mot:set}

We recall the terminology and notation from \cite{CLoes} and \cite{CLexp}.
\subsubsection{Definable subassignments}\label{def}
For any field $k$ of characteristic zero,
we consider the Laurent series field $k\llp t \rrp $ over $k$ with the uniformizer $t$ and the corresponding angular component map and discrete valuation.

Any $\LPas$-formula $\varphi$ in $m$ free valued field variables, $n$ free residue field variables, and $r$ free value group variables, and any field $k$ of characteristic zero which contains our fixed ring of integers $\cO$ as a subring gives rise to a subset of
$$
k\llp t \rrp ^m\times k^n\times \ZZ^r
$$
consisting of the points satisfying $\varphi$, which can be written
symbolically
as follows:
$$
\{(x,y,z)\in k\llp t \rrp ^m\times k^n\times \ZZ^r
\mid \varphi(x,y,z) \};
$$
this subset is denoted by $\varphi_{k\llp t\rrp}$.

By a definable subassignment we mean the map $X$ which sends $k$ to $X(k) := \varphi_{k\llp t\rrp}$ for some $\LPas$-formula $\varphi$, where $k$ runs over characteristic zero fields which contain $\cO$ as a subring. Denote by $h$ the definable subassignment which sends $k$ to the singleton $\{0\}$, also written as $k\llp t \rrp ^0\times k^0\times \ZZ^0$.
(For readers familiar with the language of model theory, note that two formulas $\varphi$ and $\varphi'$ yield the same definable subassignment iff they are equivalent modulo the theory of Henselian valued fields of characteristic $(0, 0)$ with value group elementary equivalent to $\ZZ$ and whose residue fields
contain $\cO$ as a subring.)

For any definable subassignment $X$, and for nonnegative integers $m,n,r$, write $X[m,n,r]$ for the definable subassignment sending $k$ to
$$
X(k)\times k\llp t \rrp ^m\times k^n\times \ZZ^r.
$$
For example, $h[m,n,r]$ sends $k$ to
$k\llp t \rrp ^m\times k^n\times \ZZ^r$. We will also write $X\times \ZZ^r$ for $X[0,0,r]$.

A point on a definable subassignment $X$ consists of a pair $(x,k)$ with $k$ a characteristic zero field having $\cO$ as a subring and with $x$ an element of $X(k)$. We write $|X|$ for the collection of all points that lie on $X$. (We leave it to the reader to choose whether
to consider $|X|$ as an actual class or to work in a fixed large universe.)

The usual set-theoretic operations make sense for definable subassignments. If $X(k)\subset Y(k)$ for each $k$, then we also call $X$ a definable subassignment of $Y$.

By a definable morphism $f:X\to Y$ between definable subassignments $X$ and $Y$ we mean a definable subassignment $G\subset X\times Y$ such that $G(k)$ is the graph of a function from $X(k)$ to
$Y(k)$ for each $k$ and we call $G$ the graph of $f$. We write $f_k$ for the function from $X(k)$ to $Y(k)$ with the graph $G(k)$.

Write $\Def$ for the category of definable subassignments with definable morphisms as morphisms.

\subsubsection{Definable subassignments and local fields}\label{deflf}

We have seen that behind a definable subassignment $X$ lies an $\LPas$-formula $\varphi$ which describes the sets $X(k)$. Clearly such a formula $\varphi$ corresponding to $X$ is not unique. However, if we fix such a $\varphi$ for a definable subassignment $X$ of $h[m,n,r]$, which we call fixing a representative of $X$, then for each $K\in\CO$, we can consider the subset $\varphi_K$ of $K^m\times k_K^n\times \ZZ^r$ consisting of the points satisfying $\varphi$.
Indeed, all the symbols of $\LPas$ can be interpreted in the three sorts $K,k_K,\ZZ$, where elements of $\cO[[t]]$ are interpreted in $K$ via the ring homomorphism $\cO[[t]]\to K$ coming from the ring homomorphism $\cO\to K$ and sending $t$ to the uniformizer $\varpi_K$.

For all motivic objects in this paper, we will make a link with objects (usually sets and functions) on local fields. In particular, given a definable subassignment $X$, we will often implicitly fix a representative $\varphi$ and write $X_K$ instead of $\varphi_K$ for $K$ in $\CO[,M]$ with sufficiently large $M$.
Although $X_K$ depends on the choice of $\varphi$, we have the following phenomenon:
\begin{quote}
For any two representatives $\varphi$ and $\varphi'$ of a definable subassignment $X$, there exists $M>0$ such that $\varphi_K=\varphi'_K$ for all $K$ in $\CO[,M]$.
\end{quote}

\begin{remark}
The operation of taking representatives in this context is similar to the notion of taking a model over $\ZZ$ of a variety defined over $\QQ$ in the context of algebraic geometry, as one typically does for counting the number of $\FF_q$-rational points.
\end{remark}

Similarly, any definable morphism $f:X\to Y$ between definable subassignments gives rise, up to fixing a formula $\gamma$ corresponding to the graph of $f$, to a function
$$
f_K:X_K\to Y_K,
$$
whose graph is $\gamma_K$ for any $K$ in $\CO[,M]$ with $M$ sufficiently large.
(If the characteristic of the residue field of $K$ is small, then $\gamma_K$ might
not define the graph of a function and one may define $f_K$ as being the zero function in this case, by convention.)

\subsubsection{Constructible motivic functions}\label{cmf}
Recall that $h[0,0,1]$ can be identified with $\ZZ$, since $h[0,0,1](k)=\ZZ$ for all $k$. Let $X$ be in $\Def$, that is, let $X$ be a definable subassignment.  A definable morphism $\alpha: X\to h[0,0,1]$
gives rise to a function $|X|\to\ZZ$ (also denoted by $\alpha$) sending a point $(x,k)$ on $X$ to $\alpha_k(x)$. Likewise, such $\alpha$ gives rise to the function $\LL^\alpha$ from $|X|$ to $\AA$ which sends a point $(x,k)$ on $X$ to $\LL^{\alpha_k(x)}$, and where $\AA$ is as in Section \ref{sec:qunif}.

Following \cite{CLoes}, we define the ring $\cP
(X) $ of constructible Presburger functions on $X$ as the subring of
the ring of functions $|X| \rightarrow \AA$ generated by
\begin{enumerate}
\item all constant functions into $\AA$,
\item all functions $\alpha: |X| \rightarrow \ZZ$ with $\alpha:X\to h[0,0,1]$ a definable morphism,
\item all functions of the form $\LL^{\beta}$ with $\beta:X\to h[0,0,1]$ a
definable morphism.
\end{enumerate}

Note that although $|X|$ is not a set, $\cP(X)$ can be regarded as a set since it has
not too many generators.

For $Y$ a definable subassignment of $X$, write $\11_Y$ for the characteristic function of $Y$, sending a point $(x,k)$ on $X$ to $1$ if it lies on $Y$ and to zero otherwise.

Define the  group
$\cQ(X)$
as the quotient of
the free abelian group
over symbols $[Y]$ with $Y$ a definable subassignment of $X[0,m,0]$ for some $m\geq 0$,  by the following scissor relations.
\begin{itemize}

\item[]
\begin{equation}\label{eq1}
[Y] = [Y']
\end{equation}
if there exists a definable isomorphism $Y\to Y'$ which commutes with the projections $Y\to X$ and $Y'\to X$.

\item[]
\begin{equation}\label{eq2}
[Y_1 \cup Y_2] + [Y_1 \cap Y_2]
= [Y_1] + [Y_2]
\end{equation}
for $Y_1$ and $Y_2$ definable subassignments of a common $X[0,m,0]$ for some $m$.
\end{itemize}
We will still write $[Y]$ for the class of $[Y]$ in $ \cQ  ( X ) $ for $Y\subset X[0,m,0]$. Note that in \cite{CLoes} and \cite{CLexp}, the notation $K_0(\RDef_X)$ is used instead of $\cQ(X)$.
Denote by $\cP^0 (X)$ the subring of $\cP (X)$ generated by the characteristic functions ${\bf 1}_Y$ for all definable
subassignments $Y$ of $X$ and by the constant function $\LL$.
Using the canonical ring  morphism  $\cP^0 (X) \rightarrow \cQ(X)$, sending ${\bf 1}_Y$ to $[Y]$ and $\LL$ to the class of $X[0,1,0]$, we define the ring $\cC(X)$  as
$$
\cP(X)\otimes_{\cP^0(X)}\cQ(X).
$$
Elements of $\cC(X)$ are called constructible motivic functions on $X$.

Let $F$ be a function in $\cC(X[m,0,0])$ for some $m\geq 0$. Using notation from \cite{CLoes} Section 13.2, we say that $F$ is motivically $X$-integrable if and only if its class in $C^m(X[m,0,0]\to X)$ lies in $\mathrm{I}_XC(X[m,0,0]\to X)$, where $X[m,0,0]\to X$ is the projection.
We do not need the notion of motivic integrability for the transfer principles, and we refer to \cite{CLoes} for a more detailed definition. Let us just give an intuitive explanation of motivic $X$-integrability. The condition for $F$ to be motivically $X$-integrable is a strong uniform form of the condition that for all $K$ in $\CO[,M]$ with $M$ sufficiently large, $F_K(x,\cdot)$ is integrable over $K^m$ with respect to the Haar measure, for each $x\in X_K$. This motivic condition is defined, via cell decomposition techniques, in terms of $X$-integrability of functions $G$ in $\cP(X\times \ZZ^m)$, and reduces to summability over $\ZZ^m$, as follows. A function $G$ in $\cP(X\times \ZZ^m)$ is considered $X$-integrable if and only if for each $(x,k)\in|X|$, the family  $$
(G_k(x,z)(q))_{z\in\ZZ^m}
$$
is summable (in the usual sense) for each real $q>1$, where we use evaluation at $\LL=q$ as in (\ref{aq}).
In fact, the existence of $g$ as in (\ref{g3}) of Theorem \ref{p2'mot} gives a precise relation between integrability over local fields with large residue field characteristic and motivic integrability.

\subsubsection{Constructible motivic functions and local fields}\label{cmflf}

Each $f$ in $\cP(X)$, with $X$ a definable subassignment, can be written as a finite sum of terms of the form
$a \LL^\beta \prod_{i=1}^\ell \alpha_i$
with $a\in \AA$, and the  $\alpha_i$ and $\beta$ definable morphisms from $X$ to $h[0,0,1]=\ZZ$. Let us take representatives $\alpha_i'$ and $\beta'$ of the $\alpha_i$ and $\beta$, that is, the $\LPas$-formulas describing the graphs. We have seen in Section \ref{deflf} that, for $K$ in $\CO[,M]$ with $M$ sufficiently large, $\alpha_{iK}'$ and $\beta'_K$ are the graphs of functions from $X_K$ to $\ZZ$ and we have denoted these functions by $\alpha_{iK}$ and $\beta_K$. Now we extend this notation to elements $f$ of $\cP(X)$, where we write $f_K$ for the function sending $x\in X_K$ to
$$
\sum_j a_j(q_K) q_K^{\beta_{jK}(x)} \prod_{i=1}^{\ell_j} \alpha_{ijK}(x),
$$
whenever $f$ equals
$$
\sum_j a_j \LL^{\beta_j} \prod_{i=1}^{\ell_j} \alpha_{ij},
$$
where $a_j\in \AA$, $a_{j}(q_K)$ is the evaluation of $a_j$ at $\LL=q_K$ as in (\ref{aq}), and the $\alpha_{ij}$ and $\beta_j$ are definable morphisms from $X$ to $\ZZ$. In a similar sense as in Section \ref{deflf}, the function $f_K:X_K\to \QQ$ is independent of the choice of the representatives for the $\alpha_{ij}$ and $\beta_i$ whenever $K$ is in $\CO[,M]$ with $M$ sufficiently large.

Likewise, since each $g$ in $\cQ(X)$ can be written as $[Y]-[Z]$ for some definable subassignments $Y\subset X[0,n,0]$ and $Z\subset X[0,n',0]$, by taking representatives, one can consider $Y_K$ and $Z_K$ for $K$ in  $\CO[,M]$ with $M$ sufficiently large, and we denote by $g_K$ the function on $X_K$ sending $x\in X_K$ to
$$
\# Y_x - \# Z_x,
$$
where $Y_x$ is the (finite) set $\{r\in k_K^n\mid (x,r)\in Y_K\}$ of size $\# Y_x$ and likewise for $Z_x$.

Since for $f\in\cP^0(X)$ and $f'$ its image in $\cQ(X)$ one has $f_K=f'_K$ for all $K$ in $\CO[,M]$ with $M$ sufficiently large, one can define for $F$ in $\cC(X)$ and for $K$ in $\CO[,M]$ with $M$ sufficiently large, the function $F_K$ as
$$
F_K:X_K\to\QQ : x\mapsto \sum_{i} a_{iK}(x) b_{iK}(x)
$$
whenever $F=\sum_i a_i\otimes b_i$ with $a_i\in\cP(X)$ and $b_i\in\cQ(X)$.
In our usual sense, this is independent of the choice of representatives when $K$ is in $\CO[,M]$ with $M$ sufficiently large.

\subsubsection{Motivic exponential functions}\label{cemf}

Let $X$ be in $\Def $. We consider the category $\RDefe_{X}$
whose objects are the triples $(Y, \xi, g)$ with $Y$ a definable subassignment of $X[0,n,0]$ for some $n\geq 0$, and $\xi : Y \rightarrow h[0,1,0]$ and $g : Y
\rightarrow h[1,0,0]$ definable morphisms. A morphism $(Y',
\xi', g') \rightarrow (Y, \xi, g)$ in $\RDefe_{X}$ is a
definable morphism $h : Y' \rightarrow Y$ which makes a commutative diagram with the projections to $X$ and such that $\xi' = \xi
\circ h$ and $g' = g \circ h$.

To the category $\RDefe_{X}$ one assigns a ring $\cQexp(X)$ defined as follows. As an abelian group it is the
quotient of the free abelian group over the symbols $[Y,
\xi,g]$ with $(Y, \xi,g)$ in $\RDefe_{X}$ by the
following four relations
\begin{equation}\label{r1}
[Y, \xi,g] = [Y', \xi',g']
\end{equation}
for $(Y, \xi,g)$ isomorphic to $(Y',\xi',g')$,
\begin{equation}\label{r2}
\begin{split}
[Y \cup Y', \xi, g] & + [Y \cap Y',
\xi_{|Y \cap Y'},g_{|Y \cap Y'}]\\ &= [Y, \xi_{|Y},
g_{|Y}] + [Y', \xi_{|Y'}, g_{|Y'}]
\end{split}
\end{equation}
for $Y$ and $Y'$ definable subassignments of some common $X[0,n,0]$ for some $n\geq 0$
and $\xi$, $g$ defined on $Y \cup Y'$,
 \begin{equation}\label{r3}
 [Y,\xi,g+g']=[Y,\xi+\bar g,g']
 \end{equation}
for $g:Y\to h[1,0,0]$ a definable morphism with $\ord(g(y))\geq 0$
for all $y$ in $Y$ and $\bar g$ the reduction of $g$ modulo the maximal ideal,
and
 \begin{equation}\label{r5}
 [Y[0,1,0],\xi+p,g]=0
 \end{equation}
when $p:Y[0,1,0]\to h[0,1,0]$ is the projection and when
$g$ and $\xi$ factorize through the projection
$Y[0,1,0]\to Y$.

\begin{lem}[\cite{CLexp}, Lemma 3.1.1]\label{ring}
We may endow $\cQexp(X)$ with a ring structure by setting
$$
[Y, \xi,g] \cdot  [Y', \xi',g'] = [Y \otimes_X Y',
\xi\circ p_Y + \xi'\circ p_{Y'}, g\circ p_Y + g'\circ p_{Y'}],
$$
where $Y \otimes_X Y'$ is the fiber product of $Y$ and $Y'$, $p_Y$
the projection to $Y$, and $p_{Y'}$ the projection to $Y'$.
\end{lem}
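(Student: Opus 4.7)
The plan is to verify that the proposed formula, extended $\ZZ$-bilinearly from generators, descends to a well-defined bilinear operation on the quotient $\cQexp(X)$, and then to check associativity and existence of a two-sided unit. Since the formula is symmetric in its two arguments (up to the canonical isomorphism $Y\otimes_X Y'\simeq Y'\otimes_X Y$, which is absorbed by relation (r1)), stability under the defining relations needs only be checked in the first variable.

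Relations (r1) and (r2) pose no difficulty: an $\RDefe_X$-isomorphism $h:Y\to Y_1$ commuting with $\xi,g$ induces the isomorphism $h\otimes_X \mathrm{id}_{Y'}$ of the fiber products commuting with the product data, while $\otimes_X$ distributes over finite unions and intersections of subassignments of $X[0,n,0]$, with the pulled-back characters and morphisms naturally compatible with restriction.

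The substantive verifications are for (r3) and (r5). For (r3), suppose $g_1:Y\to h[1,0,0]$ satisfies $\ord(g_1(y))\ge 0$ for all $y$. Then $g_1\circ p_Y$ inherits this property on $Y\otimes_X Y'$, and its reduction modulo the maximal ideal is $\overline{g_1}\circ p_Y$. Applying (r3) inside $Y\otimes_X Y'$ to the summand $g_1\circ p_Y$ of the third coordinate of $[Y,\xi,g_1+g_2]\cdot[Y',\xi',g']$ yields precisely $[Y,\xi+\overline{g_1},g_2]\cdot[Y',\xi',g']$. For (r5), the key observation is the canonical identification
$$Y[0,1,0]\otimes_X Y' \;\simeq\; (Y\otimes_X Y')[0,1,0]$$
(insert an auxiliary residue-field variable before or after taking the fiber product over $X$), under which the pull-back of the residue-field projection $p:Y[0,1,0]\to h[0,1,0]$ becomes the new residue-field projection. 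Under this identification, the assumption that $\xi$ and $g$ factor through $Y[0,1,0]\to Y$ implies that $\xi\circ p_{Y[0,1,0]}$ and $g\circ p_{Y[0,1,0]}$ factor through $Y\otimes_X Y'$; the terms $\xi'\circ p_{Y'}$ and $g'\circ p_{Y'}$ trivially do, so the product class has exactly the shape killed by (r5).

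I expect (r5) to be the main obstacle, not because the argument is deep but because one must carefully track which morphisms factor through which projection under the various fiber-product identifications; once these are made explicit the check becomes a literal instance of (r5) in the target. Finally, associativity reduces to associativity of $\otimes_X$ together with associativity of addition in $h[0,1,0]$ and $h[1,0,0]$, and $[X,0,0]$ serves as a two-sided unit via the canonical identifications $X\otimes_X Y\simeq Y\simeq Y\otimes_X X$, where $X$ is regarded as the subassignment $X\subset X[0,0,0]=X$ equipped with trivial residue and valued-field data.
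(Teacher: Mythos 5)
The paper does not prove this lemma at all: it is stated as a citation, Lemma 3.1.1 of \cite{CLexp}, and used without proof. So there is no ``paper's proof'' to compare against; I can only assess your argument on its own terms.

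Your proof is correct and follows the natural route. The only substantive checks are indeed that multiplication by a fixed generator preserves relations (r3) and (r5), and both of your verifications hold up. For (r3), the reduction $\overline{g_1\circ p_Y}=\overline{g_1}\circ p_Y$ is what makes the bookkeeping close, and your computation is right. For (r5), you correctly identify the essential step: under the canonical identification $Y[0,1,0]\otimes_X Y'\simeq (Y\otimes_X Y')[0,1,0]$, the map $p\circ p_{Y[0,1,0]}$ becomes the new residue projection, and all three of $\xi\circ p_{Y[0,1,0]}$, $\xi'\circ p_{Y'}$, $g\circ p_{Y[0,1,0]}$, $g'\circ p_{Y'}$ (and hence their sums) factor through $(Y\otimes_X Y')[0,1,0]\to Y\otimes_X Y'$, so the product class is literally of the shape annihilated by (r5). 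The observation that symmetry (modulo (r1) applied to the swap $Y\otimes_X Y'\simeq Y'\otimes_X Y$) lets you check stability of relations in only one variable is also fine. Associativity reduces to the canonical associativity of fiber products of subassignments of the various $X[0,n,0]$'s together with associativity of addition in the target sorts, and $[X,0,0]$ is clearly a unit under $X\otimes_X Y\simeq Y$. Distributivity is automatic from bilinearity. No gaps.
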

By Lemma 3.1.3 of \cite{CLexp} there is a natural injection of rings $\cQ(X)\rightarrow
\cQexp(X)$ sending $[Y]$ to
$[Y,0,0]$.
Hence, we may define the ring $\cCexp(X)$ of
motivic exponential functions by
\begin{equation}
\cCexp  (X):=\cC(X)\otimes_{\cQ(X)} \cQexp(X).
 \end{equation}

\begin{remark}
Note that in \cite{CLexp}, $\cQ(X)$ is denoted by $K_0(\RDef_{X})$,
$\cQexp(X)$ is denoted by $K_0(\RDef^{\rm exp}_{X})$, and  $\cCexp(X)$ is denoted by $\cC(X)^{\rm exp}$.
\end{remark}

Let $F$ be a motivic exponential function in $\cCexp(X[m,0,0])$ for some $m\geq 0$. Using notation from \cite{CLexp}, we say that $F$ is motivically $X$-integrable if and only if its class in $C^m(X[m,0,0]\to X)^{\rm exp}$ lies in $\mathrm{I}_XC(X[m,0,0]\to X)^{\rm exp}$, where $X[m,0,0]\to X$ is the projection. Again the notion of $X$-integrability essentially boils down to the condition of summability for countable families, similarly as explained at the end of Section \ref{cmf}. Property (\ref{p2'pexpmot3}) of Theorem \ref{p2'mot} gives a precise and new relation between integrability over local fields with large residue field characteristic and motivic integrability.

\subsubsection{Motivic exponential functions and local fields}\label{cemflf}

In this section we explain, following \cite{CLexp}, how to find actual functions $f_{K,\psi}:X_K\to\CC$ for $f\in\cCexp(X)$, $K$ in $\CO[,M]$ with $M$ sufficiently large, $\psi\in \cD_K$, and $X$ a definable subassignment. For $f$ in $\cC(X)$ this is already explained above in Section \ref{cmflf}. Take $G=[Y,\xi,g]$ in $\cQexp(X)$, with $Y\subset X[0,n,0]$, take representatives of $Y$, $\xi$, and $g$, and let $K$ be in $\CO[,M]$ with $M$ sufficiently large, so that $\xi_K$ and $g_K$ are functions from $Y_K$ to $k_K$, resp.~to $K$. Then we define $G_{K,\psi}$ as the function sending $x\in X_K$ to the exponential sum
$$
\sum_{r\in Y_x}\psi(\xi_K(x,r) + g_K(x,r)),
$$
which is well defined since $\psi$ is trivial on $\cM_K$, and since $\xi_K(x,r)$ can be considered as an element of $\cO_K\bmod \cM_K$.
Finally, for $f\in\cCexp(X)$, $K$ in $\CO[,M]$ with $M$ sufficiently large, and $\psi\in \cD_K$, we define $f_{K,\psi}$ by
$$
f_{K,\psi}:X_K\to \CC : x\mapsto \sum_i a_{iK}(x) b_{iK,\psi}(x)
$$
whenever $f=\sum_i a_i\otimes b_i$ with $a_i\in\cC(X)$ and $b_i\in\cQexp(X)$.

We recapitulate how $f_{K,\psi}$ is independent of the choice of representatives for $K$ in $\CO[,M]$ with $M$ sufficiently large. For any two different collections $C_1$ and $C_2$ of representatives of the $\LPas$-formulas that go into the description of $f$, there exists $M'$ such that for all $K$ in $\CO[,M']$ and all $\psi\in\cD_K$, one has that $f_{K,\psi}$ is independent of the choice between $C_1$ and $C_2$.

\subsection{The constructible setting}\label{rr}

We find motivic analogues of our thematic results, namely the analogues of the $p$-adic Theorems \ref{p1p-new}, \ref{p2p}, and \ref{p2'p}.

\begin{thm}[Integration]\label{p1qmot}
Let $f$ be in $\cC(X[m,0,0])$ for some $m\geq 0$ and some definable subassignment $X$. Then there exists $g$ in $\cC(X)$ such that for all $K$ in $\CO$ with large enough residue field characteristic,
$$
g_K(x) = \int_{y\in K^m}f_K(x,y),
$$
whenever $x\in \Int(f_K,X_K)$.
\end{thm}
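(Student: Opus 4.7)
The plan is to imitate the structure of the $p$-adic proof of Theorem \ref{p1p-new}, which combined the interpolation result Theorem \ref{p2'p} with the unconditional $p$-adic integration result (Theorem 4.2 of \cite{Ccell}). The motivic analogue has two ingredients: a motivic interpolation result (which the paper announces as Theorem \ref{p2'mot}), and the already-established motivic stability under integration from \cite{CLoes}, applicable when the integrand is genuinely motivically $X$-integrable.

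Concretely, I would proceed as follows. First, apply the motivic interpolation result Theorem \ref{p2'mot} to the given $f\in\cC(X[m,0,0])$ to produce a function $g_0\in\cC(X[m,0,0])$ which is motivically $X$-integrable and which satisfies $f_K(x,y)=g_{0,K}(x,y)$ for all $y\in K^m$, whenever $x\in\Int(f_K,X_K)$, for every $K\in\CO$ with residue characteristic large enough. The point of this step is that the locus of integrability of $f_K$ is replaced, at the cost of modifying $f$ off that locus, by the whole of $X_K$ for the new integrand $g_0$; the motivic $X$-integrability of $g_0$ is a uniform (definable) strengthening of this pointwise condition.

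Second, invoke the stability of $\cC$ under motivic integration from \cite{CLoes} (the motivic version of Theorem 4.2 of \cite{Ccell}): since $g_0$ is motivically $X$-integrable, there is a function $g\in\cC(X)$ such that for all $K\in\CO$ of sufficiently large residue characteristic and all $x\in X_K$,
\[
g_K(x)=\int_{y\in K^m} g_{0,K}(x,y)\,|dy|.
\]
Combining the two steps, for $x\in\Int(f_K,X_K)$ one has $f_K(x,\cdot)=g_{0,K}(x,\cdot)$, so
\[
g_K(x)=\int_{K^m}g_{0,K}(x,y)\,|dy|=\int_{K^m}f_K(x,y)\,|dy|,
\]
which is exactly the conclusion. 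Note that the ``large enough residue field characteristic'' bounds from the two steps have to be merged into a single threshold, which is harmless since only finitely many formulas are involved.

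The main obstacle sits entirely in the first step: everything hinges on having a motivic interpolation theorem whose conclusion upgrades pointwise integrability on $\Int(f_K,X_K)$ to the uniform motivic notion of $X$-integrability of $g_0$, and not merely to pointwise integrability over each local field. Granted that analogue of Theorem \ref{p2'p}, the integration stability step is a direct appeal to existing motivic technology. I therefore expect the real work in this section to lie in Theorem \ref{p2'mot}, and Theorem \ref{p1qmot} itself to be a short formal consequence of that result together with \cite{CLoes}.
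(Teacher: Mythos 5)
Your proposal matches the paper's proof exactly: the paper also derives Theorem~\ref{p1qmot} by first applying the interpolation Theorem~\ref{p2'mot} to obtain a motivically $X$-integrable $g_0$ agreeing with $f$ on the integrability locus, and then invoking the Specialization Principle (Proposition~\ref{SpecialP}, i.e.\ the motivic integration machinery of \cite{CLoes}/\cite{CLexp}) to produce $g\in\cC(X)$, mirroring how Theorem~\ref{p1p-new} follows from Theorem~\ref{p2'p}. You also correctly locate the genuine content in Theorem~\ref{p2'mot}'s upgrade from pointwise to motivic integrability.
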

The special case of the above theorem when $f$ is motivically $X$-integrable follows from \cite{CLoes} and \cite{CLexp}.

\begin{thm}[Correspondences of loci]\label{p2mot}
Let $f$ be in $\cC(X[m,0,0])$ for some definable subassignment $X$ and some $m\geq 0$. Then there exists $h_1,h_2,h_3\in \cC(X)$ such that, for all $K$ in $\CO$ with large enough residue field characteristic, the zero locus of $h_{iK}$ in $X_K$ equals $\Int(f_K,X_K)$, resp. $\Bdd(f_K,X_K)$ resp. $\Iva(f_K,X_K)$, when $i$ is $1$, $2$, or $3$, respectively.
\end{thm}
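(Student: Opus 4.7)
The plan is to imitate the proof of the $p$-adic Theorem \ref{p2p} (from Section \ref{proofconstrp}) but with the uniform Presburger statement (Theorem \ref{p2u}) in place of its fixed-base version, so that everything works in parallel over all $K\in\CO$ with sufficiently large residue characteristic. The ingredients should all be motivic analogues of what was used $p$-adically: a motivic cell decomposition, a notion of full cells and skeletons, and the observation that the volume of a fiber of the skeleton map is of the form $\LL^{\alpha}$ for a definable $\alpha$ (or else zero).

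More concretely, first I would apply an iterated motivic cell decomposition to the subassignment $X[m,0,0]\to X$ so that $f$ factors through a skeleton map $s_{A/X}:A\to S_{/X}(A)$ for each cell $A$; here $S_{/X}(A)$ is a definable subassignment of $X\times\ZZ^{t}$ for some $t$. Let $f_A$ be the induced motivic function on $S_{/X}(A)$. Because the fibers of $s_{A/X}$ are either of measure zero or Cartesian products of balls whose total volume is $\LL^{\alpha_A(z)}$ for some definable $\alpha_A:S_{/X}(A)\to\ZZ$ (the motivic analogue of Remark \ref{skel}, proved using the standard fact that the measure of a $p$-adic ball of valuative radius $n$ is $q_K^{-n}$, uniformly in $K$), the function $\tilde f_A(z):=f_A(z)\cdot\LL^{\alpha_A(z)}$ is again in $\cC(S_{/X}(A))$. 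The key translation step is that under specialization at $K$, the $K$-integrability of $f_K(x,\cdot)$ over the fiber of the cell reduces to the summability of $(\tilde f_A)_{K}(x,\cdot)$ over $\ZZ^{t}$ against the counting measure (and the same for boundedness and identical vanishing, on the nose).

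Next I would interpret $\tilde f_A$ as a Presburger constructible function with uniform base in the sense of Definition \ref{defu}, by using the definable morphisms and the $\LL$-powers appearing in its description; more precisely, after fixing a definable morphism $S_{/X}(A)\to X\times\ZZ^{t}$, write $\tilde f_A$ as an element of $\cP^u(X_{\ZZ}\times\ZZ^{t})$ for a suitable Presburger base $X_{\ZZ}$ obtained by projecting out the residue-field dependence (this mirrors how Corollary \ref{p2pZ} was deduced from Theorem \ref{p2}, via Proposition \ref{paramrectip} and orthogonality of sorts). Now Theorem \ref{p2u} produces three Presburger constructible functions with uniform base whose zero loci compute $\Int(\tilde f_A,\,\cdot\,)$, $\Bdd(\tilde f_A,\,\cdot\,)$, and $\Iva(\tilde f_A,\,\cdot\,)$, and the uniformity in $q=q_K$ is exactly what allows the conclusion to hold for all $K$ of sufficiently large residue characteristic simultaneously. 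Pulling these back to $X$ and gluing over the finite cell partition using the motivic analogue of Lemma \ref{inters} (which holds trivially for $\cC(X)$ via sums of squares), one obtains the desired $h_1, h_2, h_3\in\cC(X)$.

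The main obstacle I anticipate is the careful bookkeeping in the translation between the motivic constructible framework and the uniform Presburger framework: one must verify that the skeleton identification is compatible with specialization at every $K\in\CO[,M]$ with $M$ large, that the $\LL$-powers measuring fiber volumes genuinely specialize to the correct $q_K$-powers, and that the residue-field parameters are eliminable (the motivic analogue of the orthogonality argument used in Proposition \ref{paramrectip}). Once this bookkeeping is in place the result falls out of Theorem \ref{p2u}. Theorem \ref{p1qmot} then follows from Theorem \ref{p2mot} in the same way Theorem \ref{p1p-new} was deduced from Theorem \ref{p2'p}, using the stability of $\cC$ under motivic integration from \cite{CLoes}.
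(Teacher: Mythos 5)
Your outline correctly captures the ``first step'' of the paper's argument (iterated cell decomposition so that $f$ factors through a skeleton map, then the volume-of-fiber function $\LL^{\alpha}$, then a reduction to Presburger-type loci via Corollary~\ref{p2motZ}). However, there is a genuine gap concerning the base over which the cell decomposition lives.

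The uniform cell decomposition Theorem~\ref{cdmot} does \emph{not} produce a cell decomposition of $X[m,0,0]$ over $X$; it produces one only after a \emph{residual parameterization} $\sigma\colon X[m,0,0]\to X[m,0,0]_{par}\subset X[m,s,0]$, and the resulting cells are full cells over $X'=X[0,s,0]$, not over $X$. Consequently, the loci functions one extracts from Corollary~\ref{p2motZ} (or, in your version, from Theorem~\ref{p2u} after the translation you sketch) live in $\cC(X')$ with $s$ extra residue-field variables, not in $\cC(X)$. You flag a worry that ``residue-field parameters are eliminable,'' but you attribute this to orthogonality of sorts in the style of Proposition~\ref{paramrectip}. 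That mechanism is the wrong one here: orthogonality disentangles the residue-field and value-group sorts in a fixed Presburger-type set, but it does not remove the extra residue-field coordinates that have been added to the \emph{base}. The paper's actual mechanism for the second step is to push forward along the projection $v\colon X'=X[0,s,0]\to X$ by forming the motivic integral $\mu_X\bigl(h_{j,par}^2\bigr)$, i.e.\ summing over the finite fiber $v_K^{-1}(x)\subset k_K^s$, and to use the elementary positivity fact that a finite sum of squares of real numbers vanishes if and only if every summand vanishes. Without this (or an equivalent device), your proof ends with functions on $X'$ rather than the required $h_1,h_2,h_3\in\cC(X)$. Once you add this step, the rest of your outline (in particular, using Theorem~\ref{p2u} in place of the ``fixed $q$'' Theorem~\ref{p2} as the Presburger engine) is a legitimate and essentially equivalent route, with the minor bookkeeping you anticipate being the same translation that underlies Corollary~\ref{p2motZ}.
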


\begin{thm}[Interpolation]\label{p2'mot}
Let $f$ be in $\cC(X[m,0,0])$ for some $m\geq 0$ and some definable subassignment $X$. Then there exists $g$ in $\cC(X[m,0,0])$ such that the following hold for $K$ in $\CO$ with  large enough residue field characteristic.
\begin{enumerate}
\item $f_K(x,y)=g_K(x,y)$ whenever $x\in \Int(f_K,X_K)$, and for all $y\in K^m$,

\item $\Int(g_K,X_K) = X_K$,

\item\label{g3} $g$ is motivically $X$-integrable.
\end{enumerate}
\end{thm}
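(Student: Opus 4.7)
The plan is to mirror the $p$-adic proof of Theorem~\ref{p2'p} from Section~\ref{proofconstrp} within the motivic framework, reducing to the uniform-base Presburger interpolation Theorem~\ref{p2'u} in place of its fixed-base counterpart. First I would apply the motivic analogue of the cell decomposition Theorem~\ref{cd} (the $\LPas$ cell decomposition of Denef--Pas) to partition $X[m,0,0]$ into finitely many full cells $A_i$ over $X$ such that $f|_{A_i}$ factors through the skeleton map $s_{A_i/X}\colon A_i\to S_{/X}(A_i)$, where each $S_{/X}(A_i)$ is a definable subassignment of $X[0,n_i,r_i]$ built from the valuations and angular components of $y_j-c_{ij}(x,y_1,\dots,y_{j-1})$ over the nested centers. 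Writing $f|_{A_i}=f_i\circ s_{A_i/X}$ with $f_i\in\cC(S_{/X}(A_i))$, the fiber-volume function $M_i\in\cP(S_{/X}(A_i))$ is either identically zero on a definable piece of $S_{/X}(A_i)$ or of the form $\LL^{\alpha_i}$ on its complement, by the explicit description of skeleton fibers from Remark~\ref{skel}. Set $\tilde f_i:=f_i\cdot M_i\in\cC(S_{/X}(A_i))$.

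Next, I would apply the uniform Presburger interpolation Theorem~\ref{p2'u} to (the Presburger part of) $\tilde f_i$, after first absorbing the residue-field variables via orthogonality between the residue field sort and the value group sort in $\LPas$ (using the motivic parametric rectilinearization Theorem~\ref{paramrectimot}, whose proof goes through exactly as in Proposition~\ref{paramrectip}). This yields $\tilde g_i\in\cC(S_{/X}(A_i))$ which is fiberwise-summable over $S_{/X}(A_i)\to X$ at every base $q>1$ simultaneously, and which agrees with $\tilde f_i$ wherever the latter is summable. Moreover, because the uniform summability built into $\cP^u$ is exactly what is needed in the cell-decomposition-based definition of motivic integrability of \cite{CLoes}, the function $\tilde g_i$ is motivically $X$-integrable.

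Then I would define $g$ on each cell $A_i$ by
\[
g(x,y):=\begin{cases}\tilde g_i(s_{A_i/X}(x,y))/M_i(s_{A_i/X}(x,y)) & \text{if }M_i(s_{A_i/X}(x,y))\neq 0,\\ f(x,y) & \text{otherwise.}\end{cases}
\]
Division by $M_i=\LL^{\alpha_i}$ stays inside $\cC(A_i)$, and the second clause is harmless since it occurs only on fibers of $s_{A_i/X}$ of measure zero. Properties~(1) and~(2) of the theorem then follow by unwinding, for each $K\in\CO$ of large residue characteristic, the skeleton presentation of $f_K$ and comparing $\int_{K^m}$ along fibers to $\sum$ over the skeleton---this is the same bookkeeping as in the proof of Theorem~\ref{p2'p}. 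Property~(3) follows by assembling the motivic $X$-integrability of the $\tilde g_i$ over the finite cell partition, using stability of motivic integrability under finite sums and under the cell/skeleton description.

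The main obstacle will be step two: verifying that the uniform-in-$q$ Presburger summability guaranteed by Theorem~\ref{p2'u} is precisely what witnesses motivic $X$-integrability as defined via cell-decomposition in \cite{CLoes}, rather than just pointwise $L^1$-integrability on each local field. Once this bridge between $\cP^u$-level uniform summability and the motivic integrability of \cite{CLoes} is made explicit on skeleton data, the remainder of the argument is a uniform analogue of the proof of Theorem~\ref{p2'p}, with Theorem~\ref{p2'u} replacing Corollary~\ref{p2'pZ}.
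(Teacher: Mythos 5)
Your overall strategy is the right one and matches the paper's in spirit: motivic cell decomposition, factoring through the skeleton, reducing to a value-group-level interpolation, and then dividing back by the fiber-volume function. But there are two genuine gaps.

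First, Theorem~\ref{p2'u} is a statement about $\cP^u(S\times\ZZ^m)$ for a Presburger set $S$; it cannot be applied directly to the functions $\tilde f_i\in\cC(S_{/X}(A_i))$. The ring $\cC$ carries the Grothendieck-ring tensor factor $\cQ$ in which residue-field variables are \emph{bound} inside the classes $[Y]$, and more importantly the base $S_{/X}(A_i)$ still sits over $X$, which in general has free valued-field variables. Orthogonality and rectilinearization by themselves do not eliminate those. The paper bridges this via Corollary~\ref{p2'motZ}, whose proof (a) adapts the \emph{proofs} (not the statements) of the Presburger results uniformly in $\LL$, and (b) in the general case factors $f$ through a definable morphism $\Delta\colon X[0,0,m]\to h[0,n,r+m]$ so as to reduce to the case $X\subset h[0,n,r]$, where orthogonality and the Presburger arguments do apply directly. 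It is precisely this $\Delta$-factoring that produces a motivically $X$-integrable interpolant, so the point you flagged at the end as ``the main obstacle'' is not a routine verification: without it you have no control over motivic integrability, and the claim that ``$\cP^u$-level uniform summability is exactly what witnesses motivic $X$-integrability'' does not follow just from Theorem~\ref{p2'u}.

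Second, the motivic cell decomposition Theorem~\ref{cdmot} does not partition $X[m,0,0]$ directly; it produces a residual parameterization $\sigma\colon X[m,0,0]\to X[m,0,0]_{par}$ first, and only the parameterized object gets partitioned into uniform cells. Your construction therefore yields a function $g_{par}$ on $X[m,0,0]_{par}$, not on $X[m,0,0]$. The paper's proof has an explicit second step in which one sets $g=\sigma^*(g_{par})$ and checks that both the pointwise integrability conditions and motivic $X$-integrability are preserved under this pull-back (using that $\sigma$ is a definable isomorphism and a coordinate projection omitting only residue-field variables). Your proposal omits this de-parameterization step entirely. Finally, you should also note that Theorem~\ref{cdmot} is a priori only uniform over $\AO[,M]$; the extension to all of $\CO[,M]$, i.e.\ to equal characteristic, is obtained via the Ax--Kochen principle for $\LPas$, which the paper invokes as a standing device but which your argument leaves implicit.
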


\subsection{The exponential setting and transfer principles}\label{int:exp}

The following two theorems constitute the new general transfer principles of this paper, for integrability, local integrability, boundedness, and local boundedness.
\begin{thm}[Transfer principle for integrability]\label{mtrel}
Let $f$ be in $\cCexp(X[m,0,0])$ for some $m\geq 0$ and some definable subassignment $X$.
Then, for all $K\in \CO[,M]$ for some large $M$, the truth of each of the following statements depends only on (the isomorphism class of) the residue field of $K$.
\begin{enumerate}

\item For all $x\in X_K$ and for all $\psi\in \cD_K$, the function $f_{K,\psi}(x,\cdot)$ is integrable over $K^m$, that is, $\Int(X_K, f_{K,\psi}) = X_K$ for all $\psi\in \cD_K$.

\item For all $x\in X_K$ and for all $\psi\in \cD_K$, the function $f_{K,\psi}(x,\cdot)$ is \textbf{locally} integrable on $K^m$.
\end{enumerate}
\end{thm}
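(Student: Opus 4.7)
The plan is to reduce both parts of the theorem to the transfer principle for identical vanishing of motivic exponential functions established in \cite{CLexp}, by means of the motivic correspondence-of-loci result for $\cCexp$ (the motivic analog of Theorem \ref{p2pexp}) that is referred to in the introduction as Theorem \ref{p2pexpmot}.

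First I would invoke that motivic correspondence of loci to produce $h_1\in\cCexp(X)$ such that, for every $K\in\CO[,M]$ with $M$ sufficiently large and every $\psi\in\cD_K$,
$$
Z(h_{1,K,\psi}) \;=\; \Int(f_{K,\psi},X_K),
$$
where integrability is taken with respect to the Haar measure on $K^m$. Statement (1) is then equivalent to the assertion that $h_{1,K,\psi}$ vanishes identically on $X_K$ for every $\psi\in\cD_K$. For statement (2) I would use instead the local-integrability variant, namely the motivic analog of Corollary \ref{p2plocexp}, producing $h_2\in\cCexp(X)$ whose specialization $Z(h_{2,K,\psi})\subset X_K$ is exactly the set of $x$ for which $f_{K,\psi}(x,\cdot)$ is locally integrable on $K^m$; again condition (2) becomes the identical vanishing of $h_{2,K,\psi}$ for every $\psi\in\cD_K$.

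Next I would apply the transfer principle from \cite{CLexp} for identical vanishing of motivic exponential functions, specialized to the vanishing case: for $K$ of residue characteristic above a uniform threshold, whether a fixed $h\in\cCexp(X)$ satisfies $h_{K,\psi}\equiv 0$ on $X_K$ for every $\psi\in\cD_K$ depends only on the isomorphism class of $k_K$. Applying this to $h_1$ proves (1), and applying it to $h_2$ proves (2); enlarging $M$ once more, if necessary, absorbs the thresholds from the previous step and from the transfer principle.

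The main obstacle is not in this final chain of reductions but in securing the motivic correspondence of loci with the required uniformity across $K\in\CO[,M]$ and $\psi\in\cD_K$, i.e.\ in motivicizing the $p$-adic Theorem \ref{p2pexp} and its local-integrability corollary. The fixed-field proof relied essentially on the oscillation-control Proposition \ref{intCLpexp}, the $p$-adic cell decomposition of Theorem \ref{cd}, the ($1$-)Jacobian property of Proposition \ref{jacprop}, and Lemma \ref{avoidSkol}; each of these ingredients must be available in a Denef--Pas version that is valid uniformly in all local fields of sufficiently large residue characteristic. This is what Theorem \ref{p2pexpmot} is meant to deliver, and once it is in place Theorem \ref{mtrel} follows formally from the vanishing transfer principle.
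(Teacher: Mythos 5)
Your proposal matches the paper's proof exactly: both reduce statement (1) to identical vanishing of the $h_1$ supplied by the motivic correspondence-of-loci Theorem \ref{p2pexpmot}, reduce statement (2) to identical vanishing of the $h_1$ supplied by its local-integrability corollary (Corollary \ref{p2pmotlocexp}), and then conclude by the transfer principle for identical vanishing from \cite{CLexp} (Proposition \ref{strong}). You also correctly identify that the real work is in proving the motivic correspondence of loci uniformly, which is indeed where the bulk of the paper's effort goes.
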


\begin{thm}[Transfer principle for boundedness]\label{mtbound}
Let $f$ be in $\cCexp(X[m,0,0])$ for some $m\geq 0$ and some definable subassignment $X$.
Then, for all $K\in \CO[,M]$ for some large $M$, the truth of each of the following statements depends only on (the isomorphism class of) the residue field of $K$.
\begin{enumerate}

\item For all $x\in X_K$ and for all $\psi\in \cD_K$, the function $f_{K,\psi}(x,\cdot)$ is bounded on $K^m$.

\item For all $x\in X_K$ and for all $\psi\in \cD_K$, the function $f_{K,\psi}(x,\cdot)$ is \textbf{locally} bounded on $K^m$.
\end{enumerate}
\end{thm}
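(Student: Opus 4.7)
The plan is to reduce boundedness (resp.\ local boundedness) to \emph{identical vanishing} of a suitable motivic exponential function on $X$, by means of the motivic correspondences of loci (the uniform-in-$K$ analog of Theorem \ref{p2pexp}, referred to in the introduction as Theorem~\ref{p2pexpmot}), and then to appeal to an Ax--Kochen/Ershov-style transfer principle for identical vanishing that is sensitive to the isomorphism class of the residue field, not merely to its characteristic.

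Concretely, I would first apply the motivic analog of Theorem \ref{p2pexp} to $f\in\cCexp(X[m,0,0])$ to obtain $h_2\in\cCexp(X)$ such that, for all $K\in\CO[,M]$ and all $\psi\in\cD_K$,
$$
\Bdd(f_{K,\psi}(x,\cdot),\,X_K)\;=\;Z(h_{2,K,\psi}),
$$
and, for the second statement of the theorem, the motivic analog of Corollary \ref{p2plocexp} to produce $h_2'\in\cCexp(X)$ whose zero locus captures the locus of local boundedness. The proof of these motivic correspondences should parallel the $p$-adic arguments of Section~\ref{proofexpp}: one uses a motivic cell decomposition over $K$-variables to extract a skeleton in the value group, combined with the uniform-in-$q$ Presburger results of Section \ref{sec:qunif}, and controls oscillation by invoking the motivic analog of Proposition \ref{intCLpexp}.

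Second, the universal statements in (1) and (2) of Theorem \ref{mtbound} translate, in view of the previous step, to
$$
\forall\,\psi\in\cD_K,\quad h_{2,K,\psi}\equiv 0\ \text{on}\ X_K
\qquad(\text{resp.\ for }h_2'),
$$
that is, to a uniform identical vanishing condition for a motivic exponential function on $X$. At this stage one invokes the transfer principle for identical vanishing of motivic exponential functions, refined so that the output depends on the isomorphism class of $k_K$ rather than only on its characteristic. This refinement follows, for large enough residue characteristic, from the motivic machinery of \cite{CLexp} together with the classical Ax--Kochen/Ershov principle applied parametrically in the residue field sort: once the identical vanishing condition has been encoded (via $h_2$) as a statement about a motivic exponential expression on $X$, quantifier elimination in $\LPas$ leaves behind only conditions that are first-order in the residue field.

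The principal obstacle is the first step: establishing the motivic correspondences of loci in the presence of oscillation. Controlling oscillation in positive residue characteristic and simultaneously in all residue characteristics (with explicit uniformity) is the technical heart, and requires transplanting the combinatorial argument behind Proposition \ref{intCLpexp} — in particular the Fourier-analytic Corollary~\ref{linear} and the use of the $1$-Jacobian property — into the motivic framework. Once this uniformity is in place, together with the analogs of Lemma~\ref{conju} and Corollary~\ref{interspexp} giving closure of zero loci under finite unions and intersections in $\cCexp$, the reduction to identical vanishing, and hence to transfer, is immediate.
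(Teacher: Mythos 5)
Your proposal matches the paper's proof: apply Theorem~\ref{p2pexpmot} (resp.\ Corollary~\ref{p2pmotlocexp}) to obtain $h_2$ whose zero locus is the boundedness (resp.\ local boundedness) locus, observe that statements (1) and (2) become the identical vanishing of $h_{2,K,\psi}$ on $X_K$ for all $\psi$, and conclude by the abstract transfer principle for identical vanishing of motivic exponential functions, which is exactly Proposition~\ref{strong} (\cite{CLexp}, Proposition 9.2.1). The only caveat is that your parenthetical heuristic for why that proposition holds (``quantifier elimination in $\LPas$ leaves behind only first-order conditions in the residue field'') understates the difficulty --- vanishing of a motivic exponential function is not first-order --- but since you are invoking the result from \cite{CLexp} as a black box, just as the paper does, this does not affect the correctness of your argument.
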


The transfer principles will follow from
motivic analogues of our thematic results, which we now state in our final, exponential setting.

\begin{thm}[Integration]\label{p1qmotexp}
Let $f$ be in $\cCexp(X[m,0,0])$ for some $m\geq 0$ and some definable subassignment $X$. Then there exists $g$ in $\cCexp(X)$ such that the following holds for all   $K$ in $\CO$ with large enough residue field characteristic, and for all $\psi\in \cD_K$,
$$
g_{K,\psi}(x) = \int_{y\in K^m}f_{K,\psi}(x,y),
$$
whenever $x\in \Int(f_{K,\psi},X_K)$.
\end{thm}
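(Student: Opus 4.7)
The plan is to follow the same pattern used in the $p$-adic setting to prove Theorem \ref{p1pexp}, namely to first replace $f$ by an interpolating function with maximal integrability locus which is moreover motivically $X$-integrable (so that the existing motivic integration machinery of \cite{CLexp} applies directly), and then to invoke the motivic integration theorem from \cite{CLexp} on the interpolant.

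First, I would establish (or invoke) a motivic exponential analogue of Theorem \ref{p2'mot}: for any $f\in \cCexp(X[m,0,0])$ there exists $g_0\in \cCexp(X[m,0,0])$ such that, for all $K$ in $\CO$ with large enough residue field characteristic and all $\psi\in\cD_K$, one has $\Int(g_{0,K,\psi},X_K)=X_K$, $g_0$ is motivically $X$-integrable, and $f_{K,\psi}(x,y)=g_{0,K,\psi}(x,y)$ whenever $x\in \Int(f_{K,\psi},X_K)$ and $y\in K^m$. The proof of such an interpolation result in the motivic exponential setting would proceed by combining the non-exponential motivic interpolation (Theorem \ref{p2'mot}) with the oscillation-control Proposition \ref{intCLpexp} and its motivic counterpart, exactly as Theorem \ref{p2'pexp} is obtained in the $p$-adic case from Corollary \ref{corHX} and \ref{intCLpexp}; moreover, as in \ref{p2'pexp}, one arranges that $g_0$ is a finite sum of terms of the form $a\cdot [Y,\xi,h]$ where the $\cC$-part is $X$-integrable.

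Second, with such a $g_0$ in hand, motivic $X$-integrability by construction means that its class lies in $\mathrm{I}_X C(X[m,0,0]\to X)^{\rm exp}$, so the motivic fibre-integration operator of \cite{CLexp} (the exponential analogue of \cite[Theorem 4.2]{Ccell}, made uniform in $K$ and $\psi$ by \cite{CLexp}) produces a function $g\in \cCexp(X)$ whose specializations $g_{K,\psi}$ realize $\int_{K^m} g_{0,K,\psi}(x,y)\,|dy|$ for all sufficiently large residue characteristics and all $\psi\in\cD_K$. Combining with the interpolation property of $g_0$, one gets $g_{K,\psi}(x)=\int_{K^m}f_{K,\psi}(x,y)\,|dy|$ whenever $x\in \Int(f_{K,\psi},X_K)$, as required.

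The main obstacle is the motivic exponential interpolation step: producing $g_0$ requires a uniform (in $K$ and $\psi$) version of Proposition~\ref{intCLpexp}, because one needs to separate the ``large'' part of $f$ from its oscillatory cancellations in a way that is compatible with motivic cell decomposition and independent of the choice of the local field $K$ and of the character $\psi\in\cD_K$. All tools are available in principle: the motivic Jacobian/cell-decomposition techniques of \cite{CLoes,CLexp}, the motivic parametric rectilinearization (the motivic counterpart of Theorem~\ref{paramrectip}), and a motivic version of Lemma~\ref{avoidSkol}. The delicate point is to perform the combinatorial bookkeeping (the induction on the total number of oscillatory summands, as in the proof of Proposition~\ref{intCLpexp}) uniformly across the specializations, so that the ``ball-splitting'' argument via Corollary~\ref{linear} can be executed at the motivic level using the residue-field sort and the angular component. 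Once this uniform interpolation is in place, the integration step is essentially automatic from \cite{CLexp}.
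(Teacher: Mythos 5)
Your proposal is correct and follows essentially the same route as the paper: the paper proves Theorem~\ref{p1qmotexp} by first obtaining the interpolation result Theorem~\ref{p2'pexpmot} (constructed exactly along the lines you describe, via the uniform oscillation-control Proposition~\ref{intCLpexpmot} together with Corollary~\ref{corHXmot}), and then applying the Specialization Principle (Proposition~\ref{SpecialP} of \cite{CLexp}) to the motivically $X$-integrable interpolant. Your ``delicate point'' about carrying out the induction on oscillatory summands uniformly is precisely what the paper handles via residual parameterizations in Proposition~\ref{intCLpexpmot}.
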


\begin{thm}[Correspondences of loci]\label{p2pexpmot}
Let $f$ be in $\cCexp(X[m,0,0])$ for some definable subassignment $X$ and some $m\geq 0$.
Then there exist $h_1,h_2,h_3\in \cCexp(X)$ such that, for all $K$ in $\CO$ with large enough residue field characteristic and
for each $\psi\in\cD_K$, the zero locus of $h_{iK,\psi}$ in $X_K$ equals respectively
$
\Int(X_K, f_{K,\psi}),
$
$
\Bdd(X_K, f_{K,\psi}),
$ and  $
\Iva(X_K, f_{K,\psi}),
$
for $i=1$, $2$, or $3$ respectively.
\end{thm}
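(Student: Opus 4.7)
The plan is to mimic the $p$-adic proof of Theorem \ref{p2pexp} after putting in place the corresponding motivic tools, which will then specialize uniformly for all $K\in\CO[,M]$ with $M$ large enough and all $\psi\in\cD_K$. The first step I would carry out is to establish a motivic version of Proposition \ref{intCLpexp}: for $f\in\cCexp(U)$ on a definable subassignment $U\subset X[m,0,0]$, there exist a definable morphism $\varphi:U\to V$ over $X$ (with $V$ a definable subassignment of $X[0,n,r]$), definable morphisms $h_i:U\to h[1,0,0]$, and motivic exponential functions $G_i\in\cCexp(V)$, such that after specialization $f_{K,\psi}(x,y) = \sum_i G_{i,K,\psi}(\varphi_K(x,y))\,\psi(h_{iK}(x,y))$ for all $K\in\CO[,M]$ and $\psi\in\cD_K$, together with the volume inequality of \ref{intCLpexp}. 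The proof follows the same induction as in Section \ref{proofexpp} with the motivic cell decomposition and the motivic Jacobian property (standard tools from \cite{CLoes} and \cite{CLexp}) replacing the $p$-adic counterparts; the key point is that the whole construction is carried out at the level of $\LPas$-formulas, so uniformity in $K$ and $\psi$ is automatic for $K$ in $\CO[,M]$ with $M$ large enough.

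Next I would prove a motivic analogue of Corollary \ref{corHX} with $\cH(X)=\cCexp(X)$: any $H\in\cCexp(X)\otimes_{\cC(X)}\cC(X[m,0,0])$ admits $\widetilde h_1,\widetilde h_2,\widetilde h_3\in\cCexp(X)$ such that, after specialization, the zero locus of $\widetilde h_{iK,\psi}$ in $X_K$ equals $\Int$, $\Bdd$, respectively $\Iva$ of $H_{K,\psi}(x,\cdot)$ on $K^m$. Since $H$ is a finite sum $\sum_j c_j\cdot H'_j$ with $c_j\in\cCexp(X)$ and $H'_j\in\cC(X[m,0,0])$, this reduces to finitely many applications of Theorem \ref{p2mot} (the constructible case already established) combined with a motivic version of Lemma \ref{inters}/Corollary \ref{interspexp}; for the $\cCexp$ intersection step one takes products with complex conjugates in the sense of the motivic analogue of Lemma \ref{conju}, which is immediate from the definition of $\cQexp(X)$ by sign-flipping the $g$ in each generator $[Y,\xi,g]$.

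With these two ingredients in hand, the main argument runs exactly as in the proof of Theorem \ref{p2pexp}. Write $f=\sum_i H_i\cdot\psi_K(h_i)$ using the motivic Proposition \ref{intCLpexp}, where $H_i:=G_i\circ\varphi\in\cCexp(X)\otimes_{\cC(X)}\cC(X[m,0,0])$ because $H_i$ is constant along the fibres of $\varphi$ that refine the $K$-variable. Since each $H_{i,K,\psi}(x,\cdot)$ is constant on every $U_{x,r}$, the volume inequality in the motivic \ref{intCLpexp} yields, for each $K\in\CO[,M]$ and $\psi\in\cD_K$,
\[
\Iva(f_{K,\psi},X_K)=\bigcap_i\Iva(H_{i,K,\psi},X_K),
\]
and the analogous equalities for $\Bdd$ and $\Int$ (the reverse containments being immediate as in \ref{p2pexp}). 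Applying the motivic $\cCexp$-analogue of Corollary \ref{corHX} to each $H_i$ and combining the resulting zero loci via the motivic analogue of Corollary \ref{interspexp} produces the required $h_1,h_2,h_3\in\cCexp(X)$.

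The hardest step will be the motivic version of Proposition \ref{intCLpexp}, because its $p$-adic proof uses a delicate partitioning of balls governed by the residues $\res(\gamma_{x,r}b_{x,r,\ell,i})\in k_K$ and an application of Corollary \ref{linear} on the residue field. To make this work uniformly in $K$ and $\psi$, I would need to phrase the whole inductive construction in terms of definable subassignments $\varphi$, definable selections of balls, and definable partitions of residue-field data, and verify that the resulting bound $d$ and the final decomposition are independent of $K$; the Denef-Pas framework with its angular component map $\ac$ is designed precisely to allow such uniform treatment, but carrying this out carefully (and bookkeeping the single $M$ that works throughout) is the genuine technical work.
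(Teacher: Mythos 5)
Your overall plan correctly identifies the three main ingredients (a motivic version of Proposition~\ref{intCLpexp}, a motivic $\cCexp$-version of Corollary~\ref{corHX}, and a motivic version of Corollary~\ref{interspexp}) and the intersection formulas for $\Iva$, $\Bdd$, $\Int$; this does mirror the skeleton of the paper's argument. But there is a genuine gap that your proposal glosses over with the phrase ``uniformity in $K$ and $\psi$ is automatic'': the paper's motivic version of Proposition~\ref{intCLpexp} (its Proposition~\ref{intCLpexpmot}) only holds \emph{up to a residual parameterization} $\sigma:U^0\to U^0_{par}\subset U^0[0,s,0]$, i.e., after introducing $s$ auxiliary residue-field variables. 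This is unavoidable: in the $p$-adic proof one repeatedly ``cuts $U$ into finitely many pieces'', but the number of such pieces grows with $q_K$, so uniformly in $K$ they must be indexed by residue-field points rather than by a fixed finite index set. The consequence is that the entire first part of the argument produces functions on $X^0[0,s,0]$, not on $X^0=X$; the $H_i$ land in $\cCexp(X^0[0,s,0])\otimes \cC(\cdots)$, and the resulting $h_1,h_2,h_3$ naturally live in $\cCexp(X^0[0,s,0])$. Your proposal stops there, claiming these are ``the required $h_1,h_2,h_3\in\cCexp(X)$'' --- but they are not in $\cCexp(X)$.

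The paper handles this with an explicit second step that you are missing: starting from $h_{j,par}\in\cCexp(X[0,s,0])$, one forms $h_{j,par}\overline{h_{j,par}}$ (using the motivic complex conjugate of Lemma~\ref{conjumot}, which you did anticipate) and pushes forward along the projection $X[0,s,0]\to X$ by the motivic integration operator $\mu_X$ of \cite{CLexp} --- in this situation just summation over the finitely many residue-field fibres. Since $(h_{j,par}\overline{h_{j,par}})_{K,\psi}\ge 0$ pointwise, the resulting sum over the fibre vanishes at $x\in X_K$ if and only if $h_{j,par,K,\psi}$ vanishes on the whole fibre over $x$, so the pushforward has exactly the desired zero locus and now lies in $\cCexp(X)$. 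Without this descent step your construction does not yield elements of $\cCexp(X)$ as the theorem demands. You should also note that the same residual-parameterization issue already arises in the uniform cell decomposition (Theorem~\ref{cdmot}) and in the uniform analogue of Lemma~\ref{avoidSkol}, so the two-step ``parameterize, then integrate out'' structure is forced on the whole argument, not an optional refinement.
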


Theorem \ref{p2pexpmot} implies the following corollary by the same reasoning as for Corollary \ref{p2plocexp}. The analogue of Corollary  \ref{p2ploc} (i.e., the statement without the exponentials) in the motivic context holds similarly but is left to the reader.
\begin{cor}\label{p2pmotlocexp}
Let $f$ be in $\cCexp(X[m,0,0])$ for some definable subassignment $X$ and some $m\geq 0$.
Then there exist functions $h_1$ and $h_2$ in $\cCexp(X)$ such that, for all $K$ in $\CO$ with large enough residue field characteristic and
for each $\psi\in\cD_K$, the zero locus of $h_{1K,\psi}$ in $X_K$ equals
$$
\{x\in X_K\mid f_{K,\psi}(x,\cdot) \mbox{ is locally integrable on }K^m\},
$$
and the zero locus of $h_{2K,\psi}$ in $X_K$ equals
$$
\{x\in X_K\mid f_{K,\psi}(x,\cdot) \mbox{ is locally bounded on }K^m\}.
$$
\end{cor}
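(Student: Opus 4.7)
My plan is to derive this corollary from Theorem~\ref{p2pexpmot} by emulating the proof of Corollary~\ref{p2plocexp} (which itself mirrors the $p$-adic argument of Corollary~\ref{p2ploc}). The idea is to apply the motivic ``correspondences of loci'' theorem twice, with a definable cutoff in between that parameterizes the family of all unit balls in $K^m$.

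First I would reformulate local integrability and local boundedness as conditions ranging over a definable family of balls. Since every compact subset of $K^m$ is covered by finitely many translates of $\cO_K^m$, the function $f_{K,\psi}(x,\cdot)$ is locally integrable on $K^m$ if and only if, for every $y_0\in K^m$, the product $\11_{y_0+\cO_K^m}\cdot f_{K,\psi}(x,\cdot)$ is measurable and integrable over $K^m$; the analogous equivalence holds for local boundedness. This realizes the family of all unit balls as the fibers of a definable subassignment whose parameter $y_0$ is a valued-field tuple, so the cutoff can be incorporated into the motivic framework.

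Next, I would introduce the enlarged parameter subassignment $X':=X[m,0,0]$ with coordinates $(x,y_0)$ and the motivic exponential function
$$
\tilde f(x,y_0,y):=\11_U(x,y_0,y)\cdot f(x,y)\ \in\ \cCexp(X'[m,0,0]),
$$
where $U\subset X'[m,0,0]$ is the definable subassignment cut out by the conditions $\ord(y_i-y_{0,i})\geq 0$ for $i=1,\dots,m$. Applying Theorem~\ref{p2pexpmot} to $\tilde f$ produces $h_1',h_2'\in\cCexp(X')$ whose zero loci (for all $K$ of sufficiently large residue characteristic and all $\psi\in\cD_K$) equal $\Int(\tilde f_{K,\psi},X'_K)$ and $\Bdd(\tilde f_{K,\psi},X'_K)$ respectively, i.e., the set of pairs $(x,y_0)$ for which $\11_{y_0+\cO_K^m}\cdot f_{K,\psi}(x,\cdot)$ is integrable, resp.~bounded, on $K^m$.

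By the reduction of the first paragraph, the local integrability locus of $f_{K,\psi}$ at $x$ is precisely the set of $x\in X_K$ such that $h'_{1K,\psi}(x,\cdot)$ vanishes identically on $K^m$, that is, $\Iva(h'_{1K,\psi},X_K)$ once $h_1'$ is viewed as an element of $\cCexp(X[m,0,0])$; similarly for local boundedness with $h_2'$. A second application of Theorem~\ref{p2pexpmot} (this time invoking its $\Iva$ clause) therefore yields the required $h_1,h_2\in\cCexp(X)$. The only step that genuinely requires care is the reduction in paragraph one, where one must verify that the covering-by-unit-balls argument respects the measurability requirement built into Definition~\ref{loci}; this is routine, since measurability of $f_{K,\psi}(x,\cdot)$ on $K^m$ follows from measurability on each translate $y_0+\cO_K^m$ via the countable cover of $K^m$ by representatives of $K^m/\cO_K^m$. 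Everything else is a direct two-step invocation of the already-proven Theorem~\ref{p2pexpmot}.
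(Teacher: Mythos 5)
Your proposal is correct and follows essentially the same route as the paper's proof, which simply invokes ``the same reasoning as for Corollary \ref{p2plocexp}'' (itself mirroring Corollary \ref{p2ploc}): reformulate local integrability/boundedness via a definable family of balls, apply the correspondences-of-loci theorem to the truncated function to get an auxiliary $h'_i$ on the enlarged parameter space, and then apply the $\Iva$ clause to eliminate the ball parameters. The only cosmetic difference is that you parameterize only unit balls by their centers, whereas the paper's analogue allows an extra radius parameter; for the local conditions in question the unit-ball family is already sufficient, so this is a harmless simplification.
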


\begin{thm}[Interpolation]\label{p2'pexpmot}
Let $f$ be in $\cCexp(X[m,0,0])$ for some definable subassignment $X$  and some $m\geq 0$. Then there exist $g$ in $\cCexp(X[m,0,0])$ and $M>0$ such that for all $K$ in $\CO[,M]$ and all $\psi\in\cD_K$ one has
\begin{enumerate}
\item $f_{K,\psi}(x,y)=g_{K,\psi}(x,y)$ whenever  $x$ lies in $\Int(X_K, f_{K,\psi})$,

\item $\Int(X_K, g_{K,\psi}) = X_K$,

\item\label{p2'pexpmot3} $g$ is motivically $X$-integrable.
\end{enumerate}
\end{thm}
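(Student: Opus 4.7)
The plan is to adapt the $p$-adic interpolation Theorem~\ref{p2'pexp} to the uniform motivic setting. I would follow essentially the same three-step strategy, but each tool must be available motivically and work uniformly for all $K\in\CO[,M]$ and all $\psi\in\cD_K$ at once.

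First, I would establish a motivic/uniform analogue of Proposition~\ref{intCLpexp}: given $f\in\cCexp(X[m,0,0])$, there exist a definable surjection $\varphi:X[m,0,0]\to V$ over $X$ with $V\subset X[0,n,r]$, definable morphisms $h_i:X[m,0,0]\to h[1,0,0]$, and $G_i\in\cCexp(V)$ such that $f=\sum_i (G_i\circ\varphi)\cdot\psi_K(h_i)$, and such that the volume inequality $\Vol(U_{x,r})\le\LL^{d}\Vol(W_{x,r})$ holds motivically. The $p$-adic proof uses only $\LPas$-definable cell decomposition, the Jacobian property, Lemma~\ref{avoidSkol}, and the elementary Fourier lemma on finite residue fields (Corollary~\ref{linear}); all of these have motivic counterparts in \cite{CLexp} and \cite{CLip}, and Corollary~\ref{linear} specializes from a single uniform definable construction. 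The inductive argument on the total number of summands goes through verbatim, provided the residue characteristic is larger than some $M$ depending on $f$.

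Second, set $H_i:=G_i\circ\varphi\in\cCexp(X[m,0,0])$; these are constructible exponential motivic functions that factor through $\varphi$ and hence ``have no genuine oscillation in $y$.'' From the volume inequality of step one, specialized to each $(K,\psi)$, one obtains (exactly as in the proof of Theorem~\ref{p2pexp}) the key identities $\Int(X_K,f_{K,\psi})=\bigcap_i\Int(X_K,H_{iK,\psi})$ and analogously for $\Bdd$ and $\Iva$, valid for all $K\in\CO[,M]$. Because $H_i$ factors through $\varphi$ which lands in the residue-field and value-group sorts, integrability of $H_{iK,\psi}(x,\cdot)$ reduces, via the skeleton/fibers of $\varphi$, to a summability question in $\cC(V)$-variables with $\cCexp(X)$-coefficients, that is, to the framework of a motivic analogue of Corollary~\ref{corHX} with $\cH(X)=\cCexp(X)$, itself obtained from Theorem~\ref{p2'mot}.

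Third, apply this constructible interpolation to each $H_i$ to produce $g_i\in\cCexp(X[m,0,0])$ with $\Int(X_K,g_{iK,\psi})=X_K$, agreeing with $H_{iK,\psi}$ on $\Int(X_K,H_{iK,\psi})$, and such that each $g_i$ is a finite sum of terms $f_{0}\cdot\xi$ with $f_0\in\cC(X[m,0,0])$ motivically $X$-integrable and $\xi\in\cQexp(X[m,0,0])$ factoring suitably. Define $g:=\sum_i g_i\,\psi_K(h_i)$. Conclusion (1) is immediate from the containment $\Int(X_K,f_{K,\psi})\subset\bigcap_i\Int(X_K,H_{iK,\psi})$; conclusion (2) follows since $\Int(X_K,g_{iK,\psi})=X_K$ for every $i$; conclusion (3), motivic $X$-integrability of $g$, is where I would invoke Theorem~8.6.1 of \cite{CLexp}, whose stringent integrability hypothesis is met precisely because the $g_i$ are built to be motivically $X$-integrable.

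The main obstacle is the first step: formulating and proving the motivic version of Proposition~\ref{intCLpexp} with the volume comparison holding uniformly in $K$ and $\psi$. The delicate point is that the inductive refinement uses Fourier analysis on residue fields of varying (but large) characteristic, so one must check that the partition into the sets $S_{\ell j}$ defined via residue-field equalities comes from a single $\LPas$-definable partition, and that the choice of $\gamma_{x,r}$ and of balls $B'_\xi$ can be made uniformly definably. Once this motivic oscillation-control proposition is in place, the remainder of the argument is a direct transcription of the proofs of Theorems~\ref{p2'pexp} and~\ref{p2'mot}, combined with the integration result of \cite{CLexp} to obtain motivic $X$-integrability.
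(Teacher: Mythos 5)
Your overall strategy matches the paper's: prove a uniform, motivic version of Proposition~\ref{intCLpexp}; use the volume comparison to obtain $\Int(X_K,f_{K,\psi})=\bigcap_i\Int(X_K,H_{iK,\psi})$ (and analogously for $\Bdd$ and $\Iva$); and interpolate each $H_i$ via a $\cCexp(X)$-coefficient form of the constructible interpolation (the paper's Corollary~\ref{corHXmot}). But there is a genuine structural gap in how you handle the motivic Proposition~\ref{intCLpexp}. You state it as producing a definable surjection $\varphi: X[m,0,0]\to V$ over $X$ directly. In fact the $p$-adic proof relies on choices --- the partitions $S_{\ell j}(x,r)$ defined by residue-field equalities, the elements $\gamma_{x,r}$, the balls $B'_\xi$ selected by the Fourier lemma --- that, uniformly over $K$, are only definable after introducing new residue-field variables. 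Consequently the motivic statement, Proposition~\ref{intCLpexpmot}, only delivers $\varphi$, $h_i$, $G_i$ after a \emph{residual parameterization} $\sigma: U^0 \to U^0_{par}$, i.e.\ over the enlarged base $X^0[0,s,0]$ rather than over $X=X^0$. You flag this as a ``delicate point'' but do not resolve it, and as written your construction produces a $g$ over the wrong base.

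The paper's proof therefore has a second, de-parameterization step that your proposal omits: having constructed $g_{par}\in\cCexp(X[m,0,0]_{par})$ satisfying the conclusions with $X$ replaced by $X[0,s,0]$, one sets $g := \sigma^*(g_{par})\in\cCexp(X[m,0,0])$ and checks that $g_K=g_{par,K}\circ\sigma_K$ (so the interpolation property and $\Int(X_K,g_{K,\psi})=X_K$ transfer) and that pull-back along $\sigma$, a coordinate projection forgetting only residue-field variables, preserves motivic $X$-integrability. Without this step conclusion~(3) over the original $X$ is not established. A more minor divergence: you propose to obtain conclusion~(3) by invoking Theorem~8.6.1 of \cite{CLexp}, whereas the paper extracts motivic $X$-integrability from the strengthened constructible interpolation (part~(\ref{g3}) of Theorem~\ref{p2'mot}, folded into Corollary~\ref{corHXmot}) and propagates it through $\sigma^*$; Theorem~8.6.1 is really what is needed for the Integration Theorem~\ref{p1qmotexp}, not for establishing membership in the class of motivically $X$-integrable functions.
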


\begin{remark}\label{remgeneral}
By standard techniques of motivic integration,
all the above results (\ref{mtrel} to \ref{p2'pexpmot}, and similarly
in the previous sections) imply the corresponding results where
$X[m,0,0]$ is replaced by an arbitrary subassignment $U \subset X[m,0,0]$.
\end{remark}

\subsection{Proofs of the motivic results}
We begin with some preliminaries.
\begin{defn}\label{def:par}
Consider a definable subassignment $X$. A residual parameterization of $X$ is by definition a definable isomorphism over $X$ of the form
$$
\sigma:X\to X_{par}\subset X[0,m,0]
$$
for some $m\geq 0$.
For $F:X\to Y$ a definable morphism, write $F_{par}$ for the corresponding definable morphism $F\circ \sigma^{-1}:X_{par}\to Y$. Likewise, given $f\in \cCexp(X)$, write $f_{par}$ for the natural corresponding function in $\cCexp(X_{par})$, and so on.
\end{defn}

Note that parameterizing is a way of working piecewise in a uniform way (creating, for each $K$, at most $\# k_K^s$ pieces for a parameterization $\sigma$ which introduces $s$ new residue field variables, the pieces being the fibers in of the coordinate projection to $k_K^s$).
In \cite{CLexp}, for $\sigma:X\to X_{par}$ and $f\in \cCexp(X)$, one denotes $f_{par}$ by $(\sigma^{-1})^*(f)$, which is the compositional pull-back of $f$ along $\sigma^{-1}$ and which becomes the actual composition $f_K\circ\sigma^{-1}_K$ when specializing for $K$ in $\CO[,M]$.
Up to residual parameterization (i.e., up to replacing $X$ by $X_{par}$ for a well-chosen residual parameterization),
all results of Section \ref{prelimp} go through in a uniform way, as we now explain.

\begin{defn}[Uniform cells]\label{def::cellmot}
Consider $A \subset  \Lambda[1,0,0]$ for some definable subassignment $\Lambda$. Then $A$ is called a uniform $1$-cell, resp.~a uniform $0$-cell, over $\Lambda$ if there exists $M>0$
such that for all $K$ in $\AO[,M]$ one has that
$A_K$ is a $p$-adic $1$-cell, resp.~a $p$-adic $0$-cell, over $\Lambda_K$.
\end{defn}

The next theorem follows from Denef-Pas cell decomposition \cite{Pas} and the results of \cite{CLip}, Section 6.
\begin{thm}[Uniform version of the cell decomposition Theorem~\ref{cd} and the Jacobian property Proposition~\ref{jacprop}]\label{cdmot}
Consider $X \subset  \Lambda[1,0,0]$ with $\Lambda$ and $X$ definable subassignments and let $f_j:X\to h[0,0,1]$ and $F:X\to h[1,0,0]$ be definable morphisms and let $g_j$ be in $\cC(X)$.
Then there exists a residual parameterization $\sigma:X\to X_{par}\subset X[0,m,0]$ for some $m\geq 0$, such that the following holds.
There exist $M>0$ and a finite partition of $X_{par}$ into definable subassignments $A$ such that for all $K$ in $\AO[,M]$, one has that the sets $A_K$ and the restrictions of the functions $f_{j,par,K}$ and $g_{j,par,K}$, resp.~$F_{par,K}$,
to $A_K$ are as in Theorem \ref{cd}, resp.~Proposition~\ref{jacprop}, with $Y =\Lambda[0,m,0]_K$.
\end{thm}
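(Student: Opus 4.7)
The plan is to combine the Denef--Pas cell decomposition theorem in the language $\LPas$ with the motivic Jacobian property of \cite{CLip}, Section~6, and then to verify that both inputs specialize correctly to the $p$-adic statements of Theorem \ref{cd} and Proposition \ref{jacprop} for every $K \in \AO[,M]$ with $M$ sufficiently large.

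First, I would apply Denef--Pas cell decomposition to $X \subset \Lambda[1,0,0]$, taken together with the morphisms $f_j$ and with the $\ZZ$-valued morphisms $\beta, \alpha_i$ that arise in a presentation of each $g_j \in \cC(X)$ as a finite sum of terms $a \LL^\beta \prod \alpha_i$ (with $a \in \AA$ and $\beta, \alpha_i$ definable morphisms into $h[0,0,1]$). This yields a finite partition of $X$ into Denef--Pas cells over $\Lambda$ on which all of these morphisms become Presburger-linear in $\ord(t - c(\lambda))$; after specialization, this forces each $g_{j,K}$ to be constant on every ball contained in a cell fibre, giving the final clause of Theorem \ref{cd}. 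Denef--Pas cells, however, only impose an $\ac = \ac_1$ condition, whereas Definition \ref{def::cell} requires the stronger $\ac_m$-condition. To bridge this gap I would introduce a residual parameterization $\sigma: X \to X_{par} \subset X[0,m-1,0]$ whose extra residue-field coordinates record the higher digits of $\ac_m(t - c(\lambda))$; each Denef--Pas cell pulled back along $\sigma^{-1}$ then refines to a disjoint union of cells of the form demanded by Definition \ref{def::cell}.

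Second, to deal with the valued-field-valued morphism $F$, I would invoke the motivic $1$-Jacobian statement of \cite{CLip}, Section~6: after a further residual parameterization and a further refinement of the cells---again all $\LPas$-definable and uniform in $K$---the restriction of $F$ to each resulting $1$-cell is either constant in $t$ or sends each ball to a ball via a map that, upon specialization, satisfies the $1$-Jacobian property of Definition \ref{def1jacprop}. Residual parameterizations compose and cell refinements pull back along them, so the two steps combine into a single residual parameterization $\sigma: X \to X_{par}$ and a single partition of $X_{par}$ meeting all requirements simultaneously.

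The specialization step is then essentially automatic: all the data produced above are $\LPas$-definable, and for $K \in \AO[,M]$ with $M$ large enough---so that the finitely many integer bounds, residue-field data, and $n$-th power conditions appearing in the decomposition evaluate correctly---the motivic cell decomposition specializes to a $p$-adic one of the form of Theorem \ref{cd}, and the motivic Jacobian property specializes to Proposition \ref{jacprop}. The main obstacle I anticipate is bookkeeping rather than conceptual: one must confirm that the motivic $1$-Jacobian statement of \cite{CLip} specializes to the full angular-component clauses e) and f) of Definition \ref{def1jacprop} (and not merely to the weaker Jacobian property of Definition \ref{defjacprop}), and that the $\ac_m$-parameterization of the first step remains compatible with the further refinement introduced for $F$.
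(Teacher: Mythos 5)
Your proposal matches the paper's approach: the paper simply states that the theorem follows from Denef--Pas cell decomposition \cite{Pas} together with \cite{CLip}, Section~6, and your argument is a faithful elaboration of how these two inputs combine via residual parameterization and specialization. The caveat you raise about whether \cite{CLip} delivers the full $1$-Jacobian clauses e) and f) of Definition~\ref{def1jacprop} is resolved positively, since the paper's own Proposition~\ref{jacprop} already attributes the $1$-Jacobian property to \cite{CLip}, Section~6.
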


From now on we will work and prove results for all $K$ in $\CO[,M]$ for some $M$, instead of only in $\AO[,M]$, which will be allowed by the uniform nature of the above results and by the classical Ax-Kochen principle of \cite{AK2} for first order statements in the language $\LPas$. 
This variant of the classical Ax-Kochen principle follows directly from the quantifier elimination result of \cite{Pas}; we will use this variant, for first order statements in the language $\LPas$, as an ingredient in our proofs. For a recent geometric treatment of classical Ax-Kochen principles, see \cite{DenefCol}.

We give uniform variants of some Presburger results. 
\begin{prop}[Uniform Rectilinearization]\label{paramrectimot}
Let $Y$ and $X\subset Y\times \ZZ^m$ be definable subassignments. Then there exist finitely many
definable subassignments $A_i\subset Y\times \ZZ^{m}$ and $B_i\subset Y\times \ZZ^{m}$
and definable isomorphisms $\rho_i:A_i\to B_i$ over $Y$ such that the following holds for large enough $M$ and each $K$ in $\CO[,M]$.
The sets $A_{i,K}$ are disjoint and their union equals $X_K$, and
for every $i$, the function $\rho_{i,K}$ is linear over $Y_K$ and for each $y\in Y_K$, the set
$B_{i,K,y}$ is
a set of the form
$\Lambda_y\times \NN^{\ell_i}$ for a finite subset $\Lambda_y\subset \NN^{m-\ell_i}$ depending on $y$, with
an integer $\ell_i\geq 0$ only depending on
$i$.
\end{prop}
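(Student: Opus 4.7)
The plan is to imitate the proof of the $p$-adic Proposition~\ref{paramrectip}, but carried out entirely at the level of definable subassignments so that the output specializes uniformly to all $K \in \CO[,M]$ for $M$ sufficiently large. The base case is Theorem~\ref{paramrecti}, which is purely a statement about Presburger sets and is therefore already uniform in any extraneous parameters.

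The first step is to reduce to parameter spaces involving no valued-field variables. By the Denef--Pas quantifier elimination for $\LPas$ (\cite{Pas}), after fixing $M$ large enough, there exist nonnegative integers $s,t$, a definable morphism $\nu: Y \to h[0,s,t]$, and a definable subassignment $X' \subset h[0,s,t] \times \ZZ^m$ such that for every $K \in \CO[,M]$ and every $y \in Y_K$ one has $X_{K,y} = X'_{K,\nu_K(y)}$. A rectilinearization of $X'$ over $h[0,s,t]$ --- i.e.\ pieces $A'_i \subset h[0,s,t] \times \ZZ^m$ and Presburger-linear bijections $\rho'_i : A'_i \to B'_i$ with target fibers of the shape $\Lambda \times \NN^{\ell_i}$ --- pulls back under $\nu \times \mathrm{id}_{\ZZ^m}$ to a rectilinearization of $X$ over $Y$ with the same integers $\ell_i$ and the same fibrewise structure, because linearity over the base and the fibrewise shape are both preserved under precomposition with a morphism between base subassignments.

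The second step is to reduce from residue-field/value-group parameters to pure Presburger parameters, using the orthogonality of the residue-field and value-group sorts in Denef--Pas theory: every definable subassignment of $h[0,s,t]$ is a finite disjoint union of products $R \times P$, with $R$ definable purely in the residue-field sort and $P$ definable purely in the Presburger sort. Partitioning the base into such products, on each piece the residue-field coordinate is a spectator parameter that plays no role in a Presburger-linear rectilinearization, and the problem reduces to the setting where the parameter space is Presburger. Theorem~\ref{paramrecti} then supplies the required finite partition and the linear Presburger bijections; all of this data is Presburger-definable, hence lifts immediately to definable subassignments with the claimed behavior.

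The main obstacle is uniformity in $K$: one must ensure that the partitions, definable morphisms, and $\LPas$-formulas produced in the reductions above are part of a single finite body of $\LPas$-data that behaves identically for every $K \in \CO[,M]$, rather than being chosen field-by-field. This uniformity is provided by the Denef--Pas quantifier elimination --- which furnishes quantifier-free equivalents of our formulas that remain valid across all such $K$ simultaneously --- together with the Ax--Kochen-style transfer invoked right after Theorem~\ref{cdmot}. Together, these tools yield a single threshold $M$ and a single finite family $(A_i, B_i, \rho_i, \ell_i)$ that works for every $K \in \CO[,M]$ at once.
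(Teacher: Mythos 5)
Your proposal is correct and follows essentially the same route as the paper, which explicitly instructs the reader to repeat the argument of Proposition~\ref{paramrectip} with Denef--Pas quantifier elimination over $\LPas$ in place of the fixed-field quantifier elimination: eliminate valued-field variables from the parameter space via a definable morphism $\nu$ into residue-field and value-group sorts, split off the residue-field coordinates by orthogonality, and invoke Theorem~\ref{paramrecti} on the remaining Presburger parameters. The one small point worth noting is that the Ax--Kochen transfer you mention at the end is not actually needed here, since the Pas quantifier elimination already applies uniformly to all of $\CO[,M]$, both equal and mixed characteristic; the paper's Ax--Kochen remark after Theorem~\ref{cdmot} concerns a different step.
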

\begin{proof}
The 
proof goes exactly the same way as the one for Proposition \ref{paramrectip}, using the quantifier elimination result for valued field variables of \cite{Pas} instead of the one for fixed $K$. (As an alternative proof one may again adapt the proof of Theorem 3 of \cite{CPres}.)
\end{proof}

The following uniform variants of Corollaries \ref{p2pZ} and \ref{p2'pZ} will be used to prove Theorems \ref{p2mot} and \ref{p2'mot}.
\begin{cor}\label{p2motZ}
Let $f$ be in $\cC(X\times \ZZ^m)$ for some definable subassignment $X$ and some $m\geq 0$. Then there exist
$h_1,h_2$ and $h_3$ in $\cC(X)$ such that for large enough $M$ and each $K$ in $\CO[,M]$
\begin{equation}\label{p2eqmotZ}
\Int(f_{K},X_{K}) = Z(h_{1,K}),
\end{equation}
\begin{equation}\label{p2bmotZ}
\Bdd(f_{K},X_{K}) = Z(h_{2,K}),
\end{equation}
and
\begin{equation}\label{p3motZ}
\Iva(f_{K},X_{K}) = Z(h_{3,K}).
\end{equation}
\end{cor}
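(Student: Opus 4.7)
The plan is to mimic the proof of Corollary~\ref{p2pZ}, replacing the fixed $p$-adic rectilinearization there with its uniform motivic counterpart, and drawing the concrete loci computations from the uniform-base Presburger framework of Section~\ref{sec:qunif}.

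First I would apply Proposition~\ref{paramrectimot} to $X\times\ZZ^m$, after first partitioning so that the Presburger components going into the definition of $f$ are piecewise linear. This yields a finite list of definable subassignments $A_i\subset X\times\ZZ^m$ with linear-over-$X$ isomorphisms $\rho_i:A_i\to B_i$ such that, for every $K\in\CO[,M]$ and every $x\in X_K$, the fiber $B_{i,K,x}$ has the rectilinear shape $\Lambda_x\times\NN^{\ell_i}$. Pushing $f$ forward by $\rho_i^{-1}$ and refining (possibly with a residual parameterization, as in Theorem~\ref{cdmot}, to separate the coefficients), I may assume on each piece that
$$
f(x,y) = \sum_{k=1}^{r} c_k(x)\, y^{a_k}\, \LL^{b_k\cdot y},
$$
with $a_k\in\NN^m$, $b_k\in\ZZ^m$, and $c_k\in\cC(X')$ for $X'$ the base of the piece.

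Next I would transport to the Presburger setting with uniform base. Setting $\LL=q$ for real $q>1$, the specialized $c_k$ take values in $\AA$, so the right-hand side above lies in $\cP^u$ over the base, tensored with $\cC(X')$. Theorem~\ref{p2u}, applied simultaneously to integrability, boundedness, and identical vanishing, produces functions $h^{(j)}_i$ on the base whose zero loci, jointly in the parameters and in $q$, cut out the three desired loci. These functions are built from $\AA$, from Presburger morphisms of $x$, and from expressions $\LL^{\gamma(x)}$, hence lift canonically to elements of $\cC(X)$. Combining the pieces via the $\cC(X)$-analog of Lemma~\ref{inters} (sums of squares for intersections, products for unions, together with the complex conjugation trick of Lemma~\ref{conju} unneeded here since $f$ is real-valued) assembles the global $h_1,h_2,h_3\in\cC(X)$.

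Finally, the uniform matching of zero loci with the analytic loci for every $K\in\CO[,M]$ is guaranteed because the conclusions of Section~\ref{sec:qunif} are valid for \emph{all} real $q>1$, and specializing $\LL\mapsto q_K$ translates the Presburger-with-uniform-base conclusions directly into the desired conclusions over $K$; the fact that this works simultaneously over $\AO[,M]$ and $\BO[,M]$ is exactly where the Ax--Kochen style principle from \cite{Pas} (as used in the proof of Proposition~\ref{paramrectimot}) enters. The main obstacle is technical bookkeeping to ensure that the per-piece $h^{(j)}_i$ glue to well-defined elements of $\cC(X)$ rather than only to functions on pieces of $X$, but this is handled by extending by zero and then forming sums of squares, exactly as in the last step of the proof of Theorem~\ref{p2p}.
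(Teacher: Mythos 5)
Your approach matches the paper's: both replace the Presburger rectilinearization with the uniform motivic version (Proposition~\ref{paramrectimot}) and then run the loci argument of Section~\ref{s:pres} uniformly in $q$, assembling the pieces via extension-by-zero and sums of squares, exactly as in the proof chain Theorem~\ref{p2} $\to$ Corollary~\ref{p2pZ} $\to$ Corollary~\ref{p2motZ}. The one caveat is that Theorem~\ref{p2u} is stated only for Presburger-set bases, so with a general definable subassignment $X$ and constructible (not merely Presburger) coefficients $c_k\in\cC(X')$ one must rerun the argument rather than cite it as a black box; you effectively acknowledge this by noting that the $h^{(j)}_i$ ``lift canonically to $\cC(X)$,'' and your precautionary mention of a residual parameterization is in fact unnecessary here since no valued-field variable is being integrated.
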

\begin{cor}\label{p2'motZ}
Let $f$ be in $\cC(X\times \ZZ^m)$ for some definable subassignment $X$ and some $m\geq 0$.
Then there exists $g$ in $\cC(X\times \ZZ^m)$ with $\Int(g,X)=X$ and such that for large enough $M$ and each $K$ in $\CO[,M]$ one has
$f_K(x,y)=g_K(x,y)$ whenever $x$ lies in $\Int(f_K,X_K)$. Moreover, one can take $g$ motivically $X$-integrable.
\end{cor}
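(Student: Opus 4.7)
The plan is to mimic, in the motivic setting, the combined proof of Theorems \ref{p2} and \ref{p2'} (and the deduction of Corollaries \ref{p2pZ} and \ref{p2'pZ}), with three key adaptations: replace Theorem \ref{paramrecti} by the uniform Proposition \ref{paramrectimot}, exploit the fact that Lemmas \ref{lemqexp0} and \ref{lemqcon0} are already uniform in $q$, and invoke the Ax--Kochen variant mentioned in the paragraph above to pass from $\AO[,M]$ to $\CO[,M]$.

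First I would apply Proposition \ref{paramrectimot} to $X \times \ZZ^m$ (viewed as a definable subassignment of $X[0,0,m]$). This yields finitely many definable pieces $A_i$ together with definable isomorphisms $\rho_i : A_i \to B_i$ over $X$ which, for all $K \in \CO[,M]$ with $M$ large, become linear bijections whose fibers $B_{i,K,s}$ have the product form $\Lambda_{s}\times \NN^{\ell_i}$. Treating one piece at a time (and gluing in the end via the $\cC$-analogue of Lemma \ref{inters} obtained as in Corollary \ref{interspexp}), and pulling $f$ back along $\rho_i^{-1}$, I may assume $f$ is a constructible motivic function on $B_i$. By the structure of elements of $\cC$, and possibly after a further partition to linearize all Presburger data appearing in $f$, the specialization $f_K$ admits on each piece a presentation
\begin{equation*}
f_K(s,y) \;=\; \sum_{i=1}^r c_{i,K}(s)\, y^{a_i}\, q_K^{b_i\cdot y},
\end{equation*}
with $a_i \in \NN^m$, $b_i \in \ZZ^m$ pairwise distinct tuples (independent of $K$), and $c_{i,K}$ the specializations of fixed constructible motivic coefficients $c_i \in \cC(X)$.

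Now the proofs of Theorems \ref{p2} and \ref{p2'} for $Z(h_3)$ rely on: a uniform bound on the number of zeros of each fiber from Lemma \ref{lemqexp0}, a fixed-size family of definable ``Skolem'' functions $H_1,\ldots,H_M$ enumerating small fibers, and explicit vanishing of coefficients on large fibers. Since Lemma \ref{lemqexp0} is uniform in $q$, the same construction produces definable $H_j$ (now as definable morphisms in $\LPas$) and hence a function in $\cC(X)$ whose specialization zero locus is $\Iva(f_K,X_K)$ for every $K \in \CO[,M]$. For (\ref{p2eqmotZ}) and (\ref{p2bmotZ}) I would repeat the second half of the proof of Theorem \ref{p2}, again using that Lemma \ref{lemqcon0} is uniform in $q$: integrability (summability over the $\NN^{\ell_i}$ factor) is equivalent to the identical vanishing on $X\times\ZZ^m$ of the constructible sub-sum obtained by keeping only terms with some $b_{ij}\geq 0$ in a $\NN$-direction, and similarly with $b_{ij}>0$ or ($b_{ij}=0$ and $a_{ij}>0$) for boundedness. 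Applying the case (\ref{p3motZ}) to these auxiliary constructible motivic functions yields $h_1$ and $h_2$.

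For Corollary \ref{p2'motZ}, on each rectilinearized piece I define $g_K$ by discarding exactly those summands that violate $b_{ij}<0$ in all the $\NN$-directions, as in the last formula of the proof of Theorem \ref{p2'}. This construction is literally a constructible-motivic operation (removing a definable finite set of terms), so it packages into a single $g\in\cC(X\times\ZZ^m)$; by construction $f_K$ and $g_K$ agree on $\Int(f_K,X_K)$ and $\Int(g_K,X_K)=X_K$ for all $K\in\CO[,M]$. To upgrade to \emph{motivic} $X$-integrability, I would observe that on each rectilinearized piece $g$ is a sum of terms of the form $c_i(s)\,y^{a_i}\LL^{b_i\cdot y}$ with $b_{ij}<0$ on the $\NN^{\ell_i}$-factor (and $\Lambda_s$ finite on the remaining coordinates), so the motivic summation machinery of \cite{CLoes} (Theorem-Definition~4.5.1) applies to give $X$-integrability in the sense of \cite{CLoes}. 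The main obstacle, and the reason this is not entirely automatic, is bookkeeping: ensuring that the Skolem functions, the partitions, and the auxiliary constructible functions constructed in the Presburger argument can all be realized as single objects in the motivic formalism valid over $\cO[[t]]$, so that a uniform Ax--Kochen transfer gives the claimed statement simultaneously for all $K\in\CO[,M]$; this is however handled precisely by Proposition \ref{paramrectimot} and the $\LPas$-quantifier elimination of \cite{Pas}.
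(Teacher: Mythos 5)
Your proposal follows the paper's approach closely for the bulk of the corollary: use Proposition~\ref{paramrectimot} in place of Theorem~\ref{paramrectip}, observe that Lemmas~\ref{lemqexp0} and~\ref{lemqcon0} are uniform in~$q$, and replay the argument of Theorems~\ref{p2} and~\ref{p2'}; this is exactly what the paper means by ``the same proofs apply.''

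However, for the ``Moreover'' clause your argument has a real gap. You assert that the rectilinearized form of $g$ lets you invoke Theorem-Definition~4.5.1 of \cite{CLoes} to conclude motivic $X$-integrability, and you acknowledge a bookkeeping issue but say it is ``handled by Proposition~\ref{paramrectimot} and $\LPas$-quantifier elimination.'' The problem is that Proposition~\ref{paramrectimot} guarantees the rectilinearization structure only \emph{after specialization} to $K\in\CO[,M]$ with $M$ large, whereas motivic $X$-integrability is a condition on the class of $g$ that must hold uniformly over all characteristic-zero coefficient fields $k$ (for each $(x,k)\in\lvert X\rvert$ and each real $q>1$). When $X$ has valued-field variables, the local-field structure given by Proposition~\ref{paramrectimot} is not by itself enough to establish this. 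The paper's actual argument closes this gap with a specific two-step reduction: first it treats the case $X\subset h[0,n,r]$ (no valued-field variables), where the proofs of Corollaries~\ref{p2pZ} and~\ref{p2'pZ} carry over directly by orthogonality of the value-group and residue-field sorts; then it handles general $X$ by producing, via $\LPas$-quantifier elimination, a definable morphism $\Delta\colon X[0,0,m]\to h[0,n,r+m]$ over $\ZZ^m$ and an $H\in\cC(h[0,n,r+m])$ with $\Delta^*(H)=f$, interpolating $H$ to a motivically $h[0,n,r]$-integrable $g_H$, and finally setting $g=\Delta^*(g_H)$. Your proposal does not identify this pull-back reduction, which is the key step that makes ``bookkeeping'' more than a slogan.
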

\begin{proof}[Proofs of Corollaries \ref{p2motZ} and \ref{p2'motZ}]
The same proofs as those of Corollaries \ref{p2pZ} and \ref{p2'pZ} apply, where one uses Theorem \ref{paramrectimot} instead of Theorem \ref{paramrectip}.
We us give some extra details how one can ensure that $g$ can be taken motivically $X$-integrable. If $X\subset h[0,n,r]$ for some $n$ and some $r$, then the proofs of Corollaries \ref{p2pZ} and \ref{p2'pZ} directly adapt by the definition of $\cC(h[0,n,r])$ and the orthogonality between the value group and residue field in $\LPas$. (This orthogonality is the property that any definable subassignment of $h[0,n,r]]$ equals a finite union of Cartesian products of definable subsets of $\ZZ^r$ and of $h[0,n,0]$ which follows from quantifier elimination of valued field variables in the language $\LPas$, see \cite{Pas}.) The general case follows from the observation, which also follows from quantifier elimination of valued field variables in $\LPas$, that there exists a definable morphism
$$
\Delta:X[0,0,m]\to h[0,n,r+m]
$$
over $\ZZ^m$, and a motivic constructible function $H$ in $\cC(h[0,n,r+m])$, such that the compositional pull-back $\Delta^*(H)$ (with notation from \cite{CLoes}) equals $f$. The just obtained special case that $X\subset h[0,n,r]$, applied to $H$, yields a motivically $h[0,n,r]$-integrable function $g_H$ in $\cC(h[0,n,r+m])$. This allows us to put $g=\Delta^*(g_H)$ as required by Corollary \ref{p2'motZ}.
\end{proof}

The uniform analogue of Lemma \ref{avoidSkol} goes as follows.

\begin{lem}\label{avoidSkolmot}
Let $A^0\subset Y^0[1,0,0]$ be a definable subassignment, and let $h^0:A^0\to h[1,0,0]$ be a definable morphism for some definable subassignment $Y^0$. Let $M_0\geq 0$ be given.  Suppose that for each $K\in \CO[,M_0]$, for each $y\in Y^0_K$, and for each maximal ball $B$ contained in $A^0_{K,y}$, $h^0_K(y,\cdot)$ is constant modulo $(\varpi_K)$ on $B$.
Then there exists a residual parameterization $\sigma:A^0\to A^0_{par}\subset A^0[0,s,0]$ for some $s\geq 0$, such that, if we write $A$ for $A^0_{par}$, $Y$ for $Y^0[0,s,0]$  and $h$ for $h^0\circ\sigma^{-1}$, the following holds. There exist $M\geq M_0$, a finite partition of $A$ into definable subassignments $A_j$ and definable morphisms $h_{j}:Y\to h[1,0,0]$ such that for each $K$ in $\CO[,M]$
$$
|h_K(y,t)-h_{j,K}(y)| \leq 1
$$
holds for each $(y,t)\in A_{j,K}$ and for each $j$.
\end{lem}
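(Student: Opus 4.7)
My plan is to follow the proof of Lemma~\ref{avoidSkol} step by step, replacing each ingredient that is specific to a fixed local field by its uniform motivic counterpart, and handling every step that uses a ``Skolem-type'' selection (such as picking finitely many centers indexed by $\cO_K/(\varpi_K^{m_j-1})$) by introducing an appropriate residual parameterization. The hypothesis on constancy modulo $(\varpi_K)$ is first-order uniform in $K$, so the classical Ax--Kochen principle together with Theorem~\ref{cdmot} will give us, after parameterization, a picture on definable subassignments that specializes to the $p$-adic picture for all $K\in\CO[,M]$ with $M$ large.

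First, I apply Theorem~\ref{cdmot} to the morphism $h^0$ viewed as a function of the $h[1,0,0]$-coordinate with $Y^0$ as parameter. This furnishes a first residual parameterization $\sigma_1\colon A^0 \to A^{(1)}\subset A^0[0,s_1,0]$ and a partition of $A^{(1)}$ into uniform cells over $Y^{(1)}:=Y^0[0,s_1,0]$ such that on each cell $h^{(1)}:=h^0\circ\sigma_1^{-1}$ is, after specialization at $K$, either constant in the last valued-field coordinate or has the $1$-Jacobian property on each maximal ball in each $Y^{(1)}_K$-fiber. The constant case is immediate (take $h_j:=h^{(1)}$). For the non-constant parts, I form the definable subassignment $\widetilde{W}\subset Y^{(1)}[1,0,0]$ that represents the image of $h^{(1)}$ (we may arrange fiberwise injectivity by further refinement, as in the $p$-adic proof), and apply Theorem~\ref{cdmot} a second time to $\widetilde{W}$. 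This yields a further residual parameterization $\sigma_2$ and a decomposition of $\widetilde{W}$ into uniform cells $D_j$ over $Y^{(1)}$, each with a definable center $c_j$ and, in the $1$-cell case, a definable angular component of depth $m_j\geq 1$.

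The computation that for $K\in\CO[,M]$ with $M$ large, for any $(y,t)$ with $h^{(1)}_K(y,t)\in D_{j,K}$ one has $|h^{(1)}_K(y,t)-c_{j,K}(y)|\leq q_K^{m_j-1}$, goes through exactly as in the proof of Lemma~\ref{avoidSkol}: the hypothesis forces the $1$-Jacobian image of each maximal ball to have diameter at most $q_K^{-1}$, which, combined with the angular component constraint on $D_j$, forces $\ord(h^{(1)}_K(y,t)-c_{j,K}(y))\geq 1-m_j$. When $m_j=1$, setting $h_j:=c_j$ already delivers the required bound. When $m_j>1$, I introduce a third residual parameterization $\sigma_3$ adding $m_j-1$ residue-field variables $\bar\eta_1,\ldots,\bar\eta_{m_j-1}$ that record the higher-precision digits of the angular component $\ac_{m_j}\!\bigl(h^{(1)}(y,t)-c_j(y)\bigr)$, and define a refined center $h_j$ on each subpiece by correcting $c_j$ with a definable valued-field term built from these new residue-field variables and from~$t$ (which is interpreted as the uniformizer). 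Setting $\sigma:=\sigma_3\circ\sigma_2\circ\sigma_1$ and taking $M\geq M_0$ large enough for all uniform ingredients to apply gives the required data.

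The main obstacle, as I anticipate it, is the last step: in the $p$-adic proof ``we can finish by further partitioning using the finiteness of the residue field'' is a one-line remark, because for fixed $K$ one may pick arbitrary representatives in $\cO_K$ for the classes in $\cO_K/(\varpi_K^{m_j-1})$. In the uniform motivic setting, since $\LPas$ has no canonical section $k\to\cO$, one must encode these representatives as additional residue-field variables via $\sigma_3$ \emph{and} exhibit $h_j$ as a genuine $\LPas$-definable morphism with values in $h[1,0,0]$. This is done by using the distinguished element~$t$ of~$\LPas$ (which specializes to~$\varpi_K$) together with definable ``digit'' terms built from the new $\bar\eta_i$, so that $h_j$ is defined as $c_j$ plus an explicit $\LPas$-term in $y,\bar\eta_1,\ldots,\bar\eta_{m_j-1}$; the bound $|h_K(y,t)-h_{j,K}(y)|\leq 1$ on each subpiece then follows from the precision estimate of Step 3.
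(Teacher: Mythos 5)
Your early steps --- parameterizing, invoking the uniform cell decomposition and Jacobian property of Theorem~\ref{cdmot}, passing to the image $\widetilde W$ of $h$ and decomposing it into uniform cells $D_j$ with centers $c_j$ --- parallel the $p$-adic proof of Lemma~\ref{avoidSkol} correctly. The problem is the final step. You anticipate a case $m_j>1$ and propose to handle it by a further residual parameterization recording the higher digits of $\ac_{m_j}\bigl(h^{(1)}(y,t)-c_j(y)\bigr)$ in residue-field variables $\bar\eta_1,\dots,\bar\eta_{m_j-1}$, and then defining a corrected center $h_j$ as ``$c_j$ plus an explicit $\LPas$-term in $y,\bar\eta_1,\dots,\bar\eta_{m_j-1}$''. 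This cannot be done: in the Denef--Pas language $\LPas$ the only cross-sort symbols are $\ord\colon\mathrm{VF}\to\mathrm{VG}$ and $\ac\colon\mathrm{VF}\to\mathrm{RF}$; there is no map from the residue-field sort into the valued-field sort, hence no section $k\to\cO$ and no Teichm\"uller lift. Consequently no $\LPas$-term of valued-field sort can depend on the residue-field variables $\bar\eta_i$, and multiplying $\bar\eta_i$ by powers of $t$ does not typecheck. A residual parameterization (which, by Definition~\ref{def:par}, only introduces residue-field variables) cannot manufacture the valued-field correction you need, so your proposed $\sigma_3$ step fails.

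The reason this is not fatal for the lemma --- and this is precisely the point the paper makes --- is that in the motivic setting the case $m_j>1$ never arises. Theorem~\ref{cdmot} rests on Denef--Pas cell decomposition \cite{Pas}, carried out entirely in $\LPas$, where only $\ac=\ac_1$ is available; hence the uniform cells it yields always have angular-component depth $m=1$. The higher-depth maps $\ac_m$ exist in the fixed-field language $\cL_K^3$ and are what generate the $m_j>1$ case in Lemma~\ref{avoidSkol}, which for fixed $K$ is resolved by the finiteness of $k_K$. Over the motivic base neither phenomenon occurs: the cells are coarser but of depth $1$, so one finishes immediately with $h_j:=c_j$. (The paper in fact gives an even shorter route: the statement is Theorem~2.2.2 of \cite{CLexp}, which you did not use.)
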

\begin{proof}
This follows from Theorem 2.2.2 of \cite{CLexp}, or, alternatively, from the same proof as the one of Lemma \ref{avoidSkol}, where automatically all the $m_j$ are equal to $1$ at the end of that proof.
\end{proof}

Up to using a residual parameterization, the uniform version of Proposition \ref{intCLpexp} also holds, as follows.

\begin{prop}\label{intCLpexpmot}
Let $X^0$ and $U^0 \subset X^0[m,0,0]$ be definable subassignments and
let $f^0_1, \dots, f^0_s$
be in $\cCexp(U^0)$.
There exists a residual parameterization $\sigma:U^0\to U^0_{par}\subset U^0[0,s,0]$ for some $s\geq 0$, such that, if we write $X$ for $X^0[0,s,0]$, $U$ for $U^0_{par}$ and $f_{\ell}$ for $f_{\ell}^0\circ \sigma^{-1}$, then the following holds.
There exist an integer $d\geq 0$, definable morphisms $h_{\ell i}:U\to h[1,0,0]$, a definable surjection $\varphi:U\to V\subset X[0,0,t]$ for some $t\geq 0$, and functions $G_{\ell i}$ in $\cCexp(V)$ such that for each $K$ in $\CO$ with large enough residue field characteristic and for each $\psi$ in $\cD_K$, conditions 1) and 2) of Proposition \ref{intCLpexp} hold for $d$, $f_{\ell K,\psi}$, $U_K$, $V_K$, $\varphi_K$, $h_{\ell i K}$, and $G_{\ell i K,\psi }$.
\end{prop}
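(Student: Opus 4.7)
The plan is to follow the structure of the proof of Proposition \ref{intCLpexp} line by line, replacing each $p$-adic tool by its uniform motivic analogue. Specifically, Theorem \ref{cd} and Proposition \ref{jacprop} are replaced by the uniform cell decomposition Theorem \ref{cdmot}, Lemma \ref{avoidSkol} is replaced by Lemma \ref{avoidSkolmot}, and all partitions that were made pointwise in the $p$-adic proof are replaced by $\LPas$-definable partitions (with the residue field sort enlarged via a residual parameterization). Each successive tool may introduce new residue field variables, but these can be folded into a single residual parameterization $\sigma:U^0\to U^0_{\mathrm{par}}$ at the end by composition, matching the statement of the proposition.

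First I would reduce to the case $m=1$ by induction on $m$, exactly as in the proof of Proposition \ref{intCLpexp}, since the inductive step there is purely formal (composing two $\varphi$'s and summing two $h$'s) and stays within the constructible exponential framework. For the $m=1$ case I would then induct on $N=\sum_\ell(N_\ell-1)$, maintaining the auxiliary conditions 3) (fibers of $\varphi$ are either singletons or maximal balls) and 4) (factorization through a prescribed $\varphi_0$). The base case $N=0$ is immediate since $|G_{\ell 1}|_{\CC}=|f_\ell|_{\CC}$ on each $U_{x,r}$, so $W_{x,r}=U_{x,r}$ and one takes $d=0$.

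For the induction step I would apply Theorem \ref{cdmot} (uniform cell decomposition plus Jacobian property) to the $\cC$-parts of the $f_\ell$ and to the functions $h_{\ell i}$; this produces a residual parameterization, after which we may assume $U$ is uniformly a cell over $X$ with center $c$, and, for each $1$-cell, we get the uniform analogues of the Jacobian identities \eqref{jac1}--\eqref{jac2}, with the constants $b_{x,r,\ell,i}$ now being the values of a definable morphism into $h[1,0,0]$. The choice of $\gamma_{x,r}$ such that $|\gamma_{x,r}|\cdot\max_{\ell,i}|b_{x,r,\ell,i}|=1$ is $\LPas$-definable. The equivalence relation \eqref{res-equal} defining the partition $\{S_{\ell j}\}$ is $\LPas$-definable because $\res$ is expressible via $\ord$ and $\ac$; after a further residual parameterization indexing these partitions by residue field variables, we may assume the sets $S_{\ell j}$ do not depend on $(x,r)$. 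The case where all $h_{\ell i}(x,\cdot)$ are constant modulo $(\varpi_K)$ on $U^1_{x,r}$ is dispatched uniformly by Lemma \ref{avoidSkolmot}, reducing it to the $N=0$ case. Otherwise, applying the induction hypothesis to the family $(f_{\ell j})_{\ell,j}$ gives the required $\varphi$, $G_{\ell ji}$, $h_{\ell ji}$; the volume estimate $\Vol(U_{x,r})\leq q_K^{d+1}\Vol(M_{x,r})$ then follows from the uniform Fourier estimate (Corollary \ref{linear}), which is valid for $\FF_{q_K}$ of arbitrary size.

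The main obstacle will be the bookkeeping of the successive residual parameterizations together with ensuring that all constructions (in particular the choice of $\gamma_{x,r}$, the partition by $\res$-classes, and the selection of $y_0\in\FF_{q_K}$ from Corollary \ref{linear}) stay $\LPas$-definable uniformly across $K\in\CO[,M]$. The Fourier-theoretic selection of $y_0$ looks non-definable at first sight, but in our proposition we do not need to name a single such $y_0$: it suffices to take $W_{x,r}$ to be \emph{all} $y\in U_{x,r}$ for which the inequality $\sup_{\ell,i}|G_{\ell i}(x,r)|_{\CC}\le\sup_\ell|f_\ell(x,y)|_{\CC}$ holds, which is an intrinsically defined subset. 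The existence of such $y$ with the right volume is what Corollary \ref{linear} provides, uniformly in $q_K$, and the set $W_{x,r}$ itself need not be described by an $\LPas$-formula. Once all these ingredients are in place, the conclusion of the proposition holds for all $K\in\CO[,M]$ and $\psi\in\cD_K$ by construction, with $M$ chosen large enough to make each invocation of Theorems \ref{cdmot}, \ref{paramrectimot} and Lemma \ref{avoidSkolmot} valid.
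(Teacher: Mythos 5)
Your overall plan --- follow the proof of Proposition~\ref{intCLpexp} line by line, replacing Theorem~\ref{cd} and Proposition~\ref{jacprop} by the uniform Theorem~\ref{cdmot}, replacing Lemma~\ref{avoidSkol} by Lemma~\ref{avoidSkolmot}, and folding all the residue-field variables introduced along the way into a single residual parameterization $\sigma$ --- is exactly the route the paper takes, and your account of the reduction to $m=1$, the induction on $N=\sum_\ell(N_\ell-1)$, the definability of $\gamma_{x,r}$ and of the $\res$-class partition, and the remark that $W_{x,r}$ need not itself be an $\LPas$-definable set are all in order.

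There is, however, one genuine omission. In the step where all $h_{\ell i}(x,\cdot)$ are constant modulo $(\varpi_K)$ on $U^1_{x,r}$, Lemma~\ref{avoidSkolmot} alone does not bring you back to $N=0$. In the $p$-adic argument one writes $\psi_K(h_{\ell i}) = \psi_K(h_j)\cdot\psi_K(h_{\ell i}-h_j)$ and notes that the second factor, whose argument has absolute value $\leq 1$ and is constant modulo $\cM_K$ on each fiber of $\varphi_1$, is determined by a definable residue-field value factoring through $\varphi_1$; since $\cCexp(V_1)$ there consists of actual $\CC$-valued functions on a fixed $K$ (and constants from $K$ may be named), this factor immediately lives in $\cCexp(V_1)$. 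In the motivic setting $\cQexp$ is a quotient of a free abelian group by relations, and the corresponding rewriting must be performed inside that ring. This is exactly what relation~(\ref{r3}), $[Y,\xi,g+g']=[Y,\xi+\bar g,g']$ for $\ord(g)\geq 0$, provides: it converts $[U,0,h_{\ell i}]$ into $[U,\overline{h_{\ell i}-h_j},h_j]$, so the bounded part is absorbed into the residue-field datum $\xi$, which in turn factors through $\varphi_1$. Your proposal never invokes relation~(\ref{r3}), while the paper names it explicitly alongside Lemma~\ref{avoidSkolmot}; without it the reduction to the $N=0$ basis does not close at the motivic level. A secondary point: Theorem~\ref{cdmot} is stated for $K\in\AO[,M]$ only, so extending the conclusion to $\CO[,M]$ relies on the Ax--Kochen transfer for $\LPas$-sentences, a step worth making explicit rather than bundling into ``$M$ large enough for each invocation.''
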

\begin{proof}
The proof of Proposition \ref{intCLpexp} works uniformly in $K$ in $\AO[,M]$ for large enough $M$, where one uses Lemma \ref{avoidSkolmot} and relation (\ref{r3}) instead of Lemma \ref{avoidSkol}, and Theorem \ref{cdmot} instead of both Theorem \ref{cd} and
Proposition~\ref{jacprop}. The only difference in the uniform proof is that we have to use residual parameterizations.
Each time we need a parameterization, for the remainder of the proof we replace all objects (sets and functions) by the corresponding parameterized objects, similarly as in the statement of the proposition. This happens when we apply Theorem \ref{cdmot}, Theorem \ref{avoidSkolmot}, Lemma \ref{avoidSkolmot}, or the induction hypothesis.
\end{proof}

\begin{lem}\label{conjumot}
Let $X$ be a definable subassignment and
let $f$ be in $\cCexp(X)$. Then there exists a function $g$ in $\cCexp(X)$ such that for each $K$ in $\CO[,M]$, each $\psi$ in $\cD_K$, and each $x\in X_K$,
$f_{K,\psi}(x)$ and $g_{K,\psi}(x)$ are conjugate complex numbers.
\end{lem}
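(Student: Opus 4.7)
The plan is to carry this out by the direct motivic analogue of the proof of Lemma~\ref{conju}: define a ``complex conjugation'' involution on $\cCexp(X)$ by negating the arguments of all additive characters, and then invoke unitarity of $\psi\in\cD_K$ to conclude that the specialization of this involution is pointwise complex conjugation.

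First I would define a map $c:\cQexp(X)\to\cQexp(X)$ on the generators by
\[
c([Y,\xi,g]) := [Y,-\xi,-g],
\]
and verify that it passes to the quotient by the relations \eqref{r1}--\eqref{r5}. Relations \eqref{r1} and \eqref{r2} are immediate since $c$ acts uniformly in the data. For \eqref{r3}, the point is that reduction modulo the maximal ideal is a ring homomorphism, so $\overline{-g}=-\bar g$, and the image of $[Y,\xi,g+g']$ under $c$ coincides with the image of $[Y,\xi+\bar g,g']$ after reapplying \eqref{r3} to the pair $(-g,-g')$. The step I expect to be the main obstacle is \eqref{r5}, because naively applying $c$ produces $[Y[0,1,0],-\xi-p,-g]$, which is not in the form prescribed by \eqref{r5}. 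To fix this I would use the definable automorphism $\sigma:Y[0,1,0]\to Y[0,1,0]$, $(y,\zeta)\mapsto(y,-\zeta)$, which is an isomorphism of objects of $\RDefe_X$ between $(Y[0,1,0],-\xi+p,-g)$ and $(Y[0,1,0],-\xi-p,-g)$; relation \eqref{r1} then identifies their classes, and the transformed class vanishes by a fresh application of \eqref{r5} with $\xi$ replaced by $-\xi$ and $g$ by $-g$.

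Next I would extend $c$ to $\cCexp(X)=\cC(X)\otimes_{\cQ(X)}\cQexp(X)$. Since the canonical image of $[Y]\in\cQ(X)$ is $[Y,0,0]$, which is fixed by $c$, the map $c$ is $\cQ(X)$-linear, so it extends (uniquely) to a $\cC(X)$-linear involution on $\cCexp(X)$ via $a\otimes b\mapsto a\otimes c(b)$. Setting $g:=c(f)$ gives the desired element of $\cCexp(X)$.

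Finally I would check specialization. Since $\psi\in\cD_K$ is a unitary character, $\psi(-t)=\overline{\psi(t)}$, so for each generator
\[
[Y,-\xi,-g]_{K,\psi}(x)=\sum_{r\in Y_x}\psi(-\xi_K(x,r)-g_K(x,r))=\overline{[Y,\xi,g]_{K,\psi}(x)}.
\]
The coefficients $a_{i,K}(x)$ from the $\cC(X)$-factor are $\QQ$-valued (equivalently real), hence self-conjugate. Combining these two facts, $g_{K,\psi}(x)=\overline{f_{K,\psi}(x)}$ for all $K\in\CO[,M]$ with $M$ sufficiently large, all $\psi\in\cD_K$, and all $x\in X_K$, which is the statement of the lemma.
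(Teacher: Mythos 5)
Your proposal is correct and follows the same essential idea as the paper's proof: negate the arguments of the additive characters and use unitarity of $\psi$. The paper's argument is considerably terser — it simply writes $f$ as a finite sum of terms $g_i\otimes[Y_i,\xi_i,h_i]$ and declares $g$ to be the corresponding sum with $[Y_i,-\xi_i,-h_i]$, noting that the $g_{iK}$ are real-valued; it does not bother to construct a well-defined involution of $\cQexp(X)$, since the lemma only asserts existence of \emph{some} $g$, so no independence of the chosen representation is needed. Your extra work — verifying descent through relations \eqref{r1}--\eqref{r5}, in particular the sign-flip automorphism $(y,\zeta)\mapsto(y,-\zeta)$ to handle \eqref{r5} — is correct and yields the stronger statement that ``complex conjugation'' is a genuine ring involution of $\cCexp(X)$ commuting with specialization, but it is more than the lemma requires.
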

\begin{proof}
For each $K$ in $\CO[,M]$ for large enough $M$ the function $g$ can be obtained from $f$ by putting a minus sign in each of the arguments of the additive characters which occur in $f$. More precisely, write $f$ as a finite sum of terms of the form $g_i\otimes [Y_i,\xi_i,h_i]$ for $g_i$ in $\cC(X)$, $Y_i$ a definable subassignment of $X[0,n_i,0]$ for some $n_i$ and definable functions $\xi_i,h_i$ on $Y_i$, and define $g$ as the corresponding sum with terms $g_i\otimes [Y_i,-\xi_i,-h_i]$. Since the $g_{iK}$ take real values, $g$ is as desired.
\end{proof}
The analogue of Corollary \ref{interspexp} also holds.
\begin{cor}\label{interspexmot}
Let $X$ be a definable subassignment and let $h_i$ be in $\cCexp(X)$ for $i=1,\ldots,N$.
Then there exist $f$ and $g$ in $\cCexp(X)$ and $M>0$ such that for each $K$ in $\CO[,M]$ and each $\psi$ in $\cD_K$,
$$
Z(f_{K,\psi}) = \bigcap_{i=1}^{N} Z(h_{i,K,\psi})\ \mbox{ and }\ Z(g_{K,\psi}) = \bigcup_{i=1}^{N} Z(h_{i,K,\psi}).
$$
The corresponding statement for $h_i$ in $\cC(X)$ also holds, yielding $f,g\in\cC(X)$.
\end{cor}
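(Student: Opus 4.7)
The proof follows the same pattern as that of Corollary \ref{interspexp}, with the only extra input being Lemma \ref{conjumot} to produce complex conjugates uniformly in $K$ and $\psi$, and the observation that $\cCexp(X)$ is a ring.

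For the union, take $g := \prod_{i=1}^N h_i$. Since $\cCexp(X)$ is closed under products, $g$ lies in $\cCexp(X)$, and there exists $M_g > 0$ such that $g_{K,\psi} = \prod_i h_{i,K,\psi}$ for all $K$ in $\CO[,M_g]$ and $\psi \in \cD_K$ (specializations commute with the ring operations in $\cCexp$, for sufficiently large residue characteristic). A product of complex numbers vanishes iff at least one factor does, giving $Z(g_{K,\psi}) = \bigcup_i Z(h_{i,K,\psi})$.

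For the intersection, apply Lemma \ref{conjumot} to each $h_i$ to obtain $\overline{h_i} \in \cCexp(X)$ such that, for all $K$ in $\CO[,M_0]$ with $M_0$ large enough and all $\psi\in\cD_K$, the functions $h_{i,K,\psi}$ and $\overline{h_i}_{K,\psi}$ take complex conjugate values on $X_K$. Set
$$
f := \sum_{i=1}^N h_i \cdot \overline{h_i}\,,
$$
which lies in $\cCexp(X)$. For $K$ in $\CO[,M]$ with $M \geq M_0$ sufficiently large, one has $f_{K,\psi}(x) = \sum_i |h_{i,K,\psi}(x)|_{\CC}^2$, a sum of non-negative real numbers, so $f_{K,\psi}(x) = 0$ iff $h_{i,K,\psi}(x) = 0$ for every $i$, yielding $Z(f_{K,\psi}) = \bigcap_i Z(h_{i,K,\psi})$.

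For the corresponding statement in $\cC(X)$, no conjugates are needed since the functions in $\cC(X)$ specialize to $\QQ$-valued functions: one simply takes $f := \sum_i h_i^2$ and $g := \prod_i h_i$. The main (minor) subtlety is the uniformity in $K$, which is handled by taking $M$ large enough so that Lemma~\ref{conjumot} applies and the specialization maps $\cCexp(X) \to \{\text{functions } X_K \to \CC\}$ respect the ring operations for all $K \in \CO[,M]$ and $\psi \in \cD_K$; there is no further obstacle.
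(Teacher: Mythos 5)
Your proof is correct and takes essentially the same approach as the paper: use Lemma~\ref{conjumot} to build $f = \sum_i h_i\overline{h_i}$ and take $g = \prod_i h_i$, with the sum-of-squares variant for $\cC(X)$. (Incidentally, the paper's own one-line proof cites Lemma~\ref{conju} rather than Lemma~\ref{conjumot}; your reference is the correct one in the motivic setting.)
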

\begin{proof}
For $f$ one can take $\sum_{i=1}^N h_i \overline{h_i}$, where $\overline{h_i}$ is the complex conjugate of $h_i$ as given by Lemma \ref{conju}. For $g$ one simply takes the product of the $h_i$.
\end{proof}

The final ingredients are the following basic forms of the Transfer Principle and the Specialization Principle of \cite{CLexp}.
The transfer principle enables one to change the characteristic of $K$ for equalities between the specializations of exponential motivic functions.
The specialization principle of \cite{CLexp} states that taking motivic integrals commutes with taking specializations to local fields and taking the classical integral over the local field.

\begin{prop}[\cite{CLexp}, Proposition 9.2.1]\label{strong} Let $\varphi$ be in
$\cCexp (X)$ for some definable subassignment $X$. Then, there exists an integer $M$
such that for all $K_1,K_2$ in $\CO[,M]$ with isomorphic residue fields $k_{K_1}$,
$k_{K_2}$, the following holds:
\begin{gather*}
\varphi_{K_1,\psi_{K_1}} = 0 \mbox{ for all }
\psi_{K_1}\in\cD_{K_1}
\\
\text{if and only if}\\
 \varphi_{K_2,\psi_{K_2}} = 0\mbox{ for all  }
\psi_{K_2}\in\cD_{K_2}.
\end{gather*}
\end{prop}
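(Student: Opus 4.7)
The plan is to rewrite the hypothesis ``$\varphi_{K,\psi} = 0$ for all $\psi \in \cD_K$'' as a condition purely in the language $\LPas$ (without any exponential ingredient), and then invoke the classical Ax-Kochen principle for $\LPas$ already used throughout this section. As a first step, I would bring $\varphi$ into a normal form. Writing $\varphi = \sum_i c_i \otimes [Y_i, \xi_i, g_i]$ with $c_i \in \cC(X)$ and $Y_i \subset X[0, n_i, 0]$, the relation (\ref{r3}) lets me absorb the part of $g_i$ having $\ord(g_i) \geq 0$ into $\xi_i$; after partitioning each $Y_i$ accordingly (possible by quantifier elimination for valued field variables), I may assume that on each piece $g_i$ either vanishes identically or takes values of strictly negative valuation. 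In the former case, $\psi(\xi_i + g_i) = \psi(\xi_i)$ depends only on the common restriction of the $\psi \in \cD_K$ to $\cO_K$, and not on the choice of $\psi$ itself.

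The heart of the argument is a Fourier-type expansion in the characters $\cD_K$. Any two $\psi, \psi' \in \cD_K$ differ by a character of the discrete abelian group $K/\cO_K$. For fixed $x \in X_K$, expanding $\varphi_{K,\psi}(x)$ in the character basis of $K/\cO_K$ gives a finite sum whose vanishing for every $\psi \in \cD_K$ is equivalent, by linear independence of distinct characters, to the simultaneous vanishing of all ``Fourier coefficients''. Each such coefficient is a partial sum of the $c_{iK}(x)$ obtained by grouping summands of $\varphi$ according to the definable fibres of the pair $(\xi_i,\, g_i \bmod \cO_K)$. I would then package these coefficients into finitely many constructible motivic functions $\varphi'_1, \ldots, \varphi'_N \in \cC(X)$ whose simultaneous identical vanishing on $X_K$ is equivalent to the original hypothesis; crucially, no exponential symbol survives in any $\varphi'_j$.

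Finally, each individual statement ``$\varphi'_{j,K} \equiv 0$ on $X_K$'' reduces, after writing $\varphi'_j$ as a finite $\AA$-linear combination of products of definable $\ZZ$-valued morphisms and functions $\LL^\beta$ and examining the resulting polynomial in $q_K$, to a finite conjunction of first-order $\LPas$-statements about definable sets. The Ax-Kochen principle for $\LPas$ then furnishes an $M$ such that the truth of this whole family coincides for any $K_1, K_2 \in \CO[,M]$ with $k_{K_1} \cong k_{K_2}$, which gives the conclusion. The main obstacle will be carrying out the Fourier reduction uniformly in $K$ while keeping the Fourier coefficients inside $\cC(X)$: since the values of the $g_i$ may have arbitrarily negative valuation, the partial sums must be regrouped along carefully chosen definable parameters (indexed by the value group and the residue field) before they can be identified as motivic constructible functions.
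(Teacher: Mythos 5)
The paper does not prove this proposition; it is quoted from \cite{CLexp} (Proposition 9.2.1) and used as a black box, so there is no internal argument to compare against. I therefore assess your sketch on its own terms.

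The Fourier strategy is a natural idea, but the decisive claim in the middle paragraph is incorrect. Carrying out the reduction carefully: group the summands of $\varphi_{K,\psi}(x)$ by the pair $(\xi,a)$, where $\xi$ is the value of $\xi_i(x,r)$ in $k_K$ and $a$ is the coset of $g_{iK}(x,r)$ in $K/\cO_K$. The resulting coefficient $C(x,\xi,a)$, a weighted count of points in fibres of a definable map, does lie in $\cC$ as you suggest, but the identity one obtains is $\varphi_{K,\psi}(x)=\sum_{a}\bigl(\sum_{\xi}C(x,\xi,a)\,\chi_0(\xi)\bigr)\psi(a)$, where $\chi_0$ is the fixed additive character on $k_K$ that every $\psi\in\cD_K$ restricts to on $\cO_K$. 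Linear independence over the characters $\psi\mapsto\psi(a)$ of the discrete dual therefore yields, for each coset $a$, the condition $\sum_{\xi\in k_K}C(x,\xi,a)\,\chi_0(\xi)=0$. That is a residue-field exponential sum, not the vanishing of a $\cC$-function: one cannot split further over $\xi$ because the complex numbers $\chi_0(\xi)$, $\xi\in k_K$, are far from linearly independent. So the genuine Fourier coefficients of $\varphi_{K,\psi}$ land in $\cCexp$ (with trivial valued-field argument but a surviving residue-field character), not in $\cC$; the claim that ``no exponential symbol survives in any $\varphi'_j$'' is false, and your final paragraph, which writes $\varphi'_j$ in $\cP$-normal form and reduces its identical vanishing to a first-order $\LPas$-condition, does not get off the ground. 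Of the two difficulties in your sketch, the indexing obstacle you flag at the end is actually the more tractable one (introduce a valued-field parameter $a_0$ on $X[1,0,0]$ and observe that the condition depends only on $a_0\bmod\cO_K$); the surviving residue-field exponential sums, and the problem of transferring conditions on them across local fields with isomorphic residue fields, are the real content of the proof in \cite{CLexp}, and they are exactly what your sketch leaves untouched.
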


\begin{prop}[\cite{CLexp}, Theorem 9.1.4]\label{SpecialP}
Let $\varphi$ be in $\cCexp (X\subset \Lambda[m,0,0])$ for some definable subassignments $X$, $\Lambda$ and some $m\geq 0$.  Suppose that $\varphi$ is motivically $\Lambda$-integrable, namely, with notation from \cite{CLexp}, the class ${\overline \varphi}$ of $\varphi$ in  $C^m(X\to \Lambda)^{\rm exp}$ lies in ${\rm I}C(X\to \Lambda)^{\rm exp}$.
Let $\theta\in \cCexp (\Lambda)$ be the motivic integral of ${\overline \varphi}$ relative to the projection $X\to\Lambda$, namely, $\theta = \mu_\Lambda({\overline \varphi})$ with notation from (8.7.10) of \cite{CLexp}.
Then, there exists an integer $M$
such that for all $K$ in $\CO[,M]$ and all $\psi$ in $\cD_K$ the following holds
for each $\lambda\in\Lambda_K$. The function $w\mapsto \varphi_{K,\psi}(\lambda,w)$ is integrable over $X_{K,\lambda}$ against the Haar measure on $K^m$, and,
\begin{gather*}
\int_{w\in X_{K,\lambda}}  \varphi_{K,\psi}(\lambda, w) |dw|  = \theta_{K,\psi}(\lambda),
\end{gather*}
with $|dw|$ the normalized Haar measure on $K^m$.
\end{prop}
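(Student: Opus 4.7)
The plan is to prove the specialization principle by reducing the motivic integral to an iterated procedure (cell decomposition, then successive one-variable integrations, then residue-field summations and value-group summations) and verifying that each atomic step commutes with specialization to any $K \in \CO[,M]$ with $M$ large enough. Since the statement is ultimately Theorem 9.1.4 of \cite{CLexp}, I would follow the structural approach used there, now streamlined using the uniform tools developed earlier in the present paper.

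First, I would reduce to the one-variable case ($m=1$) by induction, which is permitted because the motivic integral is defined inductively along the projections $X[m,0,0] \to X[m-1,0,0] \to \cdots \to X$. For $m=1$, apply the uniform cell decomposition Theorem \ref{cdmot} and a residual parameterization to partition $X$ (over $\Lambda$) into finitely many cells. On each $1$-cell over $\Lambda$ with center $c$ and coset $a + n\ZZ$, the fiber of the skeleton map $y \mapsto \ord(y - c(\lambda))$ is, for every $K \in \CO[,M]$, a ball of $K$-volume $q_K^{-\gamma}$ for a definable integer function $\gamma$, and the Haar integral over the cell factors through the corresponding value-group sum weighted by $q_K^{-\gamma}$; on $0$-cells the integral is trivial. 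Thus the local integral is a uniform Presburger sum over $\ZZ$, to which Theorem \ref{p1qu} applies. By construction, the motivic integral $\theta$ is defined as precisely the same value-group sum of the same $\cC$-class; so the two sides agree after specialization. The residue-field multiplicities introduced by the parameterization are handled by the fact that specialization of a class in $\cQ(\cdot)$ to $K$ is literally the counting of $k_K$-points.

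Second, for the exponential part, apply Proposition \ref{intCLpexpmot} on each cell to write $\varphi$ as a finite sum of terms of the form $G(\varphi(\lambda,y))\cdot\psi_K(h(\lambda,y))$, where $G$ depends only on $\lambda$ and on the skeleton variables. The two key relations in $\cQexp$ now do exactly the work required: relation (\ref{r3}) says that the motivic character identifies $\psi_K(g)$ with $\psi_K(\bar g)$ whenever $g$ is integral, which matches the local fact that $\psi$ is trivial on $\cM_K$; and relation (\ref{r5}) is precisely the orthogonality $\sum_{t \in k_K}\psi(\alpha + t) = 0$ for nontrivial characters. Hence on each cell, the motivic sum defining $\theta$ and the genuine Haar integral of $\varphi_{K,\psi}$ are term-for-term equal. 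Motivic $\Lambda$-integrability of $\varphi$ is then exactly what is needed to invoke Theorem \ref{p1qu} at each step, ensuring absolute convergence of the value-group sums for $K \in \CO[,M]$ and thus legitimizing the Fubini-style rearrangements in passing between nested integrals and a single integration over $K^m$.

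The hard part, and the reason this is a nontrivial theorem rather than a formal exercise, is the simultaneous bookkeeping: one must ensure that a \emph{single} residual parameterization and a \emph{single} cell decomposition witness both the motivic definition of $\theta$ and the actual local integration, for \emph{all} $K$ in $\CO[,M]$ at once. The uniform cell decomposition Theorem \ref{cdmot}, the uniform rectilinearization Proposition \ref{paramrectimot}, and the oscillation-control Proposition \ref{intCLpexpmot} together provide exactly this uniformity; with them in hand, the proof reduces to comparing the same combinatorial data on the two sides, which is the formal content of \cite[Theorem 9.1.4]{CLexp}.
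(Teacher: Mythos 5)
The paper does not prove this proposition; it is cited verbatim as Theorem~9.1.4 of \cite{CLexp} and used as a black-box ingredient elsewhere in the paper (e.g., in the second step of the proof of Theorem~\ref{p2mot}, and in the proofs of Theorems~\ref{p1qmot} and~\ref{p1qmotexp}). So there is no internal proof to compare against, and your write-up has to be judged as an independent attempt to reprove the cited result.

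As such, the outline correctly names the ingredients (uniform cell decomposition, reduction to value-group sums, the role of the relations (\ref{r3}) and (\ref{r5}) in $\cQexp$, and motivic $\Lambda$-integrability for convergence), but it has a genuine gap at exactly the crux. You write that ``by construction, the motivic integral $\theta$ is defined as precisely the same value-group sum of the same $\cC$-class; so the two sides agree after specialization.'' That agreement is precisely what must be proved, not a given: the motivic measure $\mu_\Lambda$ of (8.7.10) of \cite{CLexp} is a formal push-forward on $\cCexp$, built inductively through a chain of non-canonical auxiliary choices (a cell decomposition, an ordering of coordinates, parameterizations), and the content of Theorem~9.1.4 is that, after specializing those choices to a field $K \in \CO[,M]$ and a character $\psi \in \cD_K$, the result coincides with the honest Haar integral of $\varphi_{K,\psi}$ and, crucially, is independent of the choices made. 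Your sketch assumes this agreement, performs a plausible but formal bookkeeping, and then concludes by deferring to ``the formal content of \cite[Theorem~9.1.4]{CLexp}'' --- which is circular. A complete argument would need to verify, for each elementary push-forward step in the construction of $\mu_\Lambda$ (along a $\ZZ$-coordinate, a residue-field coordinate, or a valued-field coordinate via a cell), that specialization commutes, and in addition that the specialized value for $K \in \CO[,M]$ does not depend on the particular cell decomposition or parameterization chosen; neither of these verifications is present. A smaller point worth flagging: invoking Proposition~\ref{intCLpexpmot} (a result of the present, later paper) inside a proof of \cite[Theorem~9.1.4]{CLexp} requires checking that Proposition~\ref{intCLpexpmot} does not itself rely on the Specialization Principle; this happens to be true here (its proof uses only Theorem~2.2.2 of \cite{CLexp}, Denef--Pas cell decomposition, and the relations of $\cQexp$), but the point should be stated rather than left implicit.
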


With all this at hand, we are ready to complete the proofs of our main results.

\begin{proof}[Proof of Theorem \ref{p2mot}]
Let $X$ and $f\in\cC(X[m,0,0])$ be given as in the theorem.
The proof goes in two steps. First we show that there is a suitable residual parameterization $\sigma$ of $X[m,0,0]$ such that the theorem holds for the function $f_{par}$ in $\cC(X[m,0,0]_{par})$, where,
as in Definition \ref{def:par}, $X[m,0,0]_{par}$ and $f_{par}$ are obtained from $X[m,0,0]$ and $f$ using $\sigma$. In the second step, we show that the result for $X[m,0,0]_{par}$ and $f_{par}$ implies the result for $X[m,0,0]$ and $f$ itself.

Let us now treat the first step. We follow the steps of the proof of Theorem \ref{p2p}. By an inductive application of the Cell Decomposition Theorem \ref{cdmot}, we find a residual parameterization
$$
\sigma:X[m,0,0]\to X[m,0,0]_{par} \subset X[m,s,0]
$$
of $X[m,0,0]$ for some $s$, a finite partition of $X[m,0,0]_{par}$ into definable subassignments $A_i$, and an $M>0$, such that for each $K\in \CO[,M]$ the following holds, where we write $X'$
for $X[0,s,0]$.
The nonempty sets among the $A_{i,K}$ are full cells over $X'_K$ which together form a finite partition of $(X[m,0,0]_{par})_K$, and, for nonempty $A_{i,K}$, the restriction $(f_{par,K})|_{A_{i,K}}$ factors through the skeleton map $s_{A_{i,K}/X'_K}$ of $A_{i,K}$ over $X'_K$. As in the proof of Theorem \ref{p2p}, we identify each skeleton with a definable set, for example by replacing $\{+\infty\}$ by a disjoint copy of the singleton $\{0\}$.
Let us write $f_{i,K}$ for the map from the skeleton $S_{/X'_K}(A_{i,K})$ of $A_{i,K}$ over $X'_K$ to $\AA_{q_K}$ induced by $f_{par,K|A_{i,K}}$. Then the function $f_{i,K}$ lies in $\cP_{q_K}(S_{/X'_K}(A_{i,K}))$ for each $i$.
The function $M_{0,K}$ sending $z\in S_{/X'_K}(A_{i,K})$ to the volume of the fiber $(s_{A_{i,K}/X'_K})^{-1}(z)$, taken inside $K^m$, lies in $\cC(S_{/X'_K}(A_{i,K}))$ and clearly is the specialization of a motivic constructible function which is the product of a characteristic function of a definable subassignment and a motivic constructible function of the form $\LL^\alpha$ for some definable morphism $\alpha$ (see Remark \ref{skel}).
Hence, also $z\mapsto f_{i,K}(z)\cdot M_{0,K}(z)$ lies in $\cC( S_{/X'_K}(A_{i,K}) )$ and is the specialization of a motivic constructible function $\tilde f_i$.  For each $i$, let $X'_i$ be the image of the subassignment $A_{i}$ under the projection to the subassignment $X'$.
By Corollary \ref{p2motZ}, we now obtain for each $i$ that there are $h_{i1}$, $h_{i2}$ and $h_{i3}$ in
$\cC(X'_i)$ such that
$$
\Int(\tilde f_{i,K},X'_{i,K}) = Z(h_{i1,K}), 
$$
$$
\Bdd(\tilde f_{i,K},X'_{i,K})= Z(h_{i2,K}), 
$$
and
$$
\Iva(\tilde f_{i,K},X'_{i,K}) = Z(h_{i3,K}). 
$$
Extend each of the $h_{ij}$ by zero to a function ${\tilde h}_{ij}$ in $\cC(X')$. Now for $j=1,2,$ or $3$, let $h_j$ be the function
$$
\sum_{i}{\tilde h}_{ij}^2.
$$
Then the $h_j\in \cC(X')$ for $j=1,2,3$ are as required by the theorem for $f_{par}$ and $X[m,0,0]_{par}$ instead of for $f$ and $X[m,0,0]$. This finishes the first step.

In the second and final step we show how to get rid of the residue field parameters that were introduced with $\sigma$. Let us denote by $h_{j,par}$ the functions obtained from step one, which are as desired by the theorem but for $f_{par}$ and $X[m,0,0]_{par}$ instead of for $f$ and $X[m,0,0]$, with residual parameterization $\sigma$. 
Let us write $v:X'=X[0,s,0]\to X$ for the natural projection. Observe that $v$ is a coordinate projection which only omits some residue field variables.
Now we define $h_j$ as the motivic integral of $h_{j,par}^2$ relative to $v:X' \to X$, namely, $\mu_{X}(h_{j,par}^2)$ with notation from Section 14.2 of \cite{CLoes} (which coincides with notation from (8.7.10) of \cite{CLexp}, and which in this case coincides with $v_!(h_{j,par}^2)$ of Section~5.6 of \cite{CLoes}). By Proposition \ref{SpecialP}, $h_{j,K}(x)$ for $x\in X_K$ equals the sum of the terms $h_{j,par,K}^2(x,\xi)$,
where $\xi$ runs over the fiber $v^{-1}_K(x) \subset X'_K$, which is a finite set of size at most $q_K^s$. Since a sum of squares of real numbers is nonzero if and only if each of the real numbers is nonzero, the $h_i$ are as desired. This completes the proof of Theorem \ref{p2mot}.
\end{proof}

\begin{proof}[Proof of Theorem \ref{p2'mot}]
Let $X$ and $f\in\cC(C[m,0,0])$ be given as in the theorem.
The proof goes again in two steps, similar to the proof of Theorem \ref{p2mot}. First we show that there is a suitable residual parameterization $\sigma$ of $X[m,0,0]$ such that the theorem holds for the function $f_{par}$ in $\cC(X[m,0,0]_{par})$, where $X[m,0,0]_{par}$ and $f_{par}$. In the second step, we again show that the result for $X[m,0,0]_{par}$ and $f_{par}$ implies the result for $X[m,0,0]$ and $f$ itself.
For the first step, one repeats the first step of the proof of Theorem \ref{p2mot} up to the introduction of the $X'_i$; we use the same notation as in the proof of Theorem \ref{p2mot}. One then proceeds as follows. By Corollary \ref{p2'motZ}, one finds for each $i$ a function $g_i$
in $\cC(S_{/X}(A_i))$ such that $g_i$ is motivically $X_i'$-integrable and,  for all $K$ in $\CO[,M]$ for a large $M$, $\Int(g_{i,K},X_{i,K}')=X_{i}'$ and $\tilde f_{i,K}(x,s)=g_{i,K}(x,s)$ whenever $x$ lies in $\Int(\tilde f_{i,K},X'_{i,K})$.  Now for $g$ one takes the function in $\cC(X[m,0,0]_{par})$ which equals $f_{par}$ on the definable subassignment where $M_0=0$ and which equals  $\frac{g_{i}}{M_0}$ elsewhere. Then this $g$ is as desired by Theorem \ref{p2'mot} for $f_{par}$ and $ X[m,0,0]_{par}$
instead of for $f$ and $X[m,0,0]$. (See Remark \ref{skel} to see that dividing by $M_0$ in this way is harmless.) This finishes the first step.

For the second step, let us denote by $g_{par}$ the function obtained from step one, which is as desired by the theorem but for $f_{par}$ and $X[m,0,0]_{par}$ instead of for $f$ and $X[m,0,0]$, and with residual parameterization $\sigma$. Since $\sigma$ is a definable isomorphism which is moreover a coordinate projection which omits only residue field variables, we can define $g$ as $\sigma^*(g_{par})$, being nothing else than the pull-back of $g_{par}$ in the notation of \cite{CLoes}. For $K$ in $\CO[,M]$ for large $M$, $g_K$ equals the composition $g_{par,K}\circ \sigma_K$, by the definition of pull-back. Hence, the integrability conditions for each $K$ in $\CO[,M]$ for large $M$ are preserved. By construction of the motivic integrability conditions, also motivic integrability is preserved when passing from $g_{par}$ to $g$, in the form desired by the theorem. Recall that $f_{par,K}$ equals $f_K\circ \sigma_K^{-1}$, by the definition of $f_{par}$. Hence, the equality between the $g_{par,K}$ and $f_{par,K}$
on the integrable locus of $f_{par,K}$ yields the desired equality between the $g_K$ and the $f_K$ on the integrable locus of $f_K$. Hence, $g$ is as desired by the theorem for $f$.
\end{proof}

Similarly to the $p$-adic case, the proofs of Theorems \ref{p2mot} and \ref{p2'mot} yield the following slightly more general variant.

\begin{cor}\label{corHXmot}
Let $f$ be in $\cCexp(X)\otimes_{\cC(X)} \cC(X[m,0,0])$ for some definable subassignment $X$ and some $m\geq 0$.
Then there exist  $M\geq 0$, $h_1,h_2$, $h_3$ in $\cCexp(X)$ and $g$ in $\cCexp(X)\otimes_{\cC(X)} \cC(X[m,0,0])$ such that for each $K$ in $\CO[,M]$ and each $\psi$ in $\cD_K$, the zero loci of the $h_{i,K,\psi}$ equal respectively
\begin{equation*}
\Int(f_{K,\psi},X_K),\qquad
\Bdd(f_{K,\psi},X_K),\qquad\text{and}\qquad
\Iva(f_{K,\psi},X_K),
\end{equation*}
and such that $\Int(g_{K,\psi},X_K)=X_K$ and
$f_{K,\psi}(x,y)=g_{K,\psi}(x,y)$ whenever $x$ lies in $\Int(f_{K,\psi},X_K)$. Moreover, any such $g$ can be written as a finite sum of terms of the form
$$
h_i\cdot f_i
$$
with $h_i\in \cCexp(X)$ and $f_i\in \cC(X[m,0,0])$ satisfying $\Int(f_{i,K,\psi},X_K)=X_K$, and such that the $f_i$ are motivically $X$-integrable.
\end{cor}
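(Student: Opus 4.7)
The plan is to adapt the proofs of Theorems \ref{p2mot} and \ref{p2'mot} to the tensor-product setting, in the same spirit as the reduction from the $p$-adic Theorems \ref{p2p} and \ref{p2'p} to Corollary \ref{corHX}. Writing $f = \sum_j a_j \otimes b_j$ with $a_j \in \cCexp(X)$ and $b_j \in \cC(X[m,0,0])$, the crucial observation is that the $a_j$ depend only on the $X$-variable, so they are inert under all the $y$-side operations (cell decomposition, skeleton map, volume factor) and play the role of $\cCexp(X)$-valued coefficients throughout the subsequent Presburger analysis.

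First I apply the uniform Cell Decomposition Theorem \ref{cdmot} to all the definable data entering the $b_j$, obtaining a residual parameterization $\sigma:X[m,0,0]\to X[m,0,0]_{par}$ and a finite partition into cells $A_i$ over $X':=X[0,s,0]$ such that on each $A_i$ the pulled-back function $f_{par}$, multiplied by the fiber-volume factor of the skeleton map over $X'$, factors through the skeleton $S_i$ as a Presburger form
\[
\tilde f_i(x,\xi,z) = \sum_k c_{k,i}(x,\xi)\,\LL^{b_{k,i}\cdot z}\,z^{a_{k,i}}
\]
with pairwise distinct tuples $(a_{k,i},b_{k,i})$; here the coefficients $c_{k,i}$ lie in $\cCexp(X')$ (they absorb the $a_j$-factors combined with the $\cC$-data produced by the cell decomposition) while the monomials remain in the $\cC$-part.

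The complex-coefficient extension of Lemma \ref{lemqcon0} (immediate by splitting real and imaginary parts) then characterizes, on each cell, the integrability, boundedness and identical-vanishing loci as finite intersections $\bigcap_k Z(c_{k,i})$ over prescribed subsets of indices. By Corollary \ref{interspexmot} each such intersection is the zero locus of a single element of $\cCexp(X')$; intersecting across cells and eliminating the residue-field parameters via $\mu_X$ of squared moduli, as in the proof of Theorem \ref{p2mot} (invoking Proposition \ref{SpecialP}), yields the required $h_1, h_2, h_3 \in \cCexp(X)$. For the interpolant, on each cell I discard those terms whose exponent $b_{k,i}$ violates the summability criterion of Lemma \ref{lemqcon0}; whenever $x$ lies in the integrability locus of $f$, the dropped coefficients $c_{k,i}(x)$ must vanish by that same lemma, so the surviving sum $g_{par}$ agrees with $f_{par}$ there, has full integrability locus, and is manifestly motivically $X$-integrable. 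Pulling back along $\sigma$ as at the end of the proof of Theorem \ref{p2'mot} produces the desired $g$.

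The main obstacle is confirming that $g$ really lies in the tensor product $\cCexp(X)\otimes_{\cC(X)}\cC(X[m,0,0])$ rather than in the a priori larger ring $\cCexp(X[m,0,0])$, and writing it as an explicit finite sum $\sum_i h_i f_i$ with $h_i \in \cCexp(X)$ and $f_i \in \cC(X[m,0,0])$ both motivically $X$-integrable and of full integrability locus. This reduces to careful bookkeeping: the residual parameterization $\sigma$ and the entire cell decomposition are constructed from $\cC$-data (no additive characters enter), so their contribution after pullback stays in $\cC(X[m,0,0])$, while all $\cCexp$-content is confined to the coefficients $c_{k,i}$, which live over $X'$ and descend to $\cCexp(X)$ after the residue-field summation. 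Combined, $g$ emerges as a finite sum of products of $\cCexp(X)$-coefficients with definable cell-indicators times Presburger monomials having strictly negative exponents on the $\NN^\ell$-coordinates; these second factors lie in $\cC(X[m,0,0])$, are integrable over $K^m$ for every $x$, and are motivically $X$-integrable.
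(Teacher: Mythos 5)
Your overall strategy --- adapting the proofs of Theorems \ref{p2mot} and \ref{p2'mot} with the $\cCexp(X)$-coefficients carried along inertly, exchanging Lemma \ref{inters} for its conjugate-based analogue Corollary \ref{interspexmot}, descending from $X'$ to $X$ via $\mu_X(h\bar h)$, and verifying the tensor-product structure of $g$ through the isomorphism $\cCexp(V)\cong\cCexp(X)\otimes_{\cC(X)}\cC(V)$ --- is exactly what the paper intends, since the paper dispatches this corollary in one sentence by pointing back at those proofs. However, one intermediate claim as stated is incorrect and hides the genuinely nontrivial part of the Presburger analysis.

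You assert that after rectilinearization the $\Int$, $\Bdd$, and $\Iva$ loci on each cell are finite intersections $\bigcap_k Z(c_{k,i})$, justified by the complex-coefficient extension of Lemma \ref{lemqcon0}. This is only true when the rectilinearized fiber is $\NN^\ell$. Parametric Rectilinearization (Theorem \ref{paramrecti} / Proposition \ref{paramrectimot}) produces fibers of the form $\Lambda_s \times \NN^\ell$ where $\Lambda_s$ is a \emph{finite} Presburger set of unbounded, parameter-dependent size, and over such a finite factor a nonzero Presburger expression may vanish identically without all its coefficients vanishing, so $\Iva$ is strictly larger than $\bigcap_k Z(c_k)$. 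This is precisely why the proof of Theorem \ref{p2} first treats $\Iva$ separately: it invokes the zero-counting Lemma \ref{lemqexp0}, splits the base into the pieces $S_{A,1}$ (where $|B_x|\leq M$) and $S_{A,2}$, introduces the Presburger sections $H_1,\dots,H_M$, and builds the auxiliary sets $Q_A$ (vanishing at the points $H_j(x)$) and $R_A$ (all coefficients vanish); only afterward are $\Int$ and $\Bdd$ reduced to $\Iva$ of the extended-by-zero ``bad part'' $\widetilde h$. To make your argument go through you must (i) reproduce that two-step structure with $\cCexp(X')$-valued coefficients, using Corollary \ref{interspexmot} wherever Lemma \ref{inters} was used, and (ii) supply a complex-coefficient version of Lemma \ref{lemqexp0} as well, not only of Lemma \ref{lemqcon0} --- this extension does hold (separate real and imaginary parts, noting that if one part vanishes identically the other determines the zero set, and that both being identically zero forces all $c_i=0$), but it is an ingredient you need and did not mention. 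Once the $\Iva$ step is repaired along these lines, the remainder of your write-up --- the residual parameterization bookkeeping, the $\mu_X$-descent, the construction of $g$ by discarding non-summable monomials, and the verification that $g$ lands in $\cCexp(X)\otimes_{\cC(X)}\cC(X[m,0,0])$ and decomposes as $\sum h_i f_i$ with integrable $f_i$ --- is sound and faithful to the proof the paper implicitly has in mind.
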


\begin{proof}[Proof of Theorems \ref{p2pexpmot} and \ref{p2'pexpmot}]
The proof consists of the two usual steps, as in the proofs of Theorems \ref{p2mot} and \ref{p2'mot}. In the first step, we prove the result up to a residual parameterization $\sigma$. For that step, let us change the notation: denote the given function
$f\in \cCexp(X[m,0,0])$ by $f_0$,
and let us write $X^0$ for $X$ and $U^0$ for $X^0[m,0,0]$. By Proposition \ref{intCLpexpmot}, there exists a residual parameterization $\sigma:U^0\to U^0_{par}\subset X^0[0,s,0]$ such that, if we write $X$ for $X^0[0,s,0]$, $U$ for $U^0_{par}$ and $f$ for $f^0\circ \sigma^{-1}$, then the following holds. There exist an integer $d\geq 0$, definable morphisms $h_{  i}:U\to h[1,0,0]$, a definable surjection $\varphi:U\to V\subset X[0,0,t]$ for some $t\geq 0$, and functions $G_{  i}$ in $\cCexp(V)$ such that for each $K$ in $\CO$ with large enough residue field characteristic and for each $\psi$ in $\cD_K$, conditions 1) and 2) of Proposition \ref{intCLpexp} hold for $d$, $f_{  K,\psi}$, $U_K$, $V_K$, $\varphi_K$, $h_{  i K}$, and $G_{  i K,\psi }$.
For each $i$ let $H_i$ be the function $G_i\circ\varphi$. By (3.3.1) and Proposition 3.3.2 of \cite{CLexp}, one has a natural isomorphism
$$
\cCexp(V) \cong \cCexp(X)\otimes_{\cC(X)} \cC(V)
$$
of $\cC(X)$-algebras,
and thus, the $H_i$ may be considered to lie in $\cCexp(X)\otimes_{\cC(X)} \cC(X[m,0,0])$. For these tensor products of $\cC(X)$-algebras we use the natural inclusions of $\cC(X)$-algebras $\cC(X)\subset \cC(V)$, $\cC(X)\subset \cCexp(X)$, and $\cC(X)\subset \cC(X[m,0,0])$.
From having 2) of Proposition \ref{intCLpexp} for the mentioned data, it follows that for $K$ in $\CO[,M]$, any $\psi$ in $\cD_K$ and for any $x \in X_K$, if $x \in \Iva(H_{i,K,\psi}, X_K)$ for all $i$, then $x \in  \Iva(f_K, X_K)$.
Vice versa, if $f_K(x, \cdot)$ is identically zero, then $H_{i,K,\psi}(x,\cdot)$ is zero on each set $W_{K,x,r}$, and since the function $H_{i,K,\psi}(x,\cdot)$ is constant on each set $U_{K,x,r}$, it is identically zero. Thus we just showed:
$$
\Iva(f_{K,\psi},X_K) = \bigcap_i \Iva(H_{i,K,\psi},X_K).
$$
A similar argument shows
$$
\Bdd(f_{K,\psi},X_K) = \bigcap_i \Bdd(H_{i,K,\psi},X_K),
$$
$$
\Int(f_{K,\psi},X_K) = \bigcap_i \Int(H_{i,K,\psi},X_K),
$$
where in the case of $\Int(f_{K,\psi},X_K)$, we use the inequality between the volumes of $W_{K,x,r}$ and $U_{K,x,r}$
from 2) of Proposition \ref{intCLpexp}.
 Now apply Corollary \ref{corHXmot} to each of the functions $H_i$ to find $h_{ij}$ in $\cCexp(X)$ for $j=1,2,3$ and $g_i$ in $\cCexp(X\times K^m)$.
By Corollary \ref{interspexmot} for the finite intersections displayed above, Theorem \ref{p2pexpmot} holds for $f_{par}$.
For $g$ as required by theorem \ref{p2'pexpmot} for $f_{par}$ one takes the obvious function in $\cCexp(X[m,0,0])$ which specializes to $\sum_i g_{i,K}\psi  (h_{i,K})$ for $K$ in $\CO[,M]$ and $\psi$ in $\cD_K$, namely the motivic exponential function $\sum_i g_i\cdot[X,0,h_i]$. This finishes the first step of the proofs.

In the second step, where we remove the residual parameterization, we proceed differently for the two theorems. For Theorem \ref{p2pexpmot}, let $h_{j,par}$ for $j=1,2,3$ be obtained using the first step, using a residual parameterization $\sigma$. These $h_{j,par}$  live in $\cCexp(X[0,s,0])$ where the $s$ residue field variables were introduced with $\sigma$.
We define the final $h_j$ by $\mu_X(h_j\overline h_j)$, where $\overline h_j$ is given by Lemma \ref{conjumot}, and where $\mu_X$ is as in (8.7.10) of \cite{CLexp} for the coordinate projection from $X[0,s,0]$ to $X$ (this is integration on the fibers which in our case means integration over the
$s$ residue variables). Similarly to the argument ending the proof of Theorem \ref{p2mot}, these $h_j$ are as desired by Theorem \ref{p2pexpmot} for $f$. For Theorem \ref{p2'pexpmot}, let $g_{par}$ be obtained using the first step with residual parameterization $\sigma$.
One defines $g$ as $\sigma^*(g_{par})$ and concludes as in the proof of Theorem \ref{p2'mot}.
\end{proof}

\begin{proof}[Proof of Theorems \ref{p1qmot} and \ref{p1qmotexp}]
The Specialization Principle \ref{SpecialP} yields that taking motivic integrals, for integrable functions,  commutes with specialization for $K$ and $\psi$ for all $K\in \CO[,M]$ and $\psi$ in $\cD_K$, for large enough $M$. Now the results follow from Theorems \ref{p2'mot} and \ref{p2'pexpmot}, in exactly the same way as Theorems \ref{p1p-new} and \ref{p1pexp} follow from Theorems \ref{p2'p} and \ref{p2'pexp}.
\end{proof}

Finally, we can can give the proof of our new transfer principles, which follows from the work we have done.

\begin{proof}[Proof of Theorems \ref{mtrel} and \ref{mtbound}]
For the first statement of Theorem \ref{mtrel}, resp.~of Theorem \ref{mtbound}, take $h_1$, resp. $h_2$, as given by Theorem \ref{p2pexpmot}. For the second statement of Theorem \ref{mtrel}, resp.~of Theorem \ref{mtbound}, take $h_1$, resp. $h_2$, as given by Corollary \ref{p2pmotlocexp}. In all cases the proof is finished by applying Proposition \ref{strong} to $h_1$, resp.~to $h_2$.
\end{proof}

Note that the basic form of the transfer principle for integrability as stated in the introduction follows from Theorem \ref{mtrel}. Indeed, in that basic form, the functions $F_{K,\psi}$ for $K$ in $\CO$ are a special case of functions that come from a motivic exponential function since $\LPas$ is richer than the language of valued fields that is used in the introduction. Moreover, the family of additive characters $x\mapsto \psi(y x)$ with nonzero parameter $y$ in the valued field and for $\psi\in\cD_K$ clearly allows one to deduce the basic form (which does not impose conditions on the conductor) from the results we have established. First use Theorem \ref{p2pexpmot}, for the family with nonzero $y$ as parameter in the character as we just described, to find $h_1$ in $\cCexp(h[1,0,0]$, extended by zero on zero. Use Theorem \ref{p2pexpmot} for this $h_1$ to find a new function $h_3$ in $\cCexp(h[0,0,0])$. Finally  apply Proposition \ref{strong} to $h_3$ to recover the form of the transfer principle as
stated in the introduction.

\bibliographystyle{amsplain}
\bibliography{tibib}

\end{document}